\newcounter{cprop}[section]
\newtheorem{theorem}[cprop]{Theorem}
\theoremstyle{plain}
\newtheorem{corollary}[cprop]{Corollary}
\newtheorem{lemma}[cprop]{Lemma}
\newtheorem{proposition}[cprop]{Proposition}
\numberwithin{equation}{section}
\theoremstyle{definition}
\newtheorem{definition}[cprop]{Definition}
\newtheorem{example}[cprop]{Example}
\newtheorem{assumption}[cprop]{Assumption}
\theoremstyle{remark}
\newtheorem{remark}[cprop]{Remark}
\newcommand{\E}{\mathbb{E}}
\newcommand{\diff}[1]{\,\mathrm{d}#1}
\newcommand{\R}{\mathbb{R}}
\newcommand{\N}{\mathbb{N}}
\newcommand{\ee}{\mathrm{e}}
\newcommand{\vertiii}[1]{{\left\vert\kern-0.25ex\left\vert\kern-0.25ex\left\vert #1 
    \right\vert\kern-0.25ex\right\vert\kern-0.25ex\right\vert}}
\begin{document}
\title{Semi-implicit Taylor schemes for stiff rough differential equations}

\author{Sebastian Riedel}
\address{Sebastian Riedel \\
Institut f\"ur Mathematik, Technische Universit\"at Berlin, Germany and Weierstra{\ss}-Institut, Berlin, Germany}
\email{riedel@math.tu-berlin.de}

\author{Yue Wu}
\address{Yue Wu \\
Mathematical Institute, University of Oxford, Oxford, UK and Alan Turning Institute, London, UK}
\email{yue.wu@maths.ox.ac.uk}

\renewcommand{\thefootnote}{\fnsymbol{footnote}} 
\footnotetext{2020 \emph{Mathematics Subject Classification.} 60G15, 60H10, 60L20, 60L70, 65C30, 65L04.}   
\renewcommand{\thefootnote}{\arabic{footnote}} 

%
\keywords{rough paths, semi-implicit Taylor schemes, stiff systems, stochastic differential equations}

 \begin{abstract}
   We study a class of semi-implicit Taylor-type numerical methods that are easy to implement and designed to solve multidimensional stochastic differential equations driven by a general rough noise, e.g. a fractional Brownian motion. In the multiplicative noise case, the equation is understood as a rough differential equation in the sense of T.~Lyons. We focus on equations for which the drift coefficient may be unbounded and satisfies a one-sided Lipschitz condition only. We prove well-posedness of the methods, provide a full analysis, and deduce their convergence rate. Numerical experiments show that our schemes are particularly useful in the case of stiff rough stochastic differential equations driven by a fractional Brownian motion.
 \end{abstract}

\maketitle

\section{Introduction}

Stiff differential equations, frequently encountered in practice, pose a challenging problem for the numerical simulation, both for deterministic and for stochastic systems. For stiff ordinary differential equations, it is well known that implicit methods typically perform better than explicit ones \cite{HW96} and they are usually also the method of choice for stiff stochastic differential equations \cite{KP92} (although one should not blindly follow this rule, cf. \cite{LAE08}). A typical stiff equation possesses one or more coefficients that are unbounded, e.g. linear, with linear growth or even satisfy one-sided growth conditions. In this paper, we will concentrate on drift coefficients which satisfy a one-sided Lipschitz condition.

Stochastic differential equations are usually modeled with white noise, justified by its universality property. However, many recent works indicate that processes with fractional noise might be more appropriate for certain models. For instance, fractional noise has been successfully used in mathematical finance \cite{Gua06,CS17,GJR18,BFG16,EER19} and in models for electricity markets \cite{Ben17}. From a mathematical point of view, the analysis of these equations is usually more challenging due to the memory in the model and, consequently, the lack of the Markov property. In the multiplicative noise case, it is even not clear how to properly define the equations since It\=o's theory of stochastic integration does not apply for non-semimartingales such as the fractional Brownian motion. However, this issue can be overcome using Lyons' theory of rough paths \cite{Lyo98} which provides a deterministic theory powerful enough to deal with such equations.

Our aim in the present paper is to define and study numerical schemes that are suitable to solve stiff stochastic differential equations driven by a general rough noise. Inspired by \cite{KPS91,KP92,Hig00}, we will concentrate on methods where the implicit parameter appears in the drift component only, usually called \emph{semi-implicit methods}. They are conceptually easier than fully-implicit methods and are known to perform well for stiff stochastic differential equations with additive noise and when the noise parameter is not too large. The most studied numerical schemes in the context of rough differential equations are Taylor-type schemes \cite{Dav07,FV10,DNT12,BFRS16} (see, however, the recent works \cite{HHW18,RR20} for an approach to Runge-Kutta methods), and we will study such methods in the present work, too. On the technical level, the biggest challenge is the presence of an unbounded drift in the equation which satisfies a one-sided Lipschitz condition only. In the case of additive noise, we can use a direct calculation to show that the numerical scheme remains bounded under this condition, cf. Section \ref{sec:add}. Interestingly, a one-sided growth assumption (which follows by the one-sided Lipschitz property) is not sufficient to guarantee this, cf. \cite[page 43]{CHJ13} for a counterexample. In the multiplicative noise case, things are getting much more complicated. In the work \cite{RS17}, the authors can show that a rough differential equation with an unbounded drift has a global solution provided that the drift satisfies a further growth assumption in the normal directions, cf. \eqref{eqn:normal_growth}. Our strategy in the present article is to impose that the continuous equation has a global solution and to derive under this assumption the boundedness and convergence of the numerical scheme. The advantage of this approach is that our results can be applied to any continuous equation, regardless of the precise assumptions on the vector fields, as long as the global solution exists. 

The paper is structured as follows. In Section \ref{sec:add}, we study the case of an ordinary differential equation perturbed by additive noise. The implicit Euler scheme is defined in \eqref{eqn:RN_y}. We show the convergence of the numerical solution to the true solution with a precise rate in Theorem \ref{thm:RDE_y}. In Section \ref{sec:semiflow}, we study equations driven by multiplicative noise interpreted as rough differential equations. We define semi-implicit Euler \eqref{eqn:implicit_Euler}, Milstein \eqref{eqn:implicit_Milstein} and 3rd-order Milstein schemes \eqref{eqn:implicit_Milstein_order3} and analyze their convergence rates in Theorem \ref{thm:conv_rates}. Inspired by \cite{DNT12}, we also propose \emph{simplified} versions of the respective schemes, i.e. we replace the iterated (stochastic) integrals by a product of increments. These schemes are much easier to implement in practice. The convergence rate for these schemes are studied in Theorem \ref{thm:rates_Gaussian_rough} for a driver being a general Gaussian process. In Section~\ref{sec:numexp}, we illustrate our theoretical results through several numerical experiments for equations driven by a fractional Brownian motion. For both additive and multiplicative noise, the divergence of the forward Euler scheme with coarser step size is also discussed to illustrate the drawback of the forward schemes, while the semi-implicit schemes always return reliable simulations regardless of stepsize. This observation is somehow crucial to applications in the real world which may require less computational costs.

\section{Preliminaries and notation}\label{sec: pre}
This section introduces basic notations and useful mathematical results for both Section \ref{sec:add} and \ref{sec:semiflow}. Notations which are used exclusively in Section \ref{sec:semiflow} will be postponed to  the beginning of Section \ref{sec:semiflow}.

Let $T \in (0,\infty)$. Let $(\R^d, |\cdot|)$ be the Euclidean space equipped
with the Euclidean distance. By $\langle\cdot,\cdot\rangle$ we denote the
scalar product in $\R^d$.

First, let us recall the notion of $\alpha$-H\"older and
$p$-variation regularity from \cite[Chapter 5]{FV10}: 

\begin{definition}
  \label{def:p-varholder}
   A path $x \colon [0,T]\to \R^d$ is said to be \\ 
  \textbf{a}. \emph{$\alpha$-H\"older continuous} with $\alpha \in (0,1]$ if
  \begin{equation}
    \label{eqn:p-holder}
    \|x\|_{\alpha;[0,T]}=\sup_{[s,t]\subset
    [0,T]}\frac{|x(t)-x(s)|}{|t-s|^{\alpha}}<\infty; 
  \end{equation}
  \textbf{b}. of {\emph{finite $p$-variation}} for some $p \in [1, \infty)$ if
  \begin{equation}
    \label{eqn:p-var}
    \|x\|_{p\text{-var};[0,T]} = \sup_{(t_i)\subset [0,T]}\Big(\sum_i
    |x({t_{i+1}})-x({t_{i}})|^p\Big)^{\frac{1}{p}}<\infty, 
  \end{equation}
  where the supremum is taken over all finite partitions of the interval
  $[0,T]$. 
\end{definition}

The notation $C^{\alpha\text{-H\"ol}}([0,T],\R^d)$ is used for the set of
$\alpha$-H\"older paths $x$, which can be shown to be a Banach space with norm
$x\mapsto |x(0)|+\|x\|_{\alpha;[0,T]}$. The notation
$C^{p\text{-var}}([0,T],\R^d)$ will be used for the set of continuous $x \colon
[0,T]\to \R^d$ of finite $p$-variation. Indeed, $C^{p\text{-var}}([0,T],\R^d)$
can be shown to be a Banach space with norm $x\mapsto
|x(0)|+\|x\|_{p\text{-var};[0,T]}$. 

\begin{remark}
  Note that $C^{\alpha\text{-H\"ol}}([0,T],\R^d)$ and
  $C^{p\text{-var}}([0,T],\R^d)$ are subsets of $C([0,T],\R^d)$, the collection
  of all continuous path $x$, with norm $\|x\|_{\infty;[0,T]}:=\sup_{t\in
  [0,T]}|x(t)|$. It is not hard to deduce that 
  \begin{align}
    \label{eqn:holder-infinity}
    \|x\|_{\infty;[0,T]}\leq |x(0)|+\|x\|_{\alpha;[0,T]}T^\alpha
  \end{align}
  if $x\in C^{\alpha\text{-H\"ol}}([0,T],\R^d)$ and
  \begin{align}
    \label{eqn:p-infinity}
    \|x\|_{\infty;[0,T]}\leq |x(0)|+\|x\|_{p\text{-var};[0,T]}
  \end{align}
  if $x\in C^{p\text{-var}}([0,T],\R^d)$.
  
  Furthermore, one easily verifies that $C^{\alpha\text{-H\"ol}}([0,T],\R^d)
  \subset C^{p\text{-var}}([0,T],\R^d)$ if $\alpha \ge \frac{1}{p}$ as well as 
  $C^{p\text{-var}}([0,T],\R^d) \subset C^{p'\text{-var}}([0,T],\R^d)$ if $1
  \le p \le p' < \infty$. A more detailed discussion can be found in
  \cite[Chapter 5]{FV10}. 
\end{remark}

In the discussion of $p$-variation regularity, the concept of a {\emph{control
function}} is very useful: 

\begin{definition}
  \label{def:control}
  Let $\Delta_T:=\{ (s,t) \in [0,T] \times [0,T] \, : \,
  0\leq s\leq t\leq T\}$ denote the simplex.  A continuous 
  non-negative function $\omega \colon \Delta_T \to [0,\infty)$ with
  $\omega(t,t)=0$ for all $t\in [0,T]$ is called a \emph{control function}
  if it is \emph{superadditive}, i.e., 
  $$\omega(s,t)+\omega(t,u)\leq \omega(s,u) \ \ \ \mbox{for\ all\ } s\leq t
  \leq u\ \ \  \mbox{in\ } [0,T].$$
  If there exists a positive constant $C$ such that
  \begin{align}
    \label{eqn:control}
    |x(t)-x(s)|^p\leq C\omega(s,t) \ \ \mbox{for\ every\ } s\leq t,
  \end{align}
  we say that \emph{$\omega$ controls the $p$-variation of $x$}. 
\end{definition}

Note that \eqref{eqn:control} immediately implies that
$\|x\|_{p\text{-var};[0,T]} \leq \omega(0,T)^{\frac{1}{p}}$. On the other hand,
for $x\in C^{p\text{-var}}([0,T],\R^d)$, the function
$\omega(s,t)=\|x\|^p_{p\text{-var};[s,t]}$ is a control function which controls
the $p$-variation of $x$ (\cite[Proposition 5.8]{FV10}). 

Let us finally recall two versions of the Gronwall inequality \cite{G1919}.
The first version states the Gronwall inequality in a differential form.
For a proof we refer, for instance, to \cite[Lemma~7.3.2]{emmrich2004}.

\begin{lemma}[Differential version of Gronwall's inequality]
  \label{lem:Gronwall}
  Assume that $a \colon [0,T] \to \R$ is absolutely continuous and $g,
  \lambda \colon [0,T] \to \R$ are integrable, i.e.,
  $g, \lambda \in L^1([0,T],\R)$. If it holds
  $$ \dot{a}(t) \leq g(t)+\lambda(t)a(t), \ \ \mbox{for almost every }t\in
  [0,T].$$
  Then, it follows that
  \begin{align*}
    a(t) \leq \ee^{\Lambda(t)} a(0) + 
    \int_{0}^{t} \ee^{\Lambda(t)-\Lambda(s)} g(s)\diff{s}, \quad \text{ for all
    } t \in [0,T],
  \end{align*}
  where $\Lambda(t) := \int_{0}^t\lambda(s)\diff{s}$.
\end{lemma}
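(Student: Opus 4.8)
The plan is to use the classical integrating-factor argument, adapted to the absolutely continuous setting. First I would recall that $\Lambda(t) = \int_0^t \lambda(s)\diff{s}$ is absolutely continuous on $[0,T]$ with $\dot\Lambda = \lambda$ almost everywhere and $\Lambda(0)=0$, and then introduce the auxiliary function $b(t) := \ee^{-\Lambda(t)} a(t)$. The first task is to check that $b$ is again absolutely continuous: the map $t \mapsto \ee^{-\Lambda(t)}$ is the composition of $\exp$, which is Lipschitz on the bounded range of $-\Lambda$, with the absolutely continuous function $-\Lambda$, hence absolutely continuous; since $a$ is absolutely continuous and bounded on the compact interval $[0,T]$, the product $b$ of two such functions is absolutely continuous.

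Next I would differentiate $b$ almost everywhere via the product rule (valid a.e. for products of absolutely continuous functions on a compact interval), obtaining
$$\dot b(t) = \ee^{-\Lambda(t)}\big(\dot a(t) - \lambda(t)\,a(t)\big) \quad \text{for a.e. } t \in [0,T].$$
Plugging in the hypothesis $\dot a(t) \le g(t) + \lambda(t)\,a(t)$ and using $\ee^{-\Lambda(t)} > 0$ gives $\dot b(t) \le \ee^{-\Lambda(t)} g(t)$ for almost every $t$. The right-hand side is integrable because $\ee^{-\Lambda}$ is continuous, hence bounded, on $[0,T]$, while $g \in L^1([0,T],\R)$, so the subsequent integration is justified.

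Finally I would apply the fundamental theorem of calculus for absolutely continuous functions to $b$: for every $t \in [0,T]$,
$$b(t) - b(0) = \int_0^t \dot b(s)\diff{s} \le \int_0^t \ee^{-\Lambda(s)} g(s)\diff{s}.$$
Since $\Lambda(0)=0$ we have $b(0) = a(0)$, hence $\ee^{-\Lambda(t)} a(t) \le a(0) + \int_0^t \ee^{-\Lambda(s)} g(s)\diff{s}$. Multiplying both sides by $\ee^{\Lambda(t)} > 0$ and absorbing this factor into the integral yields the asserted estimate. The only genuinely delicate points are the measure-theoretic ones in the first two steps — that the integrating factor and the product $b$ are absolutely continuous and that the product rule holds almost everywhere — and everything after that is a routine computation.
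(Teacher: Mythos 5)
Your argument is correct: the integrating-factor construction $b(t)=\ee^{-\Lambda(t)}a(t)$, the verification that $b$ is absolutely continuous, the a.e.\ product rule, and the fundamental theorem of calculus for absolutely continuous functions together give exactly the stated bound, and you have correctly identified and handled the only delicate measure-theoretic points. The paper itself does not prove this lemma but defers to the reference \cite[Lemma~7.3.2]{emmrich2004}, whose proof is this same standard argument, so there is nothing further to compare.
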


In addition, we also rely on the discrete version of the Gronwall inequality.
A proof of this can be found in \cite{clark1987}. For its formulation
we use the convention that a sum over an empty index set is equal to zero.

\begin{lemma}[Discrete version of Gronwall's inequality]
  \label{lem:Gronwall-D} 
  Let $(u_n)_{n\in \N}$ and $(b_n)_{n\in \N}$ be two nonnegative sequences
  which satisfy, for given $a\in [0,\infty)$ and $N \in \N$, that
  \begin{align*}
    u_n \leq a + \sum_{i=1}^{n-1} b_i u_i, 
    \quad n \in \{1,\dots, N \}.
  \end{align*}	
  Then, it follows that
  \begin{align*}
    u_n \leq a \exp \Big( \sum_{i=1}^{n-1} b_i \Big), 
    \quad n \in \{1,\dots, N \}.
  \end{align*}
\end{lemma}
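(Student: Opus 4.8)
The plan is to prove the slightly sharper estimate $u_n \le a\prod_{i=1}^{n-1}(1+b_i)$ first, and then deduce the stated bound from the elementary inequality $1+x \le \ee^x$ applied to each factor (valid since $b_i \ge 0$, though in fact it holds for all real $x$). The product formulation is the natural intermediate step because it turns the recursive structure of the hypothesis into a genuinely multiplicative one.

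To establish the product bound, I would introduce the partial sums
\begin{align*}
  S_n := a + \sum_{i=1}^{n-1} b_i u_i, \qquad n \in \{1,\dots,N\},
\end{align*}
so that the hypothesis reads simply $u_n \le S_n$ and, with the empty-sum convention, $S_1 = a$. The key observation is that $S_{n+1} = S_n + b_n u_n \le S_n + b_n S_n = (1+b_n) S_n$, where the inequality uses the hypothesis $u_n \le S_n$ together with $b_n \ge 0$. A straightforward induction on $n$ (base case $S_1 = a$, which equals $a$ times the empty product) then gives $S_n \le a\prod_{i=1}^{n-1}(1+b_i)$, and since $u_n \le S_n$ we obtain
\begin{align*}
  u_n \le a\prod_{i=1}^{n-1}(1+b_i) \le a\prod_{i=1}^{n-1}\ee^{b_i} = a\exp\Big(\sum_{i=1}^{n-1} b_i\Big),
\end{align*}
which is the claim.

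I do not expect any genuine obstacle here: the argument is a short induction, and the only points requiring a little care are the bookkeeping with the convention that empty sums and the empty product equal $0$ and $1$ respectively — in particular in the base case $n=1$ — and noting that nonnegativity of the $b_i$ is used precisely once, to pass from $u_n \le S_n$ to $b_n u_n \le b_n S_n$. An equivalent route, if one prefers to avoid partial sums, is to prove $u_n \le a\prod_{i=1}^{n-1}(1+b_i)$ directly by strong induction, substituting the inductive hypothesis into $u_n \le a + \sum_{i=1}^{n-1} b_i u_i$ and collapsing the resulting sum by the telescoping identity $b_i\prod_{j=1}^{i-1}(1+b_j) = \prod_{j=1}^{i}(1+b_j) - \prod_{j=1}^{i-1}(1+b_j)$; the partial-sum version merely packages this telescoping more transparently.
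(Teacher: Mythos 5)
Your argument is correct: the induction on the partial sums $S_n$ is sound, the base case $S_1=a$ is handled properly with the empty-sum/empty-product conventions, and the passage from the product bound to the exponential bound via $1+x\le \ee^x$ is valid. The paper itself does not prove this lemma but only cites the literature; your proof is the standard one and in fact yields the slightly sharper estimate $u_n \le a\prod_{i=1}^{n-1}(1+b_i)$ along the way.
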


Throughout this paper, the drift term $b \colon \R^d \to \R^d$ in the equation considered is spatially dependent only and assumed to satisfy the following assumption:

\begin{assumption}
  \label{as:drift}
    The vector field $b \colon \R^d \to \R^d$ is continuous and satisfies
    a \emph{one-sided Lipschitz condition}, i.e. there exists a 
    constant $C_b \in [0,\infty)$ such that 
    \begin{equation}
      \label{eqn:onelip}
      \left\langle b(\varsigma)-b(\zeta), \varsigma-\zeta \right\rangle
      \leq C_b|\varsigma-\zeta|^2\ \ \  \forall \varsigma,\zeta\in\R^d.
    \end{equation}
\end{assumption}

\begin{assumption}
  \label{as:drift2}
  The vector field $b \colon \R^d \to \R^d$ is \emph{locally Lipschitz
  continuous}. To be more precise, there exists a non-decreasing function
  \mbox{$\Upsilon_b \colon [0,\infty)\to [0,\infty)$} such that for every 
  $r \geq 0$, 
  \begin{equation}
    \label{as:LocalLipBounded}
    |b(\varsigma)-b(\zeta)| \leq \Upsilon_b(r)|\varsigma-\zeta|\ \ \
    \forall \varsigma,\zeta \in \overline{B_r(0)},
  \end{equation}
  where $\overline{B_r(0)}$ denotes the closed ball in $\R^d$ centered at
  $0$ with radius $r$. 
\end{assumption}

\section{Additive noise}\label{sec:add}

In this section, we consider rough differential equations of the form 
\begin{align}\label{eqn:add_noise}
  \begin{split}
    \begin{cases}
      \diff{y}(t) = b(y(t))\diff{t} + \diff x(t), \quad  t \in (0,T],\\
      y(0) = \xi,
    \end{cases}
  \end{split}
\end{align}
where $\xi \in \R^d$ is the initial condition, $b \colon \R^d \to \R^d$ is a
vector field and $x \colon [0,T] \to \R^d$ is a given path. The equation is understood as an integral equation, i.e. $y$ is the solution to \eqref{eqn:add_noise} if and only if
\begin{align}\label{eqn:add_sol}
     {y}(t) =  \xi+\int_0^t b(y(s))\diff{s} + x(t)-x(0), \quad  t \in (0,T].
\end{align}

\subsection{Global existence and uniqueness}
To solve \eqref{eqn:add_noise}, we will use the following transformation:
For $t \in [0,T]$ and $\zeta \in \R^d$ set   
\begin{equation}
  \label{eqn:transformation}
  f(t,\zeta):=b\big(\zeta+x(t)\big).
\end{equation}
Then we see that, formally, the solution $y$ to \eqref{eqn:add_noise} is given
by $y(t) = z(t) + x(t)$, where $z$ is a solution to the initial value problem
\begin{align}
  \label{eqn:ODE_transform}
  \begin{cases}
    \dot{z}(t) = f(t,z(t)), \quad t \in (0,T],\\
    z(0) = \xi.
  \end{cases}
\end{align}
Therefore, the problem of solving \eqref{eqn:add_noise} reduces to solving
\eqref{eqn:ODE_transform}. 

In the following lemma, we collect some properties of the non-homogeneous
vector field $f$.

%

\begin{lemma}
  \label{lm:fproperties} 
  Let $x \colon [0,T] \to \R^d$ be bounded. 
  Consider the mapping $f$ defined by \eqref{eqn:transformation}.
  \begin{itemize}
    \item[(i)] If Assumption~\ref{as:drift} is satisfied then
      $f$ is one-sided Lipschitz continuous in space, uniformly in
      time, i.e.,  
      \begin{equation}
        \label{as:Lipf}
        \left\langle f(t,\varsigma)-f(t,\zeta),\varsigma-\zeta\right\rangle
        \leq C_b|\varsigma-\zeta|^2\ \ \  \forall \varsigma,\zeta\in\R^d\
        \mbox{and\ } t\in [0,T]. 
      \end{equation}
    \item[(ii)] If Assumption~\ref{as:drift2} is satisfied then for every $r
      \in (0,\infty)$, $\varsigma,\zeta \in \overline{B_r(0)}$ and $t\in
      [0,T]$, it holds
      \begin{equation}
        \label{as:LocalLipf}
        |f(t,\varsigma)-f(t,\zeta)|
        \leq \Upsilon_b\big(\rho_r(t)\big) |\varsigma-\zeta|,
      \end{equation}
      where $\rho_r(t) = r + \| x \|_{\infty;[0,t]}$ is a non-decreasing
      function.  
  \end{itemize}
\end{lemma}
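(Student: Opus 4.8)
The plan is to observe that both statements follow by a trivial change of variables, translating by the (bounded) path value $x(t)$, together with the boundedness assumption on $x$; there is essentially no obstacle beyond bookkeeping.

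For part (i), I would fix $t \in [0,T]$ and $\varsigma, \zeta \in \R^d$ and simply unwind the definition \eqref{eqn:transformation}: setting $\varsigma' := \varsigma + x(t)$ and $\zeta' := \zeta + x(t)$, one has $\varsigma' - \zeta' = \varsigma - \zeta$, so
\begin{align*}
  \left\langle f(t,\varsigma) - f(t,\zeta), \varsigma - \zeta \right\rangle
  &= \left\langle b(\varsigma') - b(\zeta'), \varsigma' - \zeta' \right\rangle
  \leq C_b |\varsigma' - \zeta'|^2 = C_b |\varsigma - \zeta|^2,
\end{align*}
where the inequality is exactly Assumption~\ref{as:drift} applied to $\varsigma', \zeta'$. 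Since the constant $C_b$ does not depend on $t$, this gives \eqref{as:Lipf}.

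For part (ii), I would again translate by $x(t)$, but now I must first check that the translated points land in a ball whose radius I can control. Fix $r \in (0,\infty)$, $t \in [0,T]$ and $\varsigma, \zeta \in \overline{B_r(0)}$. Then $|\varsigma + x(t)| \leq r + \|x\|_{\infty;[0,t]} = \rho_r(t)$ and similarly for $\zeta$, so $\varsigma + x(t), \zeta + x(t) \in \overline{B_{\rho_r(t)}(0)}$ (here boundedness of $x$ guarantees $\rho_r(t) < \infty$). Applying Assumption~\ref{as:drift2} with radius $\rho_r(t)$ yields
\begin{align*}
  |f(t,\varsigma) - f(t,\zeta)| = |b(\varsigma + x(t)) - b(\zeta + x(t))|
  \leq \Upsilon_b(\rho_r(t)) |(\varsigma + x(t)) - (\zeta + x(t))| = \Upsilon_b(\rho_r(t)) |\varsigma - \zeta|,
\end{align*}
which is \eqref{as:LocalLipf}. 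Finally, to see that $\rho_r$ is non-decreasing, note that $t \mapsto \|x\|_{\infty;[0,t]}$ is non-decreasing because the supremum is taken over an increasing family of intervals, and adding the constant $r$ preserves monotonicity. The only point requiring any care is the elementary estimate $|\varsigma + x(t)| \le r + \|x\|_{\infty;[0,t]}$ and remembering to invoke boundedness of $x$ so that $\Upsilon_b$ is evaluated at a finite argument; everything else is immediate.
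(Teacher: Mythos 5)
Your proof is correct and is exactly the intended argument: the paper itself dismisses this lemma with the single word ``Straightforward,'' and your translation-by-$x(t)$ computation, together with the observation that the shifted points lie in $\overline{B_{\rho_r(t)}(0)}$, is precisely the bookkeeping being elided. Nothing further is needed.
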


\begin{proof}
Straightforward.
\end{proof}


\begin{theorem}
  \label{thm:RDE}
  Let $x \colon [0,T] \to \R^d$ be continuous and assume that
  Assumptions~\ref{as:drift} and \ref{as:drift2} are satisfied. Then there
  exists  a unique, global solution $z$ to \eqref{eqn:ODE_transform} with
  \begin{align*}
    \|z\|_{\infty; [0,T]} \leq \ee^{C_b T}
    |\xi|+\int_0^T \ee^{C_b(T-s)}\big|b(x(s))\big| \diff{s} =: r_z.
  \end{align*}
  Moreover, $z$ is Lipschitz continuous on $[0,T]$ with Lipschitz constant
  bounded by  
  \begin{align*}
    \| z \|_{1;[0,T]} \le 
    r_z \Upsilon_b \big(r_z + \|x\|_{\infty;[0,T]}\big) 
    + \|b \circ x \|_{\infty;[0,T]}.
  \end{align*}
\end{theorem}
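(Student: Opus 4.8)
The plan is to invoke the classical Cauchy--Lipschitz theory to obtain a unique maximal solution --- which is legitimate thanks to Lemma~\ref{lm:fproperties}(ii) --- and then to close the argument with an a priori bound extracted from the one-sided Lipschitz condition \eqref{eqn:onelip} via the differential form of Gronwall's inequality. Concretely: since $x$ is continuous on the compact interval $[0,T]$ it is bounded, so Lemma~\ref{lm:fproperties} applies. The right-hand side $f$ of \eqref{eqn:ODE_transform} is jointly continuous, being the composition of $(t,\zeta)\mapsto\zeta+x(t)$ with the continuous map $b$, and by Lemma~\ref{lm:fproperties}(ii) together with the monotonicity of $\rho_r$, for each $R>0$ its restriction to $[0,T]\times\overline{B_R(0)}$ is Lipschitz in the spatial variable, uniformly in time, with constant $\Upsilon_b\big(R+\|x\|_{\infty;[0,T]}\big)$. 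Standard ODE theory for a continuous right-hand side that is locally Lipschitz in space therefore provides a unique non-continuable solution $z$ on a maximal interval $[0,\tau)$ with $\tau\in(0,T]$, enjoying the blow-up alternative: if $\tau<T$, then $\limsup_{t\uparrow\tau}|z(t)|=\infty$.

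For the a priori bound, let $z$ be any solution of \eqref{eqn:ODE_transform} on $[0,\tau)$ and set $m(t):=|z(t)|$; to avoid the zero set of $z$ one may equally well work with $m_\varepsilon(t):=\sqrt{|z(t)|^2+\varepsilon}$ and let $\varepsilon\downarrow0$ at the end. Using $\dot z(t)=f(t,z(t))$, splitting $f(t,z(t))=\big(f(t,z(t))-f(t,0)\big)+f(t,0)$, applying \eqref{as:Lipf} with $\zeta=0$ to the first term and Cauchy--Schwarz to the second, and noting $f(t,0)=b(x(t))$, one obtains for a.e.\ $t$
\begin{equation*}
  \dot m(t)\le\frac{\langle z(t),\,f(t,z(t))-f(t,0)\rangle}{|z(t)|}+|f(t,0)|\le C_b\,m(t)+|b(x(t))|.
\end{equation*}
Since $b\circ x$ is continuous, hence integrable, the differential Gronwall inequality (Lemma~\ref{lem:Gronwall}) with $\lambda\equiv C_b$ and $g=|b\circ x|$ gives, using $C_b\ge0$ and $t\le T$,
\begin{equation*}
  |z(t)|=m(t)\le\ee^{C_b t}|\xi|+\int_0^t\ee^{C_b(t-s)}|b(x(s))|\diff{s}\le r_z\qquad\text{for all }t\in[0,\tau).
\end{equation*}

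This bound keeps $z$ inside the fixed ball $\overline{B_{r_z}(0)}$, so the blow-up alternative forces $\tau=T$; since then $|\dot z(t)|=|f(t,z(t))|$ is bounded by the maximum of the continuous function $f$ over the compact set $[0,T]\times\overline{B_{r_z}(0)}$, $z$ is Lipschitz on $[0,T)$ and extends to a solution on the closed interval $[0,T]$, while uniqueness on $[0,T]$ is inherited from the local theory (equivalently, since any two solutions take values in $\overline{B_{r_z}(0)}$ where $f$ is spatially Lipschitz, their difference satisfies a linear Gronwall inequality and must vanish). Finally, for the Lipschitz estimate one notes $|z(t)+x(t)|\le r_z+\|x\|_{\infty;[0,T]}$ and $|x(t)|\le\|x\|_{\infty;[0,T]}$, so by \eqref{as:LocalLipBounded}
\begin{equation*}
  |\dot z(t)|=|b(z(t)+x(t))|\le|b(z(t)+x(t))-b(x(t))|+|b(x(t))|\le r_z\,\Upsilon_b\big(r_z+\|x\|_{\infty;[0,T]}\big)+\|b\circ x\|_{\infty;[0,T]},
\end{equation*}
and $\|z\|_{1;[0,T]}=\sup_{t\in[0,T]}|\dot z(t)|$ is bounded by the right-hand side. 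The step I expect to be the main obstacle is the a priori bound: \eqref{eqn:onelip} only controls $\langle b(\varsigma)-b(\zeta),\varsigma-\zeta\rangle$, so extracting the \emph{sharp} constant $r_z$ requires differentiating $t\mapsto|z(t)|$ with care at its zeros and feeding the resulting linear differential inequality into the right version of Gronwall's lemma; everything else is routine.
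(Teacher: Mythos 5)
Your proposal is correct and follows exactly the route the paper intends: the paper's own proof is only a sketch (local existence from local Lipschitz continuity of $f$, norm bounds "in a standard way" from the differential Gronwall lemma, with references to \cite{hale1980} and \cite{EK17}), and your argument — maximal solution plus blow-up alternative, the a priori bound on $|z(t)|$ via the one-sided Lipschitz condition and Lemma~\ref{lem:Gronwall} (with the $\sqrt{|z|^2+\varepsilon}$ regularization to handle zeros of $z$), and the Lipschitz bound from splitting $b(z+x)=\bigl(b(z+x)-b(x)\bigr)+b(x)$ — fills in precisely those standard details. No gaps.
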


\begin{proof}
  The existence of a unique local solution to \eqref{eqn:ODE_transform} 
  follows directly form the local Lipschitz continuity of $f$. For instance, we
  refer to \cite{hale1980}. The norm estimates are also derived from the
  Gronwall inequality Lemma~\ref{lem:Gronwall} in a standard
  way. For further details we refer to \cite[Section 3]{EK17}. 
\end{proof}

Due to $y = z + x$ we also immediately
obtain the existence of a unique solution to \eqref{eqn:add_noise}. The result
summarizes some additional properties which also follow from
Theorem~\ref{thm:RDE}.

\begin{corollary}
  \label{cor:RDE}
  Let $x \colon [0,T] \to \R^d$ be continuous and assume that
  Assumptions~\ref{as:drift} and \ref{as:drift2} are satisfied.
  Then there exists a unique, global and 
  continuous solution $y$ to \eqref{eqn:add_noise}.

  In addition, let $\omega \colon \Delta_T \to [0,\infty)$ be 
  a control function for the $p$-variation of $x$, $p \in [1,\infty)$. Then the
  $p$-variation of $y$ is also controlled by a control function $\hat{\omega}$
  which is given by 
  \begin{align*}
    \hat{\omega}(s,t) = \|y\|_{p\mathrm{-var};[s,t]}^p \le 2^{p-1}
    \| z \|_{1;[0,T]}^p |t-s|^p + 2^{p-1} \omega(s,t), \quad (s,t)
    \in \Delta_T.
  \end{align*}
  If $x$ is $\frac{1}{p}$-H\"older continuous, then also $y$ is
  $\frac{1}{p}$-H\"older continuous with H\"older constant bounded by
  \begin{align*}
    \|y\|_{\frac{1}{p};[0,T]} \le \|x\|_{\frac{1}{p};[0,T]} +
    \| z \|_{1;[0,T]} T^{1 - \frac{1}{p}}. 
  \end{align*}
\end{corollary}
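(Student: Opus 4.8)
The plan is to reduce everything to Theorem~\ref{thm:RDE} via the substitution $y = z + x$ and then to transfer the regularity of $z$ and of $x$ to $y$ through elementary estimates, so that no new machinery is needed.

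First I would settle existence, uniqueness and continuity. Theorem~\ref{thm:RDE} already provides, under Assumptions~\ref{as:drift} and~\ref{as:drift2}, a unique global solution $z$ of the transformed problem \eqref{eqn:ODE_transform} which is moreover Lipschitz on $[0,T]$. Setting $y := z + x$ and unwinding the definition \eqref{eqn:transformation} of $f$ shows that $y$ solves \eqref{eqn:add_sol}; conversely, if $y$ solves \eqref{eqn:add_sol} then $z := y - x$ solves \eqref{eqn:ODE_transform}, so uniqueness for \eqref{eqn:ODE_transform} transfers to uniqueness for \eqref{eqn:add_noise}. Continuity of $y$ is then immediate, since $z$ is Lipschitz and $x$ is continuous by hypothesis.

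Next I would prove the $p$-variation bound. For $(s,t) \in \Delta_T$ I would decompose $y(t)-y(s) = (z(t)-z(s)) + (x(t)-x(s))$ and apply the elementary inequality $(a+b)^p \le 2^{p-1}(a^p+b^p)$ (valid for $a,b \ge 0$ and $p \ge 1$), the Lipschitz bound $|z(t)-z(s)| \le \|z\|_{1;[0,T]}|t-s|$ coming from Theorem~\ref{thm:RDE}, and the control property $|x(t)-x(s)|^p \le \omega(s,t)$, to obtain the pointwise estimate $|y(t)-y(s)|^p \le \hat\omega(s,t)$. It then remains to check that $\hat\omega$ is a control function in the sense of Definition~\ref{def:control}: continuity, non-negativity and vanishing on the diagonal are clear, while superadditivity follows by combining the superadditivity of $\omega$ with that of $(s,t) \mapsto |t-s|^p$, the latter being a consequence of $a^p + b^p \le (a+b)^p$ for $a,b \ge 0$ and $p \ge 1$ applied to $a = u-s$, $b = t-u$ with $s \le u \le t$. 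By the pointwise estimate, $\hat\omega$ then controls the $p$-variation of $y$, as claimed.

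Finally, for the H\"older statement, assuming $x$ is $\tfrac1p$-H\"older I would use $1 - \tfrac1p \ge 0$ to write $|z(t)-z(s)| \le \|z\|_{1;[0,T]}|t-s| \le \|z\|_{1;[0,T]}\,T^{1-\frac1p}|t-s|^{\frac1p}$, add the bound $|x(t)-x(s)| \le \|x\|_{\frac1p;[0,T]}|t-s|^{\frac1p}$, and conclude with the triangle inequality. As for the difficulty: there is essentially none here, since the whole corollary rests on Theorem~\ref{thm:RDE} together with the two elementary inequalities for $u \mapsto u^p$; the only point demanding a moment's care is the verification that the candidate $\hat\omega$ is genuinely superadditive, which is exactly where the assumption $p \ge 1$ enters.
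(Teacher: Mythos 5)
Your proposal is correct and follows exactly the route the paper intends: the paper gives no separate proof of this corollary, stating only that everything follows from Theorem~\ref{thm:RDE} via $y = z + x$, and your argument simply fills in the elementary details (the inequality $(a+b)^p \le 2^{p-1}(a^p+b^p)$, the superadditivity of $(s,t)\mapsto |t-s|^p$ for $p\ge 1$, and the triangle inequality for the H\"older bound).
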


\begin{remark}
  \label{rmk:explosion}
  The above theorem shows that the global, one-sided Lipschitz condition
  \eqref{eqn:onelip} implies non-explosion of the solution in the additive
  noise case. At first sight, this seems not very surprising since
  \eqref{eqn:onelip} clearly implies the one-sided growth condition 
  \begin{align}
    \label{eqn:one-sided_growth}
    \langle b(\zeta), \zeta \rangle \leq C_1 + C_2|\zeta|^2 \quad \forall \zeta
    \in \R^d 
  \end{align}
  which is known to prevent explosion in the case of classical ODEs, i.e., $x
  \equiv 0$, or in the case of stochastic differential equation driven by a 
  Brownian motion. However, if the equation is driven by a general path, 
  non-explosion can \emph{not} be deduced by simply assuming 
  \eqref{eqn:one-sided_growth}. We refer to \cite[p.~43]{CHJ13} for a
  counterexample. 
\end{remark}

\subsection{Discrete approximations and error analysis}

Let us fix an equidistant partition $\mathcal{T}^h$ of $[0,T]$ of the form 
\begin{equation}
  \label{eqn:time_grid}
  \mathcal{T}^h =\{t_0=0<t_1<\dots <t_j <\dots<t_{N_h}=T\}\ \ \mbox{with\
  } t_j=jh. 
\end{equation}
Hereby, the step size $h \in (0,T]$ is determined by $h = \frac{T}{N_h}$ with
$N_h \in \N$. 

We will investigate the implicit Euler scheme given by
\begin{align}
  \label{eqn:RN_y}
  \begin{split}
    \begin{cases}
      y_{j+1} = y_{j} + h b(y_{j+1}) + x_{j+1}-x_{{j}}&
      \text{ for } j\in \{0,\ldots,N_h-1\},\\
      y_0=\xi,&      
    \end{cases}
  \end{split}
\end{align}
where, for each $i \in \{0,\ldots,N_h\}$, $y_{i}$ denotes the numerical
approximation of the exact solution $y(t_i)$ at time point $t_{i}$ and $x_i$ is
short for $x(t_i)$.  

The implementation of \eqref{eqn:RN_y} requires the solution of a nonlinear
equation in each time step. The following result ensures the existence of 
a unique $\mathbb{R}^d$-valued sequence $(y_j)_{0 \leq j \leq N_h}$ satisfying
the difference equation \eqref{eqn:RN_y}. Proposition~\ref{prop:nonlinear} is
a standard result in nonlinear analysis and often called \emph{Uniform
Monotonicity Theorem}. For a proof we refer to \cite[Chap.~6.4]{ortega2000}
and \cite[Theorem~C.2]{SH96}.

\begin{proposition}
  \label{prop:nonlinear}
  Let $G \colon \R^d \to \R^d$ be a continuous mapping such that
  there exists a constant $L_G \in (0,\infty)$ with
  \begin{align*}
    \langle G(\varsigma) - G(\zeta), \varsigma - \zeta \rangle \ge L_G
    |\varsigma - \zeta|^2, \quad \text{ for all } \varsigma, \zeta
    \in\R^d.
  \end{align*}
  Then $G$ is a homeomorphism with Lipschitz continuous inverse. In particular,
  it holds
  \begin{align*}
    |G^{-1}(\varsigma) - G^{-1}(\zeta)| \le \frac{1}{L_G} |\varsigma - \zeta|,
    \quad \text{ for all } \varsigma, \zeta \in\R^d.
  \end{align*}
\end{proposition}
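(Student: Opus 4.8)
The plan is to establish, in order: (a) a two-sided control of $G$ that simultaneously yields injectivity and the Lipschitz bound for $G^{-1}$; (b) surjectivity of $G$ via a coercivity-plus-Brouwer argument; and (c) the assembly of these facts into the homeomorphism claim.

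For step (a) I would start from the hypothesis together with the Cauchy--Schwarz inequality: for all $\varsigma,\zeta\in\R^d$,
\[
  |G(\varsigma)-G(\zeta)|\,|\varsigma-\zeta| \ge \langle G(\varsigma)-G(\zeta),\varsigma-\zeta\rangle \ge L_G|\varsigma-\zeta|^2,
\]
so that $|G(\varsigma)-G(\zeta)| \ge L_G|\varsigma-\zeta|$. In particular $G(\varsigma)=G(\zeta)$ forces $\varsigma=\zeta$, hence $G$ is injective, and the inverse map $G^{-1}$ (defined a priori only on the range $G(\R^d)$) automatically satisfies $|G^{-1}(\varsigma)-G^{-1}(\zeta)|\le \tfrac1{L_G}|\varsigma-\zeta|$. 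This already provides the quantitative estimate in the statement; what remains is to see that the range is all of $\R^d$, after which the Lipschitz bound gives continuity of $G^{-1}$ for free.

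For step (b), fix an arbitrary $y\in\R^d$ and seek $x$ with $G(x)=y$. Writing $H(x):=G(x)-y$, the monotonicity assumption gives, for $|x|=R$,
\[
  \langle H(x),x\rangle = \langle G(x)-G(0),x\rangle + \langle G(0)-y,x\rangle \ge L_G R^2 - |G(0)-y|\,R,
\]
which is strictly positive once $R>|G(0)-y|/L_G$. I would then invoke the standard consequence of Brouwer's fixed point theorem: a continuous vector field on $\overline{B_R(0)}$ having no zero and pointing strictly outward on $\partial B_R(0)$ is impossible, since otherwise $x\mapsto -R\,H(x)/|H(x)|$ would be a continuous self-map of $\overline{B_R(0)}$ whose fixed point $x_0$ necessarily lies on $\partial B_R(0)$ and satisfies $\langle H(x_0),x_0\rangle = -R|H(x_0)|<0$, contradicting the estimate above. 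Hence $H$ vanishes somewhere in $B_R(0)$, i.e. $G(x)=y$ is solvable; as $y$ was arbitrary, $G$ is onto.

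Combining (a) and (b), $G$ is a continuous bijection of $\R^d$ whose inverse is Lipschitz with constant $1/L_G$; in particular $G^{-1}$ is continuous, so $G$ is a homeomorphism and the displayed estimate holds. The main (and essentially only non-bookkeeping) step is the surjectivity in (b): everything else is Cauchy--Schwarz and elementary properties of inverse maps, whereas (b) genuinely requires a topological input. An alternative route for (b) would be to observe that the estimate in (a) makes $G$ proper, hence a closed map, while Brouwer's invariance of domain makes $G$ open, so that $G(\R^d)$ is a nonempty open-and-closed subset of the connected space $\R^d$; I would prefer the fixed-point route above since it uses only the Brouwer theorem in its most elementary form, consistent with the arguments in \cite{ortega2000} and \cite{SH96}.
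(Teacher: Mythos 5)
Your proof is correct. Note that the paper itself does not prove this proposition: it labels it the Uniform Monotonicity Theorem and simply refers to \cite[Chap.~6.4]{ortega2000} and \cite[Theorem~C.2]{SH96}, so there is no in-paper argument to compare against. Your step (a) --- Cauchy--Schwarz giving $|G(\varsigma)-G(\zeta)|\ge L_G|\varsigma-\zeta|$, hence injectivity and the Lipschitz bound for $G^{-1}$ on the range --- is exactly the standard first half, and your step (b) is a valid and complete surjectivity argument: the coercivity estimate $\langle G(x)-y,x\rangle>0$ on a large sphere, combined with the Brouwer fixed point applied to $x\mapsto -R\,H(x)/|H(x)|$, correctly forces a zero of $H$ in the ball (the fixed point must lie on the sphere because the map's image does, and the resulting identity $\langle H(x_0),x_0\rangle=-R|H(x_0)|<0$ contradicts coercivity). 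The alternative you sketch (properness plus invariance of domain plus connectedness) is essentially the route taken in Ortega--Rheinboldt, so both of your options are faithful to the cited literature; the fixed-point version you chose has the advantage of using only Brouwer's theorem in its most elementary form. No gaps.
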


An application of Proposition~\ref{prop:nonlinear} immediately gives the
well-posedness of the numerical method.

\begin{theorem}[Well-posedness]
  \label{thm:well-posedness}
  Let $(x_j)_{0 \leq j \leq N_h}$ be an $\R^d$-valued sequence.
  Let Assumption~\ref{as:drift} be satisfied with one-sided Lipschitz constant $C_b$. 
  If $C_b h < 1$ then there exists a unique $\mathbb{R}^d$-valued sequence
  $(y_j)_{0 \leq j \leq N_h}$ satisfying the difference equation \eqref{eqn:RN_y}. 
\end{theorem}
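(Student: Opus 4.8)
The plan is to solve the difference equation \eqref{eqn:RN_y} recursively, one time step at a time, by recognizing each step as the inversion of a strongly monotone map and invoking Proposition~\ref{prop:nonlinear}. Concretely, fix $j \in \{0,\ldots,N_h-1\}$ and suppose $y_j \in \R^d$ is already determined (with $y_0 = \xi$). The relation \eqref{eqn:RN_y} for $y_{j+1}$ can be rewritten as
\begin{align*}
  G(y_{j+1}) = y_j + x_{j+1} - x_j, \qquad \text{where } G(\zeta) := \zeta - h\, b(\zeta).
\end{align*}
So it suffices to show that $G \colon \R^d \to \R^d$ is a homeomorphism; then $y_{j+1} := G^{-1}(y_j + x_{j+1} - x_j)$ is the unique solution of that step, and induction on $j$ yields the existence of a unique sequence $(y_j)_{0 \le j \le N_h}$.

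First I would check that $G$ is continuous, which is immediate from the continuity of $b$ in Assumption~\ref{as:drift}. Next I would verify the strong monotonicity hypothesis of Proposition~\ref{prop:nonlinear}: for $\varsigma, \zeta \in \R^d$,
\begin{align*}
  \langle G(\varsigma) - G(\zeta), \varsigma - \zeta \rangle
  = |\varsigma - \zeta|^2 - h \langle b(\varsigma) - b(\zeta), \varsigma - \zeta \rangle
  \ge (1 - C_b h)\, |\varsigma - \zeta|^2,
\end{align*}
where the inequality uses the one-sided Lipschitz bound \eqref{eqn:onelip}. Since by assumption $C_b h < 1$, the constant $L_G := 1 - C_b h$ is strictly positive, so Proposition~\ref{prop:nonlinear} applies and gives that $G$ is a homeomorphism (in fact with inverse Lipschitz of constant $1/(1-C_b h)$, which is a useful byproduct for later stability estimates).

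There is essentially no serious obstacle here; the proof is a direct application of the Uniform Monotonicity Theorem, and the only point requiring care is the sign in the one-sided Lipschitz condition — it enters with the correct orientation precisely because the implicit term $-h\,b$ appears on the same side as the identity, so that $C_b h < 1$ (rather than a two-sided Lipschitz constant) is exactly what is needed to keep $L_G$ positive. I would close by noting explicitly that the recursion $y_0 = \xi$, $y_{j+1} = G^{-1}(y_j + x_{j+1} - x_j)$ is well defined for every $j \in \{0,\ldots,N_h-1\}$, which is the claimed well-posedness.
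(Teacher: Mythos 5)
Your proposal is correct and follows essentially the same argument as the paper: define $G(\zeta) = \zeta - h\,b(\zeta)$, verify the strong monotonicity bound $\langle G(\varsigma)-G(\zeta),\varsigma-\zeta\rangle \ge (1-C_b h)|\varsigma-\zeta|^2$ via the one-sided Lipschitz condition, and apply Proposition~\ref{prop:nonlinear} to define $y_{j+1}=G^{-1}(y_j+x_{j+1}-x_j)$ recursively. No gaps; the extra observations about continuity of $G$ and the Lipschitz constant of $G^{-1}$ are consistent with the paper.
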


\begin{proof}
  Let $C_b h < 1$ and define $G \colon \R^d \to \R^d$ by $G(\zeta) =
  \zeta - h b(\zeta)$ for all $\zeta \in \R^d$. Then it holds
  \begin{align*}
    \langle G(\varsigma) - G(\zeta), \varsigma - \zeta \rangle
    = |\varsigma - \zeta|^2 - h   
    \langle b(\varsigma) - b(\zeta), \varsigma - \zeta \rangle
    \ge (1 - C_b h) |\varsigma - \zeta|^2.
  \end{align*}
  Due to $C_b h < 1$ we have $L_G := 1 - C_b h > 0$. Hence,
  Proposition~\ref{prop:nonlinear} is applicable. In particular,
  the sequence $(y_j)_{0 \leq j \leq N_h}$ defined by 
  \begin{align*}
    y_{j+1} := G^{-1}(y_{j} + x_{j+1}-x_{{j}})
  \end{align*}
  for every $j \in \{0,1,\ldots,N_h -1\}$ satisfies \eqref{eqn:RN_y}.
\end{proof}

We derive a bound for the numerical solution \eqref{eqn:RN_y}
uniformly with respect to the step size $h$.

\begin{proposition}
  \label{prop:RN_y}
  Under the same assumptions as in Theorem~\ref{thm:well-posedness}, the 
  solution $(y_j)_{0 \leq j\leq N_h}$ to the difference equation
  \eqref{eqn:RN_y} satisfies  
  $$\max_{n \in \{0,\ldots,N_h\}}|y_n|
  \leq \ee^{2C_b T } \big(|\xi| + \max_{n \in \{1,\ldots,N_h\}}
  |b(x_n)| T \big) + \max_{n \in \{0,\ldots,N_h\}}  |x_n|,$$ 
  provided $2 C_b h \leq 1$.
\end{proposition}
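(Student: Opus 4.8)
The plan is to eliminate the additive noise from the implicit step by the change of variables already used for the continuous equation. Set $z_j := y_j - x_j$ for $0 \le j \le N_h$, so that $z_0 = \xi$ (recall $x_0 = 0$), and subtract $x_{j+1}$ from both sides of \eqref{eqn:RN_y} to turn the scheme into the noise-free implicit recursion
\begin{equation*}
  z_{j+1} = z_j + h\, b\bigl(z_{j+1} + x_{j+1}\bigr), \qquad j \in \{0,\dots,N_h-1\}.
\end{equation*}
Since $|y_n| = |z_n + x_n| \le |z_n| + |x_n|$ and $h N_h = T$, the whole task reduces to bounding $\max_{0 \le n \le N_h}|z_n|$.

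First I would derive a one-step estimate by testing the recursion against $z_{j+1}$:
\begin{equation*}
  |z_{j+1}|^2 = \langle z_j, z_{j+1}\rangle + h\,\langle b(z_{j+1}+x_{j+1}), z_{j+1}\rangle .
\end{equation*}
Estimating $\langle z_j, z_{j+1}\rangle \le |z_j|\,|z_{j+1}|$ by Cauchy--Schwarz and writing $b(z_{j+1}+x_{j+1}) = \bigl(b(z_{j+1}+x_{j+1}) - b(x_{j+1})\bigr) + b(x_{j+1})$, the one-sided Lipschitz condition \eqref{eqn:onelip} applied to the pair $\varsigma = z_{j+1}+x_{j+1}$, $\zeta = x_{j+1}$ gives $\langle b(z_{j+1}+x_{j+1}) - b(x_{j+1}), z_{j+1}\rangle \le C_b|z_{j+1}|^2$, while $\langle b(x_{j+1}), z_{j+1}\rangle \le |b(x_{j+1})|\,|z_{j+1}|$. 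Dividing by $|z_{j+1}|$ (the estimate being trivial when $z_{j+1}=0$) yields
\begin{equation*}
  (1 - C_b h)\,|z_{j+1}| \le |z_j| + h\,|b(x_{j+1})| .
\end{equation*}
The hypothesis $2 C_b h \le 1$ is exactly what makes this useful: it keeps $1 - C_b h \ge \tfrac12 > 0$, so $|z_{j+1}|$ can be solved for, and on the range $C_b h \in [0,\tfrac12]$ one has the elementary bound $(1-C_b h)^{-1} \le 1 + 2 C_b h \le \ee^{2 C_b h}$.

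Finally I would iterate. With $q := (1-C_b h)^{-1} \le \ee^{2C_b h}$ and $M := \max_{1 \le k \le N_h}|b(x_k)|$, the one-step estimate reads $|z_{j+1}| \le q\,|z_j| + q h M$, and a straightforward induction together with the finite geometric sum $\sum_{k=1}^{n} q^{k} \le n\,q^{n}$ gives
\begin{equation*}
  |z_n| \le q^{n}|z_0| + h M \sum_{k=1}^{n} q^{k} \le q^{N_h}\bigl(|z_0| + T M\bigr) \le \ee^{2 C_b T}\bigl(|\xi| + T M\bigr),
\end{equation*}
using $q \ge 1$, $n \le N_h$, $q^{N_h} \le \ee^{2 C_b h N_h} = \ee^{2 C_b T}$ and $h N_h = T$. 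Combining this with $|y_n| \le |z_n| + |x_n|$ and taking the maximum over $n \in \{0,\dots,N_h\}$ yields the claimed bound.

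The only genuine obstacle is the implicitness of the scheme: the recursion does not give a bound for $|z_{j+1}|$ directly, because $z_{j+1}$ appears inside $b$ on the right-hand side. Pairing the equation with $z_{j+1}$ itself is precisely what converts the monotonicity encoded in \eqref{eqn:onelip} into a usable sign, and the step-size restriction $2 C_b h \le 1$ is exactly what is needed to move the leftover term $C_b h\,|z_{j+1}|$ back to the left. Everything past that point is a routine geometric-sum estimate together with a triangle inequality.
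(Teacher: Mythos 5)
Your argument is correct and is essentially the paper's own: the same change of variables $z_j = y_j - x_j$, the same pairing of the implicit recursion with $z_{j+1}$, and the same splitting $b(z_{j+1}+x_{j+1}) = \bigl(b(z_{j+1}+x_{j+1})-b(x_{j+1})\bigr) + b(x_{j+1})$ to exploit the one-sided Lipschitz condition, leading to the identical one-step inequality $|z_{j+1}| - |z_j| \le C_b h\,|z_{j+1}| + h\,|b(x_{j+1})|$. The only divergence is in the finishing step: the paper sums this inequality, absorbs $C_b h\,|z_n| \le \tfrac12 |z_n|$ into the left-hand side, and invokes the discrete Gronwall lemma (Lemma~\ref{lem:Gronwall-D}), which actually produces an intermediate bound $2\ee^{2C_b t_n}(\cdots)$ with an extra factor of $2$; you instead solve the one-step inequality for $|z_{j+1}|$ and iterate the factor $q = (1-C_b h)^{-1} \le \ee^{2C_b h}$ with a geometric sum, which delivers the stated constant $\ee^{2C_bT}$ exactly, with no factor of $2$ to discard. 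One shared caveat: both arguments replace $|z_0| = |\xi - x_0|$ by $|\xi|$ in the final display, which is only legitimate under the (unstated but standard) convention $x_0 = x(0) = 0$; otherwise $|\xi|$ should read $|\xi - x_0|$.
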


\begin{proof}
  Define $\hat{y}_{j}:=y_j-x_j$. Then, the recursion \eqref{eqn:RN_y} can be
  rewritten in terms of $\hat{y}_j$ by
  \begin{equation}
    \label{eqn:inter_yc}
    \hat{y}_{j+1} = \hat{y}_{j} + h b\big(\hat{y}_{j+1}+x_{j+1}\big),
    \quad \text{ for all } j \in \{0,\ldots,N_h-1\}. 
  \end{equation}
  To prove the boundedness, we use the following estimation:
  \begin{align}
    \label{eqn:estimation_bdaddi}
    \begin{split}
      \big \langle \hat{y}_{j+1}-\hat{y}_{j}, \hat{y}_{j+1}\big\rangle
      &= h\big\langle b\big(\hat{y}_{j+1}+x_{j+1}\big),
      \hat{y}_{j+1}\big\rangle\\
      &=h\big\langle
      b\big(\hat{y}_{j+1}+x_{j+1}\big)-b\big(x_{j+1}\big), \hat{y}_{j+1}\big
      \rangle+h \big\langle b\big(x_{j+1}\big), \hat{y}_{j+1}\big\rangle\\
      & \leq hC_b |\hat{y}_{j+1}|^2 +  h |b(x_{j+1})|
      |\hat{y}_{j+1}|, 
    \end{split}
  \end{align} 
  where the inequality follows from the one-sided Lipschitz
  condition and the Cauchy--Schwarz inequality.
  On the other hand, note that
  \begin{equation}
    \label{eqn:elementary}
    \begin{split}
      \langle u_2-u_1,u_2\rangle
      =  |u_2|^2 -  \langle u_1, u_2 \rangle
      & \ge |u_2| \big( |u_2| - |u_1| \big)
    \end{split}
  \end{equation}
  for all $u_1,u_2\in \mathbb{R}^d$.
  From \eqref{eqn:elementary} and estimate \eqref{eqn:estimation_bdaddi},
  we can conclude that 
  \begin{align*}
    |\hat{y}_{j+1}| \big( |\hat{y}_{j+1}| - |\hat{y}_{j}| \big)
    &\le \langle \hat{y}_{j+1}-\hat{y}_{j}, \hat{y}_{j+1}\big\rangle\\
    &\leq hC_b |\hat{y}_{j+1}|^2 +  h |b(x_{j+1})| |\hat{y}_{j+1}|
  \end{align*}
  After canceling one time $|\hat{y}_{j+1}|$ from both sides of the inequality
  we arrive at
  \begin{align*}
    |\hat{y}_{j+1}| - |\hat{y}_{j}| \le C_b h |\hat{y}_{j+1}| + h |b(x_{j+1})|
  \end{align*}
  for every $j \in \{0,\ldots,N_h-1\}$. 
   Summing both sides up to arbitrary $n \in \{1,\ldots,N_h\}$ yields 
  \begin{align*}
    |\hat{y}_{n}| &\le |\hat{y}_{0}| + C_b h \sum_{j=0}^{n-1} |\hat{y}_{j+1}|
    + h \sum_{j=0}^{n-1} |b\big(x_{j+1}\big)| \\
    &\le |\hat{y}_{0}| + C_b h \sum_{j=1}^{n-1} |\hat{y}_{j}|
    + \frac{1}{2} |\hat{y}_{n}| + h \sum_{j=0}^{n-1} |b\big(x_{j+1}\big)|
  \end{align*}
  due to $2 C_b h \le 1$.
  Then, an application of the discrete Gronwall inequality, i.e.,
  Lemma~\ref{lem:Gronwall-D}, leads to 
  \begin{align*}
    |\hat{y}_{n}| \leq 2 \ee^{2 C_b t_n} 
    \big(|\hat{y}_{0}| + \max_{j \in \{1,\ldots,n\}}
    |b(x_j) | t_n\big)
  \end{align*}
  for every $n \in \{0,\ldots,N_h\}$. Finally, after
  recalling the relationship between $y$ and $\hat{y}$, we arrive at 
  \begin{align*}
    \max_{n \in \{0,\ldots,N_h\}} |y_{n}|
    &\leq \max_{n \in \{0,\ldots, N_h\}} |\hat{y}_{n}| +
    \max_{n \in \{0,\ldots, N_h\}} |x_n|\\
    &\leq \ee^{2C_bT} \big(|\xi| + \max_{n \in \{1,\ldots,N_h\}} |b(x_n)|
    T\big) + \max_{n \in \{0,\ldots, N_h\}} |x_n|.
  \end{align*}
  This completes the proof.
\end{proof}

Based on similar arguments as in the proof of Proposition~\ref{prop:RN_y}
we can derive error estimates for the implicit Euler method. In particular, 
we show that the solution $(y_j)_{0 \leq j\leq N_h}$ to the difference equation
\eqref{eqn:RN_y} converges to the exact solution with an order $\frac{1}{p}$
depending on the regularity of the driving path $x$.
We first investigate the case if $x$ is of finite $p$-variation.

\begin{theorem}[$p$-variation case]
  \label{thm:RDE_y2}
  Let $x \colon [0,T] \to \R^d$ be continuous and of finite $p$-variation for
  some $p \in [1,\infty)$. Suppose that Assumption~\ref{as:drift} is fulfilled
  and that the initial value problem \eqref{eqn:add_noise} has a unique
  solution $y$ of finite $p$-variation satisfying
  \begin{align*}
    \| b \circ y \|_{p\mathrm{-var};[0,T]} < \infty.
  \end{align*}
  Then the implicit Euler scheme \eqref{eqn:RN_y} converges to the solution of
  \eqref{eqn:add_noise} with order $\frac{1}{p}$. To be more precise, it holds
  \begin{align*}
    \max_{n \in \{0,\ldots,N_h\}} \big|y(t_n)-y_n\big| 
    \le 2 T^{1 - \frac{1}{p}} \ee^{2C_b T} \|b \circ y
    \|_{p-\mathrm{var};[0,T]} h^{\frac{1}{p}}, 
  \end{align*}
  for all $h \leq \frac{1}{2 C_b}$.
\end{theorem}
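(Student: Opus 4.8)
The plan is to adapt the a priori estimate from the proof of Proposition~\ref{prop:RN_y} to the error sequence $e_n := y(t_n) - y_n$, $n \in \{0,\ldots,N_h\}$. Note first that $h \le \frac{1}{2C_b}$ forces $C_b h \le \frac12 < 1$, so the scheme \eqref{eqn:RN_y} is well-posed by Theorem~\ref{thm:well-posedness}. Subtracting \eqref{eqn:RN_y} from the integral identity \eqref{eqn:add_sol} taken between two consecutive grid points $t_n$ and $t_{n+1}$, the increments of $x$ cancel and one is left with
\begin{align*}
  e_{n+1} = e_n + \int_{t_n}^{t_{n+1}} \bigl(b(y(s)) - b(y(t_{n+1}))\bigr)\diff{s} + h\bigl(b(y(t_{n+1})) - b(y_{n+1})\bigr),
\end{align*}
the last term encoding the implicitness. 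Taking the scalar product with $e_{n+1}$, bounding the left-hand side below by $|e_{n+1}|\bigl(|e_{n+1}|-|e_n|\bigr)$ via \eqref{eqn:elementary}, estimating the implicit term through the one-sided Lipschitz condition \eqref{eqn:onelip} as $\langle b(y(t_{n+1})) - b(y_{n+1}), e_{n+1}\rangle \le C_b|e_{n+1}|^2$, and applying Cauchy--Schwarz together with $|b(y(s)) - b(y(t_{n+1}))| \le \|b\circ y\|_{p\text{-var};[t_n,t_{n+1}]}$ for $s \in [t_n,t_{n+1}]$ to the integral term, I obtain after cancelling one factor $|e_{n+1}|$ (the inequality being trivial when $e_{n+1} = 0$) the one-step bound
\begin{align*}
  |e_{n+1}| - |e_n| \le C_b h\, |e_{n+1}| + h\, \|b\circ y\|_{p\text{-var};[t_n,t_{n+1}]}.
\end{align*}

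Summing this over $j \in \{0,\ldots,n-1\}$, using $e_0 = 0$, then moving the term $C_b h |e_n|$ (arising from $j=n-1$) to the left-hand side and using $2C_b h \le 1$, yields
\begin{align*}
  |e_n| \le 2 h \sum_{j=0}^{n-1} \|b\circ y\|_{p\text{-var};[t_j,t_{j+1}]} + 2 C_b h \sum_{j=1}^{n-1}|e_j|.
\end{align*}
To control the first sum I would use that $\omega(s,t) := \|b\circ y\|^p_{p\text{-var};[s,t]}$ is a control function (\cite[Proposition~5.8]{FV10}), hence superadditive, so $\sum_{j=0}^{n-1}\|b\circ y\|^p_{p\text{-var};[t_j,t_{j+1}]} \le \|b\circ y\|^p_{p\text{-var};[0,T]}$, and then apply Hölder's inequality with exponents $p$ and $\frac{p}{p-1}$ to the sum of the first powers of the increments:
\begin{align*}
  h \sum_{j=0}^{n-1}\|b\circ y\|_{p\text{-var};[t_j,t_{j+1}]} \le h\, n^{1-\frac{1}{p}} \|b\circ y\|_{p\text{-var};[0,T]} = (nh)^{1-\frac{1}{p}} h^{\frac{1}{p}} \|b\circ y\|_{p\text{-var};[0,T]} \le T^{1-\frac{1}{p}} h^{\frac{1}{p}} \|b\circ y\|_{p\text{-var};[0,T]},
\end{align*}
using $nh \le T$. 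Plugging this back in and applying the discrete Gronwall inequality, Lemma~\ref{lem:Gronwall-D}, with $a = 2 T^{1-\frac1p} h^{\frac1p} \|b\circ y\|_{p\text{-var};[0,T]}$ and $b_i = 2 C_b h$, one concludes $|e_n| \le a \exp\bigl(2C_b(n-1)h\bigr) \le a\, \ee^{2C_b T}$ for every $n$, which is exactly the asserted estimate.

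I do not expect a genuine obstacle here: this is essentially a routine adaptation of the boundedness argument already carried out for Proposition~\ref{prop:RN_y}, with $e_n$ in place of $\hat y_n$. The only step requiring a little care is converting the sum of local $p$-variation increments into the global $p$-variation with the correct power $h^{\frac1p}$, which is where superadditivity of the control function and Hölder's inequality enter; the manipulations with the scalar product, the elementary inequality \eqref{eqn:elementary}, the absorption of the implicit term via one-sided Lipschitz continuity, and the discrete Gronwall step are entirely parallel to the earlier proof.
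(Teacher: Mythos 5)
Your proposal is correct and follows essentially the same route as the paper: test the error recursion against $e_{n+1}$, use \eqref{eqn:elementary} and the one-sided Lipschitz condition to absorb the implicit term, sum, apply the discrete Gronwall inequality, and convert the accumulated local drift increments into the global $p$-variation via H\"older with exponents $p$ and $\frac{p}{p-1}$. The only (cosmetic) difference is in handling the term $\int_{t_n}^{t_{n+1}}\bigl(b(y(s))-b(y(t_{n+1}))\bigr)\diff{s}$: the paper invokes the mean value theorem for integrals and then observes that the resulting points form a partition of $[0,T]$, whereas you bound the integrand by $\|b\circ y\|_{p\text{-var};[t_n,t_{n+1}]}$ and use superadditivity of the control $\|b\circ y\|^p_{p\text{-var};[\cdot,\cdot]}$ — both yield the identical constant.
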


\begin{proof}
  We denote by $e_j :=y(t_j)-y_j$ the difference between the solutions of
  \eqref{eqn:add_noise} and \eqref{eqn:RN_y} at each time step $j \in
  \{0,\ldots,N_h\}$. Then observe that
  \begin{align}
    \label{eq:term1}
    \begin{split}
      &\big \langle e_{j+1}-e_{j}, e_{j+1}\big\rangle
      = \Big\langle \int_{t_j}^{t_{j+1}}\big(b(y(s)) - b(y_{j+1})\big)\diff{s},
      e_{j+1}\Big\rangle \\
      &\quad = \Big\langle
      \int_{t_j}^{t_{j+1}}\big(b(y(s))-b(y(t_{j+1}))\big)\diff{s},
      e_{j+1}\Big\rangle + \Big\langle
      \int_{t_j}^{t_{j+1}}\big(b(y(t_{j+1}))-b(y_{j+1})\big)\diff{s},
      e_{j+1}\Big\rangle\\
      &\quad \le \int_{t_j}^{t_{j+1}} \big\langle
      b(y(s))-b(y(t_{j+1})), e_{j+1} \big\rangle \diff{s}
      + C_b h | e_{j + 1}|^2,
    \end{split}
  \end{align}
  by applying \eqref{eqn:onelip}. 
  Next, the mean value theorem for integrals yields the existence of some $\xi_j
  \in (t_{j},t_{j+1})$ with
  \begin{align*}
    \int_{t_j}^{t_{j+1}} \big\langle
    b(y(s))-b(y(t_{j+1})), e_{j+1} \big\rangle \diff{s}
    = h \big\langle b(y(\xi_j))-b(y(t_{j+1})), e_{j+1} \big\rangle.
  \end{align*}
  From \eqref{eqn:elementary} it follows that
  \begin{align*}
    |e_{j+1}| \big( |e_{j+1}| -  |e_{j}|\big)
    \le \big\langle e_{j+1}-e_{j}, e_{j+1}\big\rangle. 
  \end{align*}
  After inserting this into \eqref{eq:term1} an application of the
  Cauchy--Schwarz inequality shows that 
  \begin{align*}
    |e_{j+1}| \big( |e_{j+1}| -  |e_{j}|\big) 
    \le h \big|b(y(\xi_j))-b(y(t_{j+1}))\big| |e_{j+1}| 
    +  C_b h | e_{j + 1}|^2
  \end{align*}
  for every $j \in \{0,\ldots,N_h - 1\}$. Hence, after canceling $ |e_{j+1}|$
  from both sides of the inequality and summing up to $n$ we arrive at
  \begin{align*}
    |e_{n}| &\le  |e_{0}| 
    + h \sum_{j = 0}^{n-1} \big|b(y(\xi_j))-b(y(t_{j+1}))\big|  
    + C_b h \sum_{j = 0}^{n-1} | e_{j + 1}|\\
    &\le |e_{0}| 
    + h \sum_{j = 0}^{n-1} \big|b(y(\xi_j))-b(y(t_{j+1}))\big|  
    + C_b h \sum_{j = 1}^{n-1} | e_{j}| + \frac{1}{2} |e_{n}|,
  \end{align*}
  where the last step follows from $2 C_b h \le 1$. Therefore, we have shown
  that 
  \begin{align*}
    |e_{n}| &\le 2|e_{0}| + 2 h \sum_{j = 0}^{n-1}
    \big|b(y(\xi_j))-b(y(t_{j+1}))\big| 
    + 2 C_b h \sum_{j = 1}^{n-1} | e_{j}|
  \end{align*}
  for every $n \in \{1,\ldots,N_h\}$.
  Hence, the discrete Gronwall inequality, Lemma~\ref{lem:Gronwall-D}, is
  applicable and yields
  \begin{align*}
    \max_{n \in \{1,\ldots,N_h\}} |e_{n}|
    \le 2 \ee^{2 C_b T} \Big( |e_{0}| + h \sum_{j = 0}^{n-1}
    \big|b(y(\xi_j))-b(y(t_{j+1}))\big| \Big).
  \end{align*}
  Finally, an application of the H\"older inequality with $1 =
  \frac{1}{p} + \frac{1}{q}$ gives
  \begin{align*}
    h \sum_{j = 0}^{n-1}
    \big|b(y(\xi_j))-b(y(t_{j+1}))\big|
    &\le \Big( \sum_{j = 0}^{N_h-1} h^q \Big)^{\frac{1}{q}} 
    \Big( \sum_{j = 0}^{N_h-1} \big|b(y(\xi_j))-b(y(t_{j+1}))\big|^p
    \Big)^{\frac{1}{p}}\\
    &\le T^{\frac{1}{q}} h^{1 - \frac{1}{q}} 
    \| b \circ y \|_{p-\mathrm{var};[0,T]}.
  \end{align*}
  Since $e_0 = 0$ and $h^{1 - \frac{1}{q}} = h^{\frac{1}{p}}$ this completes
  the proof. 
\end{proof}

Observe that the proof of Theorem~\ref{thm:RDE_y2}
only requires Assumption~\ref{as:drift}. In the next theorem we additionally
assume that $x$ is $\frac{1}{p}$-H\"older continuous and $b$ locally Lipschitz
continuous. Then we obtain a more explicit error estimate.

\begin{theorem}[H\"older case]
  \label{thm:RDE_y}
  Let $x \colon [0,T] \to \R^d$ be $\frac{1}{p}$-H\"older continuous for some
  $p \in [1,\infty)$. Suppose that Assumption~\ref{as:drift} and
  Assumption~\ref{as:drift2} are satisfied. Then the implicit Euler scheme
  \eqref{eqn:RN_y} converges to the solution of \eqref{eqn:add_noise} with order
  $\frac{1}{p}$. To be more precise, it holds
  \begin{align*}
    \max_{n \in \{0,\ldots,N_h\}} \big|y(t_n)-y_n\big| 
    \le 2 T \ee^{2C_b T} \Upsilon_b \big( \|y\|_{\infty;[0,T]}\big) 
    \|y\|_{\frac{1}{p};[0,T]} h^{\frac{1}{p}},
  \end{align*}
  for all step sizes satisfying $2 C_b h \leq 1$.
\end{theorem}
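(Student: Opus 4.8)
The plan is to deduce this from the $p$-variation estimate of Theorem~\ref{thm:RDE_y2}, after upgrading the regularity information. First I would record that a $\frac1p$-H\"older continuous path is continuous and of finite $p$-variation (the embedding $C^{\frac1p\text{-H\"ol}} \subset C^{p\text{-var}}$ recalled in the remark after Definition~\ref{def:p-varholder}), so $x$ satisfies the hypotheses of Theorem~\ref{thm:RDE_y2}. Moreover, under Assumptions~\ref{as:drift} and~\ref{as:drift2}, Corollary~\ref{cor:RDE} supplies the unique global continuous solution $y$ to \eqref{eqn:add_noise} and, crucially, asserts that $y$ is itself $\frac1p$-H\"older continuous on $[0,T]$; in particular $y$ is of finite $p$-variation.

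The key step is then to check that $b \circ y$ has finite $p$-variation, which also pins down the constant. Since $y$ is continuous on the compact interval $[0,T]$, its range lies in $\overline{B_{r}(0)}$ with $r = \|y\|_{\infty;[0,T]}$, so Assumption~\ref{as:drift2} and the H\"older bound on $y$ give, for all $s \le t$,
\begin{align*}
  |b(y(t)) - b(y(s))|
  \le \Upsilon_b\big(\|y\|_{\infty;[0,T]}\big)\, |y(t) - y(s)|
  \le \Upsilon_b\big(\|y\|_{\infty;[0,T]}\big)\, \|y\|_{\frac1p;[0,T]}\, |t-s|^{\frac1p}.
\end{align*}
Raising to the $p$-th power and summing over an arbitrary partition, and using $\sum_i |t_{i+1}-t_i| = T$, yields
\begin{align*}
  \| b \circ y \|_{p\text{-var};[0,T]}
  \le \Upsilon_b\big(\|y\|_{\infty;[0,T]}\big)\, \|y\|_{\frac1p;[0,T]}\, T^{\frac1p} < \infty,
\end{align*}
so all hypotheses of Theorem~\ref{thm:RDE_y2} are indeed met.

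Finally I would invoke Theorem~\ref{thm:RDE_y2} and substitute the above bound for $\|b \circ y\|_{p\text{-var};[0,T]}$; since $T^{1-\frac1p} \cdot T^{\frac1p} = T$, the factors combine to give exactly the claimed estimate, valid for $2C_b h \le 1$. I do not expect a real obstacle here: the argument is a direct chaining of the earlier results. The only point that needs a little care is verifying that every hypothesis of Theorem~\ref{thm:RDE_y2} — continuity and finite $p$-variation of $x$, existence and uniqueness of a finite $p$-variation solution $y$, and finiteness of $\|b \circ y\|_{p\text{-var};[0,T]}$ — is genuinely available, which is precisely what Corollary~\ref{cor:RDE} together with the H\"older-to-$p$-variation embedding and the local Lipschitz property of $b$ provide.
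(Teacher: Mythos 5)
Your proposal is correct and follows essentially the same route as the paper: reduce to Theorem~\ref{thm:RDE_y2}, use Corollary~\ref{cor:RDE} for boundedness and H\"older continuity of $y$, bound $\|b \circ y\|_{p\text{-var};[0,T]}$ by $\Upsilon_b(\|y\|_{\infty;[0,T]})\,\|y\|_{\frac1p;[0,T]}\,T^{\frac1p}$ via Assumption~\ref{as:drift2}, and combine the powers of $T$. No gaps.
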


\begin{proof}
  In light of Theorem~\ref{thm:RDE_y2} it remains to estimate
  \begin{align*}
    \| b \circ y \|_{p\mathrm{-var};[0,T]}^p =  \sup_{(t_i) \subset [0,T]}
    \sum_i | b(y(t_{i+1})) - b(y(t_{i})) |^p,
  \end{align*}
  where the supremum is taken over all finite partitions of the interval
  $[0,T]$. 
  To this end, let $(t_i)_{i \in \{0,\ldots,N\}}$ be an arbitrary partition. 
  From Corollary~\ref{cor:RDE} it follows that the exact solution $y$ is bounded. 
  Then, it follows from Assumption~\ref{as:drift2} and the H\"older continuity
  of $y$ that
  \begin{align*}
    \sum_i | b(y(t_{i+1})) - b(y(t_{i})) |^p 
    &\le \Upsilon_b \big( \|y\|_{\infty;[0,T]}\big)^p 
    \sum_i | y(t_{i+1}) - y(t_{i}) |^p \\
    &\le \Upsilon_b \big( \|y\|_{\infty;[0,T]}\big)^p
    \|y\|_{\frac{1}{p};[0,T]}^p \sum_i ( t_{i+1} - t_{i} )\\
    &= \Upsilon_b \big( \|y\|_{\infty;[0,T]}\big)^p
    \|y\|_{\frac{1}{p};[0,T]}^p T. 
  \end{align*}
  Inserting this into the error estimate in Theorem~\ref{thm:RDE_y2} then
  yields the assertion.
\end{proof}

\begin{remark}
  \label{rmk:optimal}
  Under additional assumptions on $b$ it is possible to obtain 
  better convergence rates. For instance, in the case of a Brownian driver
  the method \eqref{eqn:RN_y} coincides with an implicit version of the
  Milstein scheme. Provided the vector field $b$ is sufficiently smooth, say $b
  \in \mathcal{C}_b^2$, it is known that the Milstein scheme
  converges pathwise with an order close to $1$. For instance, we refer to the
  standard monographs \cite{KP92, milstein1995, milstein2004}.

  For deterministic and $\frac{1}{p}$-H\"older continuous drivers $x$ the order
  of convergence $\frac{1}{p}$ is in general optimal. One way to improve the
  convergence rates is to artificially randomize the numerical method.
  We point the reader to the more detailed discussions in \cite{EK17,KW2017}
  and the references therein.  
\end{remark}

\section{Multiplicative noise: the rough path case}\label{sec:semiflow}

The multiplicative noise version of the equation \eqref{eqn:add_noise} would (naively) take the form
\begin{align}\label{eqn:mult_noise_ill-posed}
 \diff{y} = b(y)\diff{t} + \sigma(y) \diff x, \quad y(0) = \xi
\end{align}
where $x$ is an $\R^m$-valued path and $\sigma \colon \R^d \to L(\R^m,\R^d)$. However, this equation is ill-posed in the case when $x$ has low (H\"older-) regularity, and we have to use rough path theory to make sense of it. Thus, we look at the rough differential equation
\begin{align}\label{eqn:RDE}
 \diff{y} = b(y)\diff{t} + \sigma(y) \diff \mathbf{x}, \quad y(0) = \xi
\end{align}
instead where $\mathbf{x}$ will be a suitable rough path. Let us first recall the basic notions from rough path theory which can be found e.g. in \cite{FH14}.

\begin{definition}\label{definition:rough_path}
 Let $p \in [2,3)$. A \emph{$\frac{1}{p}$-H\"older rough path} is a pair $\mathbf{x} = (x, \mathbb{X}) \colon [0,T]^2 \to \R^m \oplus (\R^m \otimes \R^m)$ for which the algebraic identy 
 \begin{align}\label{eqn:chen}
  \mathbb{X}_{s,t} - \mathbb{X}_{s,u} - \mathbb{X}_{u,t} = x_{s,u} \otimes x_{u,t}
 \end{align}
 holds for every $s,u,t \in [0,T]$, using the notation $x_{s,t}:=x(t)-x(s)$, and for which
 \begin{align*}
  \| \mathbf{x} \|_{1/p; [0, T]} := \sup_{0 \leq s < t \leq T} \frac{|x_{s,t}|}{|t-s|^{\frac{1}{p}}}  +  \sqrt{  \| \mathbb{X}\|_{2/p; [0, T]} } < \infty,
\end{align*}
where
\begin{align*}
\| \mathbb{X}\|_{2/p; [0, T]} := \sup_{0 \leq s < t \leq T} \frac{|\mathbb{X}_{s,t}|}{|t-s|^{\frac{2}{p}}}.
\end{align*}
Here we write as $ \|\cdot \|_{1/p; [0, T]}$ to distinguish from $ \|\cdot \|_{\frac{1}{p}; [0, T]}$ introduced in Section \ref{sec: pre}. $\mathbf{x}$ is called \emph{geometric} if
\begin{align}
 \operatorname{Sym}(\mathbb{X}_{s,t}) = \frac{1}{2} x_{s,t} \otimes x_{s,t}
\end{align}
holds for every $s,t \in [0,T]$. If $\mathbf{x} = (x, \mathbb{X})$ and $\mathbf{y} = (y, \mathbb{Y})$ are two rough paths, their distance will be measured via the metric
\begin{align*}
 \varrho_{1/p}( \mathbf{x}, \mathbf{y}) := \sup_{0 \leq s < t \leq T} \frac{|x_{s,t} - y_{s,t}|}{|t-s|^{\frac{1}{p}}}  +  \sup_{0 \leq s < t \leq T} \frac{|\mathbb{X}_{s,t} - \mathbb{Y}_{s,t}|}{|t-s|^{\frac{2}{p}}}.
\end{align*}

\end{definition}

It is possible to give a definition of a $p$-variation rough path too, but we will only consider the H\"older case here for simplicity. The object $\mathbb{X}_{s,t}$ for $s<t$ should be thought of the the second iterated integral \mbox{$\int_{s < u < v < t} \diff x_u \otimes \diff x_v$}. Indeed, if $x$ is smooth (e.g. $\frac{1}{p}$-H\"older for $p \in [1,2)$), we can define $\mathbb{X}$ as the second iterated Young integral \cite{You36} and one can show that $(x,\mathbb{X})$ satisfies the conditions stated in Definition \ref{definition:rough_path} and defines a geometric rough path. However, for $p \geq 2$, we are not able to use Young's theory anymore, and we have to \emph{assume} that $\mathbb{X}$ exists. 

\begin{definition}
 Let $x \colon [0,T] \to \R^m$ be a $\frac{1}{p}$-H\"older path. If $W$ is a finite dimensional vector space, a path $y \colon [0,T] \to W$ is called \emph{controlled by $x$} if there exists a $\frac{1}{p}$-H\"older path $y' \colon [0,T] \to L(\R^m,W)$ such that for $R_{s,t} := y_{s,t} - y'(s) x_{s,t}$, one has
 \begin{align*}
  \| R \|_{2/p; [0,T]} := \sup_{0 \leq s < t \leq T} \frac{ |R_{s,t}|}{|t-s|^{\frac{2}{p}}} < \infty.
 \end{align*}
 The path $y'$ is called a \emph{Gubinelli derivative} of $y$. 
\end{definition}
One can show that the space of controlled paths with the norm
 \begin{align*}
  \|y,y'\|_{x,\frac{2}{p};[0,T]} := \|y'\|_{\frac{1}{p}; [0,T]} + \|R\|_{2/p;[0,T]}
 \end{align*}
 is a Banach space. Moreover, if $(y,y')$ is controlled by $x$ and $\sigma$ is a sufficiently smooth function, the path $t \mapsto \sigma(y(t))$ is again controlled by $x$ with Gubinelli derivative $\sigma(y(t))' = D \sigma(y(t))y'(t)$ \cite[Lemma 7.3]{FH14}.

\begin{theorem}
 Let $\mathbf{x} = (x,\mathbb{X}$) be a $\frac{1}{p}$-H\"older rough path and $(y,y')$ be controlled by $x$. Then the rough integral
 \begin{align*}
  \int_0^T y(t) \diff \mathbf{x}(t)
 \end{align*}
 exists as the limit of Riemann sums of the form $\sum y(t_i)x_{t_i,t_{i+1}} + y'(t_i) \mathbb{X}_{t_i,t_{i+1}}$.

\end{theorem}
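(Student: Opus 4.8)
The plan is to prove existence of the rough integral $\int_0^T y\diff\mathbf{x}$ via the Sewing Lemma, which is the standard route in \cite{FH14}. First I would introduce the one-parameter family of ``candidate increments'' $\Xi_{s,t} := y(s)x_{s,t} + y'(s)\mathbb{X}_{s,t}$ for $(s,t)\in\Delta_T$, so that the Riemann sums in the statement are exactly $\sum_i \Xi_{t_i,t_{i+1}}$ over a partition of $[0,T]$. The Sewing Lemma asserts that if $\delta\Xi_{s,u,t} := \Xi_{s,t} - \Xi_{s,u} - \Xi_{u,t}$ satisfies $|\delta\Xi_{s,u,t}| \lesssim |t-s|^{\theta}$ for some $\theta > 1$, then there is a unique (up to an additive constant) path $t\mapsto I(t)$ with $|I(t) - I(s) - \Xi_{s,t}| \lesssim |t-s|^{\theta}$, and moreover $I(t) - I(s)$ is the limit of the Riemann sums $\sum_i \Xi_{t_i,t_{i+1}}$ as the mesh of the partition tends to zero. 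Setting $\int_0^T y\diff\mathbf{x} := I(T) - I(0)$ then gives exactly the claimed object.

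The key computation is the estimate on $\delta\Xi$. Using $\Xi_{s,t} = y(s)x_{s,t} + y'(s)\mathbb{X}_{s,t}$ together with the additivity $x_{s,t} = x_{s,u} + x_{u,t}$ and Chen's relation \eqref{eqn:chen}, a direct expansion gives
\begin{align*}
 \delta\Xi_{s,u,t} = -\big(y(u) - y(s) - y'(s)x_{s,u}\big)x_{u,t} - \big(y'(u) - y'(s)\big)\mathbb{X}_{u,t} = -R_{s,u}\,x_{u,t} - y'_{s,u}\,\mathbb{X}_{u,t},
\end{align*}
where $R$ is the remainder of the controlled path $(y,y')$ and $y'_{s,u} = y'(u) - y'(s)$. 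Now I would bound the two terms using the controlled-path norm and the rough path norm: $|R_{s,u}| \le \|R\|_{2/p}|u-s|^{2/p}$ and $|x_{u,t}| \le \|x\|_{1/p}|t-u|^{1/p}$ give a term of order $|t-s|^{3/p}$, while $|y'_{s,u}| \le \|y'\|_{1/p}|u-s|^{1/p}$ and $|\mathbb{X}_{u,t}| \le \|\mathbb{X}\|_{2/p}|t-u|^{2/p}$ again give order $|t-s|^{3/p}$. Since $p < 3$ we have $\theta := 3/p > 1$, so the hypothesis of the Sewing Lemma is verified, with the implied constant controlled by $\|\mathbf{x}\|_{1/p}$ and $\|y,y'\|_{x,2/p}$.

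The only genuine obstacle is having the Sewing Lemma itself available; everything else is the short algebraic identity for $\delta\Xi$ and two applications of the triangle inequality. I would either cite the Sewing Lemma directly from \cite[Lemma 4.2]{FH14} or, if a self-contained argument is preferred, sketch it: define $I^{(n)}(t) := \sum \Xi_{s_i,s_{i+1}}$ over the dyadic partition of $[s,t]$ at level $n$, show $|I^{(n+1)}(t) - I^{(n)}(t)| \lesssim 2^{-n(\theta-1)}|t-s|^{\theta}$ by telescoping over the refinement and using the $\delta\Xi$ bound at each inserted midpoint, conclude $I^{(n)}$ converges uniformly, and check the limit is independent of the chosen partition sequence by interleaving two partitions. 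Because $\theta - 1 = 3/p - 1 > 0$ this geometric series converges, which is exactly where the restriction $p \in [2,3)$ enters. One should also remark that the limit does not depend on the choice of Gubinelli derivative up to the equivalence class, but for the bare existence statement as phrased this is not needed.
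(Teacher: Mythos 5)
Your argument is correct and is exactly the standard sewing-lemma proof of \cite[Theorem 4.10]{FH14}, which is all the paper gives (its proof is a bare citation of that result). The algebraic identity $\delta\Xi_{s,u,t} = -R_{s,u}\,x_{u,t} - y'_{s,u}\,\mathbb{X}_{u,t}$ and the resulting $|t-s|^{3/p}$ bound with $3/p>1$ are precisely the verification needed, so there is nothing to add.
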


\begin{proof}
 \cite[Theorem 4.10]{FH14}.
\end{proof}

\begin{definition}
 A path $y \colon [0,T] \to \R^d$ is called a \emph{solution to the rough differential equation}
\begin{align}
 \diff{y} = b(y)\diff{t} + \sigma(y) \diff \mathbf{x}, \quad y(0) = \xi
\end{align}
if
\begin{align*}
 y(t) = \xi + \int_0^t b(y(s))\diff s + \int_0^t \sigma(y(s))\diff \mathbf{x}(s)
\end{align*}
holds for all $t \in [0,T]$ where the second integral makes sense as a rough integral.
\end{definition}

The question whether \eqref{eqn:RDE} possesses a solution will obviously depend on the coefficients. Next, we will define a space of functions $\sigma$ which will be important for us.

\begin{definition}
 For a function $\sigma \colon \R^d \to L(\R^m, \R^d)$, we denote by $D^k \sigma$ the $k$-th Fr\'echet derivative, $k \in \N_0$. The space of all $k$-times Fr\'echet differentiable functions will be denoted by $\mathcal{C}^k_{\text{loc}}$. The space $\mathcal{C}^k_{b}$ consists of all functions $\sigma \in \mathcal{C}^k_{\text{loc}}$ for which all derivatives and the function itself are bounded. Set 
 \begin{align*}
  \mathcal{C}^{\infty}_{\text{loc}} := \bigcap_{k \in \N} \mathcal{C}^{k}_{\text{loc}} \quad \text{and} \quad \mathcal{C}^{\infty}_b := \bigcap_{k \in \N} \mathcal{C}^{k}_b .
 \end{align*}
We will also use the notation
 \begin{align*}
  \| \sigma \|_{\mathcal{C}^k} := \sum_{i = 0}^k \| D^i \sigma \|_{\infty; \R^d}
 \end{align*}
 and
 \begin{align*}
  \| \sigma \|_{\mathcal{C}^k; B} := \sum_{i = 0}^k \| D^i \sigma \|_{\infty; B}
 \end{align*}
 for any subset $B \subset \R^d$.
\end{definition}

In many situations, \eqref{eqn:RDE} defines a continuous flow or at least a semiflow. We recall the definition below.

\begin{definition}
 Let $I \subset \R$ be an interval and $\mathcal{X}$ be a set. A \emph{semiflow} is a map
 \begin{align*}
  \phi \colon \{(s,t)\, :\, s\leq t, \ s,t, \in I \} \times \mathcal{X} \to \mathcal{X}
 \end{align*}
 which satisfies the following two properties:
 \begin{itemize}
  \item[(i)] $\phi(t,t,\cdot) = \operatorname{Id}_{\mathcal{X}}$ for all $t \in I$, where $\operatorname{Id}_{\mathcal{X}}:\mathcal{X} \to \mathcal{X}$ is the identity operator.
  \item[(ii)] $\phi(u,t,\cdot) \circ \phi(s,u,\cdot) = \phi(s,t,\cdot)$ for all $s \leq u \leq t$, $s,u,t \in I$.
 \end{itemize}
 If $\phi \colon I \times I \times \mathcal{X} \to \mathcal{X}$ and property (ii) holds for all $s,t,u \in I$, we call $\phi$ a \emph{flow}. If $\mathcal{X}$ is a topological space and $\phi$ is in addition continuous in all its parameters with respect to the product topology, we call it a \emph{continuous (semi)flow}.

\end{definition}

\begin{proposition}\label{prop:lipschitz_flow}
 Let $\mathbf{x} = (x, \mathbb{X})$ be a $\frac{1}{p}$-H\"older rough path, $p \in [2,3)$, and assume $\| \mathbf{x} \|_{\frac{1}{p};[0,T]} \leq C_0$. 
 Let $b \colon \R^d \to \R^d$ be bounded and Lipschitz continuous and let $\sigma \in \mathcal{C}^{3}_{b}$.
Then \eqref{eqn:RDE} has a unique solution $(y,y')$ in the space of controlled paths with $y'(t) = \sigma(y(t))$. Moreover, the equation induces a continuous flow  $\phi$ and there are constants $\delta > 0$ and $L > 0$ depending on $p$, $C_0$, $T$, $b$ and $\sigma$ such that 
 \begin{align}\label{eqn:loc_lip_estim}
  |\phi(s,t,\zeta_1) - \phi(s,t,\zeta_2)| \leq  (1 + L|t-s|^{\frac{1}{p}}) |\zeta_1 - \zeta_2|
 \end{align}
holds for all $s < t \in [0,T]$ with $|t-s| \leq \delta$ and all $\zeta_1, \zeta_2 \in \R^d$. 
\end{proposition}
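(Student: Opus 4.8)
The plan is to combine the classical well-posedness theory for rough differential equations with bounded, smooth coefficients (which gives existence, uniqueness, and continuity of the flow) with a careful local Lipschitz estimate of the flow in the initial condition. First I would invoke the standard fixed-point argument in the Banach space of controlled paths $(y,y')$ with $y' = \sigma(y)$: since $b$ is bounded and Lipschitz and $\sigma \in \mathcal{C}^3_b$, the map $(y,y') \mapsto \big(\xi + \int_0^\cdot b(y)\,\diff s + \int_0^\cdot \sigma(y)\,\diff\mathbf{x},\, \sigma(y)\big)$ is a contraction on a small enough time interval $[0,\delta_0]$, uniformly over the initial point, with $\delta_0$ depending only on $p$, $C_0$, $\|b\|_\infty$, $\operatorname{Lip}(b)$ and $\|\sigma\|_{\mathcal{C}^3}$; boundedness of the coefficients prevents blow-up, so the local solutions can be concatenated to a global solution on $[0,T]$. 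The flow property (i)--(ii) is then immediate from uniqueness, and continuity of $\phi$ in all parameters follows from the continuity of the Itô--Lyons map (e.g. \cite[Theorem 8.5]{FH14}), including continuous dependence on $\mathbf{x}$ and on the starting time, which is handled by viewing $\phi(s,t,\zeta)$ as the time-$t$ value of the solution started at time $s$ from $\zeta$ driven by the shifted rough path.

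The heart of the matter is the quantitative estimate \eqref{eqn:loc_lip_estim}. I would fix $s < t$ with $|t-s| \leq \delta$ ($\delta \leq \delta_0$ to be chosen), set $y^{(i)} := \phi(s,\cdot,\zeta_i)$ with Gubinelli derivatives $\sigma(y^{(i)})$, and estimate the difference in the controlled-path norm on $[s,t]$. Writing $z := y^{(1)} - y^{(2)}$, $z' := \sigma(y^{(1)}) - \sigma(y^{(2)})$, and $R^z$ for the associated remainder, one gets from the fixed-point equation and the standard estimates for rough integrals (the "sewing" bounds, cf. \cite[Theorem 4.10, Lemma 7.3]{FH14}) an inequality of the shape
\begin{align*}
 \|z,z'\|_{x,2/p;[s,t]} + |z'(s)| \leq C\, |t-s|^{\alpha}\, \big(\|z,z'\|_{x,2/p;[s,t]} + |z(s)|\big)
\end{align*}
for some $\alpha > 0$ (one may take $\alpha = \tfrac{1}{p}$ after possibly shrinking exponents, using that $b$ is merely Lipschitz contributes a term of order $|t-s|$, which is dominated). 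For $\delta$ small enough that $C\delta^{\alpha} \leq \tfrac12$, absorbing the left-hand occurrence gives $\|z,z'\|_{x,2/p;[s,t]} + |z'(s)| \leq 2C|t-s|^{\alpha}|z(s)|$. Finally, since $z_{s,t} = z'(s)x_{s,t} + R^z_{s,t}$ with $|z'(s)| \leq \operatorname{Lip}(\sigma)|z(s)|$ and $|R^z_{s,t}| \leq \|R^z\|_{2/p}|t-s|^{2/p} \leq 2C|t-s|^{\alpha + 2/p}|z(s)|$, we obtain
\begin{align*}
 |z(t)| \leq |z(s)| + |z'(s)|\,|x_{s,t}| + |R^z_{s,t}| \leq \big(1 + L|t-s|^{\frac{1}{p}}\big)|z(s)|
\end{align*}
for a suitable $L$ (absorbing $C_0$, $\operatorname{Lip}(\sigma)$, $C$, and the higher-order term using $|t-s| \leq \delta \leq T$), which is exactly \eqref{eqn:loc_lip_estim} with $|z(s)| = |\zeta_1 - \zeta_2|$.

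I expect the main obstacle to be bookkeeping the constants in the contraction/difference estimate so that the exponent on $|t-s|$ in the final bound is exactly $\tfrac{1}{p}$ (and not, say, $\tfrac{2}{p} - 1$ or a mixture), and making sure $\delta$ and $L$ depend only on the stated data. The subtlety is that the naive difference estimate for rough integrals produces several terms with different powers of $|t-s|$ ($|t-s|^{1/p}$, $|t-s|^{2/p}$, $|t-s|$, $|t-s|^{3/p}$); since all of these are bounded by a constant times $|t-s|^{1/p}$ once $|t-s| \leq \delta \leq 1$ (using $p \geq 2$ so $\tfrac{1}{p} \leq \tfrac{2}{p}$ etc.), this is a matter of careful but routine estimation rather than a genuine difficulty. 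An alternative, cleaner route that sidesteps some of this is to differentiate the flow: under the smoothness hypotheses the Jacobian $J_{s,t} = D_\zeta \phi(s,t,\zeta)$ solves a linear RDE, and $\|J_{s,t} - \operatorname{Id}\| \leq L|t-s|^{1/p}$ follows directly from the linear estimate; then \eqref{eqn:loc_lip_estim} is the mean value inequality. I would present whichever of these is shorter given the conventions already fixed in the paper, most likely citing the relevant estimates from \cite{FH14} rather than reproving them.
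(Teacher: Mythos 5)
Your proposal follows essentially the same route as the paper: a contraction/fixed-point argument with concatenation of local solutions for global existence, an a priori estimate for the difference of two solutions in the controlled-path norm obtained from the sewing/rough-integral bounds of \cite[Theorems 4.10 and 7.5]{FH14}, absorption of the self-referential term for $|t-s|\leq\delta$, and finally expansion of $z_{s,t}=z'(s)x_{s,t}+R^z_{s,t}$ to extract the factor $(1+L|t-s|^{1/p})$. One display is, however, false as written: the inequality
\begin{align*}
\|z,z'\|_{x,\frac{2}{p};[s,t]}+|z'(s)|\leq C\,|t-s|^{\alpha}\bigl(\|z,z'\|_{x,\frac{2}{p};[s,t]}+|z(s)|\bigr)
\end{align*}
cannot hold, because the left-hand side contains $|z'(s)|=|\sigma(\zeta_1)-\sigma(\zeta_2)|$, which does not vanish as $t\downarrow s$ while the right-hand side does. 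The correct shape (and the one the paper derives) is $\|z,z'\|_{x,\frac{2}{p};[s,t]}\leq C|t-s|^{\alpha}\|z,z'\|_{x,\frac{2}{p};[s,t]}+C|z(s)|$, so after absorption one obtains only $\|z,z'\|_{x,\frac{2}{p};[s,t]}\leq 2C|z(s)|$, \emph{without} the extra factor $|t-s|^{\alpha}$. This does not damage your conclusion: in the last step you bound $|z'(s)|$ by $\|\sigma\|_{\mathcal{C}^1}|z(s)|$ directly, and $|R^z_{s,t}|\leq\|R^z\|_{2/p;[s,t]}|t-s|^{\frac{2}{p}}\leq 2C|t-s|^{\frac{2}{p}}|z(s)|$ still carries the required power $|t-s|^{1/p}$ since $p\geq 2$ and $|t-s|\leq\delta$. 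With that correction the argument is sound and coincides with the paper's proof; the alternative via the Jacobian of the flow that you sketch at the end is not the route taken in the paper.
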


\begin{proof}

Proving that \eqref{eqn:RDE} has a unique solution for every initial condition $\xi$ is very similar to \cite[Theorem 8.4]{FH14}: We first define
\begin{align*}
 \mathcal{M}(y,y') := \left( \xi + \int_0^{\cdot} b(y(s))\diff s + \int_0^{\cdot} \sigma(y(s))\diff \mathbf{x}(s) , \sigma(y(\cdot)) \right)
\end{align*}
which is a mapping from the space of controlled paths to itself. Next, one can prove that this mapping is a contraction on a sufficiently small time interval $[0,T_0]$ which yields a fixed point, i.e. a solution. We can repeat the argument on the interval $[T_0,T_1]$, $[T_1,T_2]$ and so on. Using boundedness of $\sigma$ and $b$, the length of these intervals can be bounded from below, therefore we can eventually construct solutions on every given time interval $[0,T]$ by gluing these solutions together.

It remains to prove the estimate \eqref{eqn:loc_lip_estim}. We start with a bound for the solution to \eqref{eqn:RDE}. Set 
 \begin{align}\label{eqn:Rdefine}
  R^{y}_{u,v} := y_{u,v} - y'(u) x_{u,v} = \int_u^v  b(y(w)) \diff w + \int_u^v \sigma(y(w))\, \diff \mathbf{x}(w) - \sigma(y(u))x_{u,v}.
 \end{align}
 We claim that there is a constant $C_1$ depending on the parameters above such that
 \begin{align}\label{eqn:loc_unif_bound_sol}
  \| y,y'\|_{x,\frac{2}{p};[s,t]} = \|y'\|_{\frac{1}{p};[s,t]} + \|R^y\|_{2/p;[s,t]} \leq C_1
 \end{align}
 for all $s<t$ with $|t-s| \leq \delta$. To see this, note first that
 \begin{align*}
  \|y'\|_{\frac{1}{p};[s,t]} = \|\sigma(y)\|_{\frac{1}{p};[s,t]} \leq \| \sigma \|_{\mathcal{C}^1} \|y\|_{\frac{1}{p};[s,t]}.
 \end{align*}
  For the remainder, we have 
 \begin{align*}
  \| R^{y} \|_{2/p ; [s,t]} \leq |t-s|^{1-\frac{2}{p}} \|b\|_{\infty} + \sup_{s \leq u < v \leq t} \frac{\left| \int_u^v \sigma(y(w))\, \diff \mathbf{x}(w) - \sigma(y(u))x_{u,v} \right| }{ |v-u|^{\frac{2}{p}}}.
 \end{align*}
 We can estimate the rough integral using \cite[Theorem 4.10]{FH14}: For $s \leq u \leq v \leq t$,
 \begin{align*}
  \left| \int_u^v \sigma(y(w))\, \diff \mathbf{x}(w) - \sigma(y(u)) x_{u,v} \right| &\leq \left| \int_u^v \sigma(y(w))\, \diff \mathbf{x}(w) - \sigma(y(u)) x_{u,v} - (\sigma(y(u)))' \mathbb{X}_{u,v} \right| + \left| (\sigma(y(u)))' \mathbb{X}_{u,v} \right| \\
   &\leq C \left( \| x \|_{\frac{1}{p};[u,v]} \|R^{\sigma(y)} \|_{2/p;[u,v]} + \| \mathbb{X}\|_{2/p;[u,v]} \|\sigma(y)'\|_{\frac{1}{p};[u,v]} \right) |v-u|^{\frac{3}{p}} \\
   &\quad + \|\sigma\|_{\mathcal{C}^1}  \| \mathbb{X} \|_{2/p;[u,v]} |v-u|^{\frac{2}{p}}
 \end{align*}
 where $C$ depends on $p$ and $T$. We used here that $\sigma(y)' = D \sigma(y) y' = D \sigma(y) \sigma(y)$. Using this identity again, we also obtain
 \begin{align*}
   \|\sigma(y)'\|_{\frac{1}{p};[u,v]} \leq \|\sigma\|_{\mathcal{C}^3} \|y\|_{\frac{1}{p};[u,v]}.
 \end{align*}
 As in \cite[Lemma 7.3]{FH14}, one can show that
 \begin{align*}
  \|R^{\sigma(y)} \|_{2/p;[u,v]} \leq \frac{1}{2} \| \sigma \|_{\mathcal{C}^2} \|y\|_{\frac{1}{p};[u,v]}^2 +  \| \sigma \|_{\mathcal{C}^1} \|R^{y} \|_{2/p;[u,v]}.
 \end{align*}
 Putting these estimates together, we see that there is a constant $C$ depending on the claimed parameters such that
 \begin{align*}
  \| R^{y} \|_{2/p ; [s,t]} \leq C + C|t-s|^{\frac{1}{p}} \|R^{y} \|_{2/p;[s,t]}.
 \end{align*}
 Choosing $\delta = \frac{1}{(2C)^p}$ yields a uniform bound for $\| R^{y} \|_{2/p; [s,t]}$ and thus also for $\| y,y'\|_{x,\frac{2}{p};[s,t]}$ as claimed. We proceed with proving \eqref{eqn:loc_lip_estim}. Let $s \in [0,T]$, $\zeta_1, \zeta_2 \in \R^d$ and 
 \begin{align*}
  y^i(t) = \zeta_i + \int_s^t b(y^i(u))\, \diff u + \int_s^t \sigma(y^i(u))\, \diff \mathbf{x}(u); \quad t \in [s,T]
 \end{align*}
 for $i = 1,2$. We will first give an estimate for the H\"older norm of $R^{y^1} - R^{y^2}$ where 
 \begin{align*}
  R^{y^i}_{u,v} := y^i_{u,v} - (y^i)'(u) x_{u,v} = \int_u^v b(y^i(w)) \diff w + \int_u^v \sigma(y^i(w)) \diff \mathbf{x}(w) - \sigma(y^i(u))x_{u,v}, \quad i = 1,2.
 \end{align*}

 Using the estimate \cite[Theorem 4.10]{FH14} for the rough integral, it is straightforward to show that
 \begin{align}
  \begin{split}\label{eqn:est_diff_R}
  \|R^{y^1} - R^{y^2}\|_{2/p;[s,t]} &\leq \|b\|_{\mathrm{Lip}} \|y^1 - y^2\|_{\infty;[s,t]}|t-s|^{1-\frac{2}{p}} \\
  &\quad + C \left( \|R^{\sigma(y^1)} - R^{\sigma(y^2)} \|_{2/p;[s,t]} + \|\sigma(y^1)' - \sigma(y^2)' \|_{\frac{1}{p};[s,t]} \right) |t-s|^{\frac{1}{p}} \\
  &\quad + C \|D \sigma (y^1)\sigma(y^1) - D \sigma (y^2)\sigma(y^2)\|_{\infty;[s,t]}
  \end{split}
 \end{align}
 where $C$ is a constant depending on the parameters stated above. Clearly,
 \begin{align*}
   \|y^1 - y^2 \|_{\infty;[s,t]} \leq C|\zeta_1 - \zeta_2| + C|t-s|^{\frac{1}{p}} \|y^1 - y^2\|_{\frac{1}{p};[s,t]}.
 \end{align*}
 From \cite[Theorem 7.5]{FH14}, 
 \begin{align*}
   &\|R^{\sigma(y^1)} - R^{\sigma(y^2)} \|_{2/p;[s,t]} +  \|\sigma(y^1)' - \sigma(y^2)' \|_{\frac{1}{p};[s,t]} \\
  \leq\ &C \left( \|R^{y^1} - R^{y^2} \|_{2/p;[s,t]} + \|(y^1)' - (y^2)'\|_{\frac{1}{p};[s,t]} + |(y^1)'(s) - (y^2)'(s)| + |\zeta_1 - \zeta_2| \right)
 \end{align*}
 where we used the uniform bounds obtained in \eqref{eqn:loc_unif_bound_sol}. Next,
 \begin{align*}
  |(y^1)'(s) - (y^2)'(s)| = |\sigma(y^1(s)) - \sigma(y^2(s))| = |\sigma(\zeta_1) - \sigma(\zeta_2)| \leq C|\zeta_1 - \zeta_2|
 \end{align*}
 and
 \begin{align*}
  \|(y^1)' - (y^2)'\|_{\frac{1}{p};[s,t]} \leq C \|y^1 - y^2\|_{\frac{1}{p};[s,t]}.
 \end{align*}
  For the last term in \eqref{eqn:est_diff_R}, we see that
  \begin{align*}
  \|D \sigma(y^1)\sigma(y^1) - D \sigma(y^2)\sigma(y^2)\|_{\infty;[s,t]} \leq C \|y^1 - y^2 \|_{\infty;[s,t]} \leq C |\zeta_1 - \zeta_2| + C|t-s|^{\frac{1}{p}} \|y^1 - y^2\|_{\frac{1}{p};[s,t]}.
  \end{align*}
   From $y^i_{s,t} = \sigma(\zeta_i) x_{s,t} + R^{y^i}_{s,t}$, we also have
 \begin{align*}
  \|y^1 - y^2\|_{\frac{1}{p};[s,t]} \leq C|\zeta_1 - \zeta_2| + |t-s|^{\frac{1}{p}} \|R^{y^1} - R^{y^2}\|_{2/p;[s,t]}.
 \end{align*}
   Putting all these pieces together, we arrive at an estimate of the form 
 \begin{align*}
   &\|y^1 - y^2\|_{\frac{1}{p};[s,t]} + \| R^{y^1} - R^{y^2} \|_{2/p;[s,t]} \\
   \leq\ & C |t-s|^{1-\frac{1}{p}} \|y^1 - y^2 \|_{\frac{1}{p};[s,t]} + C |t-s|^{1-\frac{2}{p}} |\zeta_1 - \zeta_2| + C |\zeta_1 - \zeta_2| \\
   &\quad + C|t-s|^{\frac{1}{p}} \left(  \| R^{y^1} - R^{y^2} \|_{2/p;[s,t]} + \|y^1 - y^2 \|_{\frac{1}{p};[s,t]} + |\zeta_1 - \zeta_2| \right).
 \end{align*}
 Choosing $\delta > 0$ smaller if necessary, we obtain
 \begin{align*}
  \|y^1 - y^2\|_{\frac{1}{p};[s,t]} + \| R^{y^1} - R^{y^2} \|_{2/p;[s,t]} \leq  C |\zeta_1 - \zeta_2|.
 \end{align*}
 Now we have for $t-s \leq \delta$,
 \begin{align*}
  |y^1_t - y^2_t| &\leq  |\zeta_1 - \zeta_2| + |t-s|^{\frac{1}{p}} \|y^1 - y^2\|_{\frac{1}{p};[s,t]}  \leq  (1 + C|t-s|^{\frac{1}{p}}) |\zeta_1 - \zeta_2| \\
 \end{align*}
 which was our claim. 
\end{proof}

Next, we define the schemes we will be interested in. Fix a $\frac{1}{p}$-H\"older rough path $(x,\mathbb{X})$. By Lyons' Extension theorem \cite[Theorem 3.7]{LCL07}, there exists a unique element $\mathbb{X}^3 \colon [0,T]^2 \to (\R^m)^{\otimes 3}$ which satisfies
\begin{align*}
 \| \mathbb{X}^3 \|_{3/p;[0,T]} = \sup_{0 \leq s < t \leq T} \frac{|\mathbb{X}^3_{s,t}|}{|t-s|^{\frac{3}{p}}} < \infty
\end{align*}
and
\begin{align*}
  \mathbb{X}^3_{s,t} - \mathbb{X}^3_{s,u} - \mathbb{X}^3_{u,t} = \mathbb{X}_{s,u} \otimes x_{u,t} + x_{s,u} \otimes \mathbb{X}_{u,t} 
\end{align*}
for every $s,u,t \in [0,T]$. In the sequel, we will often just speak of a $\frac{1}{p}$-H\"older rough path $\mathbf{x} = (x,\mathbb{X}^2,\mathbb{X}^3)$ where we set $\mathbb{X}^2 := \mathbb{X}$.

Note that we can view $\sigma \colon \R^d \to L(\R^m,\R^d)$ as a collection of vector fields $\sigma = (\sigma_1,\ldots,\sigma_m)$ where $\sigma_i \colon \R^d \to \R^d$ for any $i = 1, \ldots, m$. Recall that vector fields are in one-to-one corresponce with first order differential operators: if $V = (V^1,\ldots,V^d)$ is a vector field, the corresponding first order differential operator is defined by 
\begin{align*}
 V \varphi(\zeta) = \sum_{i = 1}^d V^i(\zeta) \partial_i \varphi(\zeta)
\end{align*}
for a differentiable function $\varphi$. If $V$ and $W$ are vector fields, $VW$ denotes the second order differential operator obtained by applying $W$ and $V$ consecutively. 

\begin{definition}\label{definition:impl_schemes}

We fix an equidistant partition $\mathcal{T}^h$ of $[0,T]$ of the form 
\begin{align*}
  \mathcal{T}^h =\{t_0=0<t_1<\dots <t_r <\dots<t_{N_h}=T\}\ \ \mbox{with\
  } t_r = r h
\end{align*}
where the step size $h \in (0,T]$ is determined by $h = \frac{T}{N_h}$, $N_h \in \N$. 
 Let $\mathbf{x} = (x, \mathbb{X}^2, \mathbb{X}^3 )$
 be a $\frac{1}{p}$-H\"older rough path. We define three numerical approximations $(y^l_r)$, $l = 1,2,3$, as follows:
\begin{align}\label{eqn:implicit_Euler}
  y^1_{r+1} = y^1_r + h b(y^1_{r+1}) + \sum_{i = 1}^m \sigma_i(y^1_{r})x^i_{t_r,t_{r+1}}, 
\end{align}

\begin{align}\label{eqn:implicit_Milstein}
 y^2_{r+1} = y^2_r + h b(y^2_{r+1}) + \sum_{i = 1}^m \sigma_i(y^2_{r})x^i_{t_r,t_{r+1}} + \sum_{i,j = 1}^m \sigma_i \sigma_j \operatorname{Id}(y^2_{r}) \mathbb{X}_{t_r,t_{r+1}}^{2;i,j}
\end{align}
resp. 
\begin{align}\label{eqn:implicit_Milstein_order3}
  \begin{split}
 y^3_{r+1} &= y^3_r + h b(y^3_{r+1}) + \sum_{i = 1}^m \sigma_i(y^3_{r})x^i_{t_r,t_{r+1}} + \sum_{i,j = 1}^m \sigma_i \sigma_j \operatorname{Id}(y^3_{r}) \mathbb{X}_{t_r,t_{r+1}}^{2;i,j} \\
 &\quad + \sum_{i,j,k = 1}^m \sigma_i \sigma_j \sigma_k \operatorname{Id}(y^3_r) \mathbb{X}_{t_r,t_{r+1}}^{3;i,j,k}
  \end{split}
\end{align}
for $r \in \{0,\ldots,N_h-1\}$ with initial condition $y^1_0 = y^2_0 = y^3_0 = \xi$, provided solutions to these equations exist and are unique.
\end{definition}

\begin{remark}
 For the readers convenience, we spell out the short-hand notation used above in coordinates: for $\xi \in \R^d$,
 \begin{align*}
  \sigma_i \sigma_j \operatorname{Id}(\xi) &= \sum_{\alpha = 1}^d \sigma_i^{\alpha}(\xi) \partial_{\alpha} \sigma_j(\xi), \\
  \sigma_i \sigma_j \sigma_k \operatorname{Id}(\xi) &= \sum_{\beta = 1}^d \sum_{\alpha = 1}^d \sigma_i^{\beta}(\xi) \partial_{\beta} \sigma_j^{\alpha}(\xi) \partial_{\alpha} \sigma_k(\xi) + \sigma_i^{\beta}(\xi) \sigma_j^{\alpha}(\xi) \partial_{\beta} \partial_{\alpha} \sigma_k(\xi)
 \end{align*}
 where we used the product rule in line 2.

\end{remark}


\begin{theorem}
  \label{thm:well-posedness_mult}
  Let Assumption~\ref{as:drift} be satisfied with one-sided Lipschitz constant $C_b$. 
  If $C_b h < 1$ then there exist unique $\mathbb{R}^d$-valued sequences
  $(y^l_r)_{0 \leq r \leq N_h}$, $l = 1,2,3$, satisfying the difference equations \eqref{eqn:implicit_Euler}, \eqref{eqn:implicit_Milstein} and \eqref{eqn:implicit_Milstein_order3}.
\end{theorem}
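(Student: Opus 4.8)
The plan is to exploit the fact that all three schemes \eqref{eqn:implicit_Euler}, \eqref{eqn:implicit_Milstein} and \eqref{eqn:implicit_Milstein_order3} share exactly the same implicit structure: each of them can be rewritten as
$y^l_{r+1} - h b(y^l_{r+1}) = v^l_r$,
where $v^l_r \in \R^d$ is the sum of $y^l_r$ and all the remaining (explicit) Taylor terms. Hence the whole statement reduces, step by step, to the invertibility of the single map $\zeta \mapsto \zeta - h b(\zeta)$, which is precisely what was used in the proof of Theorem~\ref{thm:well-posedness}.

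Concretely, I would first define $G \colon \R^d \to \R^d$ by $G(\zeta) := \zeta - h b(\zeta)$ and observe, exactly as in the proof of Theorem~\ref{thm:well-posedness}, that Assumption~\ref{as:drift} yields
$\langle G(\varsigma) - G(\zeta), \varsigma - \zeta \rangle = |\varsigma-\zeta|^2 - h\langle b(\varsigma)-b(\zeta),\varsigma-\zeta\rangle \ge (1 - C_b h)|\varsigma - \zeta|^2$
for all $\varsigma,\zeta \in \R^d$. Since $C_b h < 1$, the constant $L_G := 1 - C_b h$ is strictly positive, so Proposition~\ref{prop:nonlinear} applies and shows that $G$ is a homeomorphism of $\R^d$ with Lipschitz continuous inverse.

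Then I would argue by induction on $r$. Starting from $y^l_0 = \xi$, suppose $y^l_r \in \R^d$ has been determined. In each of the three recursions, every term other than $y^l_{r+1}$ and $h b(y^l_{r+1})$ depends only on $y^l_r$ and on the fixed rough-path increments $x^i_{t_r,t_{r+1}}$, $\mathbb{X}^{2;i,j}_{t_r,t_{r+1}}$ and $\mathbb{X}^{3;i,j,k}_{t_r,t_{r+1}}$; denote their sum by $v^l_r \in \R^d$. The $r$-th step of the scheme is then equivalent to the equation $G(y^l_{r+1}) = v^l_r$, which by the previous paragraph admits the unique solution $y^l_{r+1} := G^{-1}(v^l_r)$. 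This simultaneously gives existence and uniqueness of the sequence $(y^l_r)_{0 \le r \le N_h}$ for each $l = 1,2,3$.

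I do not expect a genuine obstacle here: the argument is a direct extension of Theorem~\ref{thm:well-posedness}, the point being simply that the higher-order Taylor corrections appearing in \eqref{eqn:implicit_Milstein} and \eqref{eqn:implicit_Milstein_order3} are explicit and therefore do not affect the fixed-point structure. The only minor caveat to record is that the coefficients $\sigma_i\sigma_j\operatorname{Id}$ and $\sigma_i\sigma_j\sigma_k\operatorname{Id}$ must be well-defined, i.e.\ $\sigma$ is assumed smooth enough for the schemes to make sense in the first place; this is part of the definition of the methods and plays no further role in the well-posedness proof.
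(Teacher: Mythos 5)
Your proof is correct and is exactly the argument the paper intends: the paper's proof of this theorem is a one-line reference to Proposition~\ref{prop:nonlinear} via the same map $G(\zeta)=\zeta-hb(\zeta)$ used for Theorem~\ref{thm:well-posedness}, and your write-up simply makes the inductive step and the role of the explicit Taylor terms precise.
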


\begin{proof}
 As for Theorem \ref{thm:well-posedness}, this is just an application of Proposition \ref{prop:nonlinear}.
\end{proof}

In the next proposition, we calculate the local error of the schemes defined above.

\begin{proposition}\label{prop:1step_estimate}
 Let $\mathbf{x} = (x, \mathbb{X}^2, \mathbb{X}^3)$ be a $\frac{1}{p}$-H\"older rough path, $p \in [1,3)$, and choose $C_0$ such that $\| \mathbf{x} \|_{1/p;[0,T]} \leq C_0$. 
 Let $b \colon \R^d \to \R^d$ be bounded and Lipschitz continuous  with Lipschitz constant $L$ and let $\sigma \in \mathcal{C}^{3}_{b}$. Consider the solution $y$ to the rough differential equation
 \begin{align*}
  \diff{y} = b(y)\diff{t} + \sigma(y) \diff \mathbf{x}, \quad y(t_0) = y_0
 \end{align*}
 with $t_0 \in [0,T]$. For $0 < h < \frac{1}{L}$, set
 \begin{align*}
  y^1_1 = y_0 + h b(y^1_1) + \sum_{i = 1}^m \sigma_i(y_0)x^i_{t_0,t_0 + h}, 
\end{align*}

\begin{align*}
 y^2_1 = y_0 + h b(y^2_1) + \sum_{i = 1}^m \sigma_i(y_0)x^i_{t_0,t_0 + h} + \sum_{i,j = 1}^m \sigma_i \sigma_j \operatorname{Id}(y_0) \mathbb{X}_{t_0,t_0 + h}^{2;i,j}
\end{align*}
resp. 
\begin{align*}
  \begin{split}
 y^3_1 &= y_0 + h b(y^3_1) + \sum_{i = 1}^m \sigma_i(y_0)x^i_{t_0,t_0 + h} + \sum_{i,j = 1}^m \sigma_i \sigma_j \operatorname{Id}(y_0) \mathbb{X}_{t_0,t_0 + h}^{2;i,j} \\
 &\quad + \sum_{i,j,k = 1}^m \sigma_i \sigma_j \sigma_k \operatorname{Id}(y_0) \mathbb{X}_{t_0,t_0 + h}^{3;i,j,k}.
  \end{split}
\end{align*} 
 Then there exists a constant $C > 0$ depending on $p$, $C_0$, $T$, $b$ and $\sigma$ such that
 \begin{align}\label{eqn:onestep_1}
  |y(t_0 + h) - y_1^1| \leq Ch^{\frac{2}{p}}
 \end{align}
 in the case $p \in [1,2)$ and 
 \begin{align}\label{eqn:onestep_2}
  |y(t_0 + h) - y_1^2| \leq Ch^{\left(1 + \frac{1}{p}\right) \wedge \frac{3}{p}}
 \end{align}
 resp.
 \begin{align}\label{eqn:onestep_3}
  |y(t_0 + h) - y_1^3| \leq Ch^{1 + \frac{1}{p}}
 \end{align} 
 in the case $p \in [1,3)$. 

\end{proposition}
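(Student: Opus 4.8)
The plan is to prove each of the three local error estimates by comparing the exact solution $y$ on $[t_0,t_0+h]$ with a suitable \emph{Taylor expansion} of $y(t_0+h)$ around $y_0$, and then comparing that Taylor expansion with the implicit scheme value $y^l_1$. The first comparison is the heart of the matter: writing $y(t_0+h) = y_0 + \int_{t_0}^{t_0+h} b(y(w))\diff w + \int_{t_0}^{t_0+h}\sigma(y(w))\diff\mathbf{x}(w)$, I would expand the integrand $\sigma(y(\cdot))$ as a controlled path and use the rough integral estimate \cite[Theorem 4.10]{FH14} (exactly as in the proof of Proposition~\ref{prop:lipschitz_flow}) to show
\begin{align*}
 \int_{t_0}^{t_0+h}\sigma(y(w))\diff\mathbf{x}(w) = \sum_i \sigma_i(y_0)x^i_{t_0,t_0+h} + \sum_{i,j}\sigma_i\sigma_j\operatorname{Id}(y_0)\mathbb{X}^{2;i,j}_{t_0,t_0+h} + \sum_{i,j,k}\sigma_i\sigma_j\sigma_k\operatorname{Id}(y_0)\mathbb{X}^{3;i,j,k}_{t_0,t_0+h} + O(h^{4/p}),
\end{align*}
with the remainder after the $x$-term being $O(h^{2/p})$ and after the $\mathbb{X}^2$-term being $O(h^{3/p})$; this uses the uniform bounds on $\|y,y'\|_{x,2/p;[t_0,t_0+h]}$ and on the iterated remainders $R^{\sigma(y)}$, $R^{\sigma\sigma(y)}$ coming from $\sigma\in\mathcal{C}^3_b$ (one needs $\sigma\in\mathcal{C}^3_b$ precisely to get down to the $\mathbb{X}^3$ level). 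The drift integral contributes $\int_{t_0}^{t_0+h}b(y(w))\diff w = h\,b(y_0) + O(h^{1+1/p})$ since $b$ is Lipschitz and $y$ is $\frac1p$-Hölder.

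Next I would handle the implicit drift term. Since $0<h<1/L$, Theorem~\ref{thm:well-posedness_mult} (via Proposition~\ref{prop:nonlinear}) guarantees $y^l_1$ exists and, subtracting the defining equation from the Taylor expansion, the only structural difference in the drift is $h\,b(y^l_1)$ versus $h\,b(y_0)$ (for $l=1$) or versus $h\,b(y(t_0+h))$. Writing $y^l_1 = G^{-1}(\text{rest})$ with $G(\zeta)=\zeta-hb(\zeta)$ and using the Lipschitz bound $|G^{-1}(\varsigma)-G^{-1}(\zeta)|\le \frac{1}{1-Lh}|\varsigma-\zeta|$, together with $|b(y^l_1)-b(y_0)|\le L|y^l_1-y_0| = O(h^{1/p})$ (so $h|b(y^l_1)-b(y_0)| = O(h^{1+1/p})$), I absorb the implicit drift discrepancy into the stated error order: for $l=1$ it is dominated by the $O(h^{2/p})$ noise remainder when $p\ge 2$ — wait, the claim for $l=1$ is only stated for $p\in[1,2)$, where $h^{1+1/p}\le h^{2/p}$ fails in general, so here I instead note that for $p<2$ the Young-integral expansion gives remainder $O(h^{2/p})$ and $1+\tfrac1p \ge \tfrac2p \iff p\ge 1$, so the drift term $O(h^{1+1/p})$ is in fact of \emph{higher} order and harmless. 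For $l=2$ the target is $h^{(1+1/p)\wedge 3/p}$, which is exactly $\max$ of the drift-discrepancy order $h^{1+1/p}$ and the noise-expansion remainder $h^{3/p}$; for $l=3$ the target $h^{1+1/p}$ is the drift order, and one checks $3/p\ge 1+1/p\iff p\le 2$ while for $p\in[2,3)$ the noise remainder $h^{4/p}$ still beats $h^{1+1/p}$, so in all cases the noise part is subdominant.

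Finally I would assemble: $|y(t_0+h)-y^l_1| \le |G^{-1}(A) - G^{-1}(B)|$ where $A,B$ are the respective ``explicit parts'', bounded by $\frac{1}{1-Lh}\le 2$ times the difference of explicit parts, which is the sum of the noise-expansion remainder and, for $l=1$ only, the gap between $hb(y_0)$ and the exact drift integral; all pieces have been shown to be $O(h^{\text{target}})$ with constants depending only on $p,C_0,T,b,\sigma$. The main obstacle is the careful bookkeeping in the rough-integral expansion to the third level: one must iterate the controlled-path structure twice — $\sigma(y)$ is controlled by $x$ with Gubinelli derivative $D\sigma(y)\sigma(y)$, and $D\sigma(y)\sigma(y)$ is in turn controlled — and track that each application of \cite[Theorem 4.10]{FH14} on a subinterval of length $h$ produces the next iterated integral $\mathbb{X}^{k}$ plus a remainder of order $h^{(k+1)/p}$, using the Chen-type relation for $\mathbb{X}^3$ and the uniform a priori bounds from \eqref{eqn:loc_unif_bound_sol}. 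The drift side and the inversion of $G$ are routine given Proposition~\ref{prop:nonlinear} and the Lipschitz assumption on $b$.
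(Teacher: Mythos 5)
Your proposal follows essentially the same route as the paper's proof: a triangle-inequality decomposition into the drift Taylor error, the implicit-drift discrepancy, and the rough-integral Taylor remainder, with the latter controlled level by level via iterated (second-order) Gubinelli derivatives and the sewing estimate of [FH14, Theorem 4.10], followed by the same exponent comparisons ($1+\frac1p\ge\frac2p$ for $p\ge1$, $(1+\frac1p)\wedge\frac3p$ for the Milstein case, $\frac4p>1+\frac1p$ for $p<3$). The only cosmetic difference is that you invert $G(\zeta)=\zeta-hb(\zeta)$ and use its Lipschitz inverse (where your claim $\frac{1}{1-Lh}\le 2$ strictly requires $h\le\frac{1}{2L}$, a harmless restriction), whereas the paper bounds $h|b(y_1^l)-b(y_0)|$ directly using the boundedness of $b$; both are routine.
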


\begin{proof}

  To prove \eqref{eqn:onestep_1}, note that
 \begin{align*}
  |y(t_0 + h) - y_1^1| &\leq \left| \int_{t_0}^{t_0 + h} b(y(s)) - b(y(t_0))\, \diff s \right| + h |b(y_1^1) - b(y_0)| \\
  &\quad + \left| \int_{t_0}^{t_0 + h} (\sigma(y(s)) - \sigma(y(t_0))) \diff x(s)  \right|.
 \end{align*}
  For the first integral, we have
 \begin{align*}
  \left| \int_{t_0}^{t_0 + h} b(y(s)) - b(y(t_0))\, \diff s \right| \leq \|b\|_{\mathrm{Lip}} h^{1+\frac{1}{p}} \|y\|_{\frac{1}{p};[t_0,t_0 + h]} \leq C h^{1+\frac{1}{p}} 
 \end{align*}
 where we used that $\|y\|_{\frac{1}{p}}$ can be bounded by a constant $C$ depending on the stated parameters which can be deduced from \eqref{eqn:loc_unif_bound_sol}. For the second term, we use the bound
 \begin{align*}
   |b(y_1^1) - b(y_0)| \leq \|b\|_{\mathrm{Lip}} |y_1^1 - y_0| \leq \|b\|_{\mathrm{Lip}} |b(y_1^1)| h + \|b\|_{\mathrm{Lip}} |\sigma(y_0)| |x(t_0 + h) - x(t_0)| 
  \leq C(h + h^{\frac{1}{p}}).
 \end{align*}
 We can use the standard estimate for Young integrals \cite{You36} for the third term to obtain
  \begin{align*}
  \left| \int_{t_0}^{t_0 + h} (\sigma(y(s)) - \sigma(y(t_0))) \diff x(s)  \right|  \leq C h^{\frac{2}{p}} \|x\|_{\frac{1}{p};[t_0,t_0 + h]}  \|\sigma(y)\|_{\frac{1}{p};[t_0,t_0 + h]} \leq C h^{\frac{2}{p}}.
 \end{align*}
 Hence for a constant $C$,
 \begin{align*}
  |y(t_0 + h) - y_1^1| &\leq C\left(h^{\frac{2}{p}} + h^{1 + \frac{1}{p}} + h^{2} \right)\leq C h^{\frac{2}{p}}
 \end{align*}
 and \eqref{eqn:onestep_1} is shown. We proceed with \eqref{eqn:onestep_2}. By definition, 
 \begin{align*}
  D \sigma(y_0)\sigma(y_0) \mathbb{X}_{t_0,t_0 + h} = \sum_{i,j = 1}^m \sigma_i \sigma_j \operatorname{Id}(y_0)(\mathbb{X}_{t_0,t_0 + h}^{2;i,j}).
 \end{align*}
We have
 \begin{align*}
  |y(t_0 + h) - y_1^2| &\leq \left| \int_{t_0}^{t_0 + h} b(y(s)) - b(y(t_0))\, \diff s \right| + h |b(y_2^1) - b(y_0)| \\
  &\quad + \left| \int_{t_0}^{t_0 + h} \sigma(y(s))\, \diff \mathbf{x}(s)  - \sigma(y(t_0))(x(t_0 + h) - x(t_0)) - D \sigma(y(t_0)) \sigma(y(t_0)) \mathbb{X}_{t_0,t_0 + h} \right|
 \end{align*}
  where
   \begin{align*}
  \left| \int_{t_0}^{t_0 + h} b(y(s)) - b(y(t_0))\, \diff s \right| \leq \|b\|_{\mathrm{Lip}} h^{1+\frac{1}{p}} \|y\|_{\frac{1}{p};[t_0,t_0 + h]}
 \end{align*}
 and
 \begin{align*}
  |b(y_2^1) - b(y_0)| 
  &\leq \|b\|_{\mathrm{Lip}} \|b\|_{\infty} h +  \|b\|_{\mathrm{Lip}} \| \sigma\|_{\infty}  \|x\|_{\frac{1}{p};[t_0,t_0 + h]} h^{\frac{1}{p}} + \|b\|_{\mathrm{Lip}} \|\sigma\|_{\mathcal{C}^1}^2 \|\mathbb{X}\|_{2/p;[t_0,t_0 + h]} h^{\frac{2}{p}} \\
  &\leq C(h + h^{\frac{1}{p}} + h^{\frac{2}{p}}).
 \end{align*}
 It remains to estimate the rough integral. We already saw that $y$ is controlled by $x$ with Gubinelli derivative $y' = \sigma(y)$, and $\sigma(y)$ is controlled by $x$ with Gubinelli derivative $\sigma(y)' = D \sigma(y) \sigma(y)$. By \cite[Theorem 4.10]{FH14},
 \begin{align*}
  &\left| \int_{t_0}^{t_0 + h} \sigma(y(s))\, \diff \mathbf{x}(s)  - \sigma(y(t_0))(x(t_0 + h) - x(t_0)) - D \sigma(y(t_0)) \sigma(y(t_0)) \mathbb{X}_{t_0,t_0 + h} \right| \\
  \leq\ &C h^{\frac{3}{p}} \left( \|x\|_{\frac{1}{p};[t_0,t_0 + h]} \|R^{\sigma(y)} \|_{2/p;[t_0,t_0 + h]} + \|\mathbb{X}\|_{2/p;[t_0,t_0 + h]} \|\sigma(y)\|_{\frac{1}{p};[t_0,t_0 + h]} \right).
 \end{align*}
 For $R^{\sigma(y)}$, note that
 \begin{align*}
  R^{\sigma(y)}_{t_0,t_0 + h} &= \sigma(y(t_0 + h)) - \sigma(y(t_0)) - D \sigma(y(t_0))y'(t_0) x_{t_0,t_0 + h} \\
  &=  \sigma(y(t_0 + h)) - \sigma(y(t_0)) - D \sigma(y(t_0))(y(t_0 + h) - y(t_0)) + D \sigma(y(t_0)) R^y_{t_0,t_0 + h} \\
  &= \frac{D^2 \sigma(\xi)}{2} (y(t_0 + h) - y(t_0))^2 + D \sigma(y(t_0)) R^y_{t_0,t_0 + h}
 \end{align*}
 for some $\xi \in \R^d$ on the line segment between $y(t_0 + h)$ and $y(t_0)$. Therefore,
 \begin{align*}
  \|R^{\sigma(y)} \|_{2/p;[t_0,t_0 + h]} \leq \frac{C}{2} \|y\|_{\frac{1}{p};[t_0,t_0 + h]}^2 + C \|R^y\|_{2/p;[t_0,t_0 + h]}. 
 \end{align*}
 For $R^y$, we have
 \begin{align*}
  R^y_{t_0,t_0 + h} = y(t_0 + h) - y(t_0) -\sigma(y(t_0))x_{t_0 ,t_0 + h} = \int_{t_0}^{t_0 + h} b(y(s)) \diff s + \int_{t_0}^{t_0 + h} \sigma(y(s)) \diff \mathbf{x}(s) - \sigma(y(t_0))x_{t_0 ,t_0 + h}.
 \end{align*}
 Setting
 \begin{align*}
  I_{s,t} := \int_s^t \sigma(y(u)) \diff \mathbf{x}(u)  - \sigma(y(s))x_{s,t} - D \sigma(y(s))\sigma(y(s)) \mathbb{X}_{s,t},
 \end{align*}
  we obtain
 \begin{align*}
  \|R^{y} \|_{2/p;[t_0,t_0 + h]} \leq C h^{1 - \frac{2}{p}} + h^{\frac{1}{p}} \|I\|_{3/p;[t_0,t_0 + h]} + C^2 \|\mathbb{X}\|_{2/p;[t_0,t_0 + h]}.
 \end{align*}
 To summarize, we have seen that there is a constant $C$ depending on the stated parameters such that
 \begin{align*}
  \| I\|_{3/p;[t_0,t_0 + h]} \leq C \|x\|_{\frac{1}{p};[t_0,t_0 + h]} h^{\frac{1}{p}} \| I\|_{3/p;[s,t]} + C.
 \end{align*}
 Therefore, if $h \leq \delta$ with $\delta = \frac{1}{2^p C^p \|x\|_{\frac{1}{p}}^p}$, we obtain the bound  $\| I\|_{3/p;[t_0,t_0 + h]} \leq 2 C$. Using this, we see that for a constant $C$,
 \begin{align*}
  |y(t_0 + h) - y_1^2| &\leq C\left(h^{\frac{3}{p}} + h^{1 + \frac{1}{p}} + h^{1 + \frac{2}{p}} + h^{2} \right)\leq   C h^{\left(1 + \frac{1}{p}\right) \wedge \frac{3}{p}}
 \end{align*}
 provided $h \leq \delta$, and  \eqref{eqn:onestep_2} is shown. The proof for  \eqref{eqn:onestep_2} is conceptually the same. The additional ingredient is a uniform bound for the $\frac{4}{p}$-H\"older norm of
 \begin{align*}
  (s,t) \mapsto \int_s^t \sigma(y(u))\,\diff \mathbf{x}(u)  - \sigma(y(s))x_{s,t} - D \sigma(y(s))\sigma(y(s)) \mathbb{X}_{s,t} - \sum_{i,j,k = 1}^m \sigma_i \sigma_j \sigma_k \operatorname{Id}(y(s))(\mathbb{X}_{s,t}^{3;i,j,k}).
 \end{align*}
 This can be achieved by using \emph{second order Gubinelli derivatives}. A path $\hat{y} \colon [0,T] \to L(\R^m,\R^d)$ is called controlled by the geometric rough path $\mathbf{x}$ with first and second Gubinelli derivatives  
\begin{align*}
 \hat{y}^{(1)} \colon [0,T] &\to L(\R^m \otimes \R^m,\R^d) \cong L(\R^m L(\R^m,\R^d)),\\
 \hat{y}^{(2)} \colon [0,T] &\to L((\R^m)^{\otimes 3},\R^d) \cong L(\R^m, L(\R^m \otimes \R^m, \R^d))
\end{align*}
if $\hat{y}$, $\hat{y}^{(1)}$ and $\hat{y}^{(2)}$ are $\frac{1}{p}$-H\"older continuous and
\begin{align*}
 \hat{y}(t) &= \hat{y}(s) + \hat{y}^{(1)}(s) x_{s,t} + \hat{y}^{(2)}(s) \mathbb{X}^2_{s,t} + R^3_{s,t}, \\
 \hat{y}^{(1)}(t) &= \hat{y}^{(1)}(s) + \hat{y}^{(2)}(s) x_{s,t} + R^{2}_{s,t}
\end{align*}
where $R^k$ is $\frac{k}{p}$-H\"older continuous, $k = 2,3$. This is a special case of the general concept introduced in \cite{Gub10}, see also \cite[Section 7.6]{FH14}. If we set   
\begin{align*}
 J_{s,t} := \hat{y}(s)x_{s,t} + \hat{y}^{(1)}(s) \mathbb{X}^{2}_{s,t} + \hat{y}^{(2)}(s) \mathbb{X}^{3}_{s,t},
\end{align*}
we have
\begin{align*}
 J_{s,t} - J_{s,u} - J_{u,t} &= \left(\hat{y}(s) - \hat{y}(u) + \hat{y}^{(1)}(s) x_{s,u} + \hat{y}^{(2)}(s) \mathbb{X}^2_{s,u} \right)x_{u,t} \\
 &\quad + \left( \hat{y}^{(1)}(s) - \hat{y}^{(1)}(u) + \hat{y}^{(2)}(s) x_{s,u} \right) \mathbb{X}^2_{u,t} + (\hat{y}^{(2)}(s) - \hat{y}^{(2)}(u))\mathbb{X}^3_{u,t} \\
 &= -R^3_{s,u} x_{u,t} - R^2_{s,u} \mathbb{X}^2_{u,t} + (\hat{y}^{(2)}(s) - \hat{y}^{(2)}(u))\mathbb{X}^3_{u,t}.
\end{align*}
Applying the Sewing lemma \cite[Lemma 4.2]{FH14}, we obtain
\begin{align*}
 &\left| \int_s^t \hat{y}(u) \diff \mathbf{x}(u) - \hat{y}(s)x_{s,t} - \hat{y}^{(1)}(s) \mathbb{X}^{(2)}_{s,t} - \hat{y}^{(2)}(s) \mathbb{X}^{(3)}_{s,t} \right| \\
 \leq\ &C|t-s|^{\frac{4}{p}} (\|x\|_{\frac{1}{p};[s,t]} \|R^3\|_{3/p;[s,t]} + \|\mathbb{X}^2\|_{2/p;[s,t]} \|R^2\|_{2/p;[s,t]} + \|\mathbb{X}^3\|_{3/p;[s,t]} \|\hat{y}^{(2)}\|_{\frac{1}{p};[s,t]}),
\end{align*}
similar to \cite[Theorem 4.10]{FH14}. We can apply this estimate to
\begin{align*}
 \hat{y}(u) &= \sigma(y(u)), \\
 \hat{y}^{(1)}(u) &= D \sigma(y(u))\sigma(y(u)) \qquad \text{and} \\
 \hat{y}^{(2)}(u) &= D^2 \sigma(y(u))(\sigma(y(u)) \otimes \sigma(y(u))) + D \sigma(y(u)) D \sigma(y(u)) \sigma(y(u))
\end{align*}
and proceed as above to obtain the estimate
\begin{align*}
 |y(t_0 + h) - y_1^2| &\leq C \left(h^{\frac{4}{p}} + h^{1 + \frac{1}{p}} + h^{1 + \frac{2}{p}} + h^{1 + \frac{3}{p}} + h^{2} \right) \leq C h^{1 + \frac{1}{p}}
\end{align*}
where we used  $1 + \frac{1}{p} < \frac{4}{p}$ for $p < 3$. Details are left to the reader.

\end{proof}

\begin{remark}
 Assuming higher regularity of $b$, one can easily define a modification of the scheme $(y_r^3)$ which has a local error of $\frac{4}{p}$. However, the order of the  implementable schemes which we will define below (cf. Definition  \ref{definition:impl_schemes_simplified}) will not increase for this modification because the rate will be dictated by the rate of the Wong-Zakai approximation, cf. the proof of the forthcoming Theorem \ref{thm:rates_Gaussian_rough}, which is already smaller than the rate obtained for the scheme $(y_r^3)$.
\end{remark}

\begin{theorem}\label{thm:conv_rates}
 Let $\mathbf{x}$ be a $\frac{1}{p}$-H\"older rough path for some $p \in [1,3)$. 
 Let $b \colon \R^d \to \R^d$ satisfy Assumption \ref{as:drift} and \ref{as:drift2} and let $\sigma \in \mathcal{C}^3_{\mathrm{loc}}$. Assume that the rough differential equation \eqref{eqn:RDE} induces a continuous semiflow $\phi$ on the time interval $[0,T]$
  For $h > 0$, consider the numerical approximation $(y^l_n)_{n \in  \{0,\ldots,N_h\}}$, $l = 1,2,3$, defined in  \eqref{eqn:implicit_Euler}, \eqref{eqn:implicit_Milstein} resp. \eqref{eqn:implicit_Milstein_order3}. Then there exist constants $\delta > 0$ and $C > 0$ such that
  \begin{align*}
    \max_{n \in \{0,\ldots,N_h\}} \big|y(t_n)-y^1_n\big| &\le C h^{\frac{2}{p} - 1}
  \end{align*}
  for $p \in [1,2)$ and
  \begin{align*}
    \max_{n \in \{0,\ldots,N_h\}} \big|y(t_n)-y^2_n\big| &\le C h^{\frac{1}{p} \wedge (\frac{3}{p} - 1)}, \\
    \max_{n \in \{0,\ldots,N_h\}} \big|y(t_n)-y^3_n\big| &\le C h^{\frac{1}{p}}
  \end{align*}
  for $p \in [1,3)$ and all step sizes satisfying $h < \delta$.
%

\end{theorem}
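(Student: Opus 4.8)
The plan is to derive the global convergence rates from the one-step estimates of Proposition~\ref{prop:1step_estimate} together with the stability of the continuous semiflow, using a standard telescoping/perturbation argument. Let $\phi$ denote the semiflow induced by \eqref{eqn:RDE}, so that $y(t_{n+1}) = \phi(t_n,t_{n+1},y(t_n))$, and let $\Psi^l_h$ denote the one-step map of scheme $l$, i.e.\ $y^l_{n+1} = \Psi^l_h(y^l_n)$. The error at the grid points decomposes as $y(t_{n+1}) - y^l_{n+1} = \big(\phi(t_n,t_{n+1},y(t_n)) - \Psi^l_h(y(t_n))\big) + \big(\Psi^l_h(y(t_n)) - \Psi^l_h(y^l_n)\big)$; the first bracket is the local error and the second is controlled by a stability/Lipschitz bound on $\Psi^l_h$. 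I would iterate this recursion and invoke the discrete Gronwall inequality, Lemma~\ref{lem:Gronwall-D}.

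The key steps, in order, are as follows. First, a priori boundedness: since the continuous equation induces a continuous semiflow on $[0,T]$, the true solution $y$ stays in a compact set; I would then show the numerical iterates $(y^l_n)$ remain in a slightly larger ball $\overline{B_r(0)}$ uniformly in $h < \delta$ — this is needed so that the locally defined constants and the local Lipschitz bounds from Assumption~\ref{as:drift2} and $\sigma \in \mathcal{C}^3_{\mathrm{loc}}$ can be replaced by genuine constants, reducing matters to the bounded/globally Lipschitz setting of Proposition~\ref{prop:1step_estimate} and Proposition~\ref{prop:lipschitz_flow}. One gets this boundedness from the same canceling-$|\hat y_{j+1}|$ trick and discrete Gronwall argument used in Proposition~\ref{prop:RN_y}, now with the extra increments $\sum_i \sigma_i(y^l_r)x^i_{t_r,t_{r+1}}$ (and the $\mathbb{X}^2,\mathbb{X}^3$ terms), which are bounded by $C h^{1/p}$, $C h^{2/p}$, $C h^{3/p}$ respectively using $\|\mathbf{x}\|_{1/p;[0,T]}\le C_0$ and boundedness of $\sigma$ and its derivatives on the ball. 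Second, one-step Lipschitz stability of $\Psi^l_h$: writing $G(\zeta) = \zeta - hb(\zeta)$ as in Theorem~\ref{thm:well-posedness}, one has $\Psi^l_h(\zeta) = G^{-1}\big(\zeta + (\text{increment terms evaluated at }\zeta)\big)$; using $\|G^{-1}\|_{\mathrm{Lip}} \le (1-C_bh)^{-1} \le 1 + Ch$ from Proposition~\ref{prop:nonlinear} and the local Lipschitz bounds on $\sigma_i$, $\sigma_i\sigma_j\operatorname{Id}$, $\sigma_i\sigma_j\sigma_k\operatorname{Id}$ on the ball (times $h^{1/p}$, $h^{2/p}$, $h^{3/p}$), one obtains $|\Psi^l_h(\zeta_1) - \Psi^l_h(\zeta_2)| \le (1 + Ch^{1/p})|\zeta_1 - \zeta_2|$. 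Third, combine: inserting the local error bound $|\phi(t_n,t_{n+1},y(t_n)) - \Psi^l_h(y(t_n))| \le C h^{\theta_l}$ (with $\theta_1 = 2/p$, $\theta_2 = (1+\tfrac1p)\wedge\tfrac3p$, $\theta_3 = 1+\tfrac1p$ from Proposition~\ref{prop:1step_estimate}, noting $p\in[1,3)$ so $h^\theta$ makes sense) and the stability estimate into the recursion gives $|e_{n+1}| \le (1 + Ch^{1/p})|e_n| + Ch^{\theta_l}$; unrolling over $n \le N_h = T/h$ steps and using $(1+Ch^{1/p})^{N_h} \le \exp(CT h^{1/p - 1}) \le \exp(CT)$ (since $p \ge 1$), one gets $\max_n |e_n| \le C N_h h^{\theta_l} = C T h^{\theta_l - 1}$, which yields exactly $h^{2/p - 1}$, $h^{(1/p)\wedge(3/p-1)}$ and $h^{1/p}$ respectively.

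One subtlety to handle carefully: the local error estimates in Proposition~\ref{prop:1step_estimate} are stated for a single step starting from $y(t_0) = y_0$ on a short interval, and there $b$ is assumed bounded and globally Lipschitz and $\sigma \in \mathcal{C}^3_b$, whereas here we only have the local versions (Assumption~\ref{as:drift2}, $\sigma\in\mathcal{C}^3_{\mathrm{loc}}$) plus the standing one-sided Lipschitz Assumption~\ref{as:drift}. The resolution is the localization performed in the first step: once all relevant trajectories ($y$ and all $y^l_n$) are confined to a fixed compact ball $K$ uniformly in $h$, one may replace $b$ and $\sigma$ outside a neighborhood of $K$ by modified versions $\tilde b \in \mathcal{C}_b$ (still one-sided Lipschitz with the same $C_b$, e.g.\ by a smooth cutoff that does not destroy \eqref{eqn:onelip} near $K$) and $\tilde\sigma \in \mathcal{C}^3_b$ that agree with $b,\sigma$ on $K$; the modified equation has the same solution $y$, the modified schemes produce the same iterates, and Propositions~\ref{prop:1step_estimate} and~\ref{prop:lipschitz_flow} apply verbatim to the modified data.

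I expect the main obstacle to be the a priori boundedness of the numerical iterates uniformly in $h$ (the first step): the one-sided Lipschitz condition controls the drift via the $\langle \hat y_{j+1}-\hat y_j,\hat y_{j+1}\rangle$ computation as in Proposition~\ref{prop:RN_y}, but now the "noise" part is the sum of the $\sigma$-increments evaluated at $y^l_r$ — which is \emph{not} additive — so one must first absorb these terms. The trick is to split $\sigma_i(y^l_r) = (\sigma_i(y^l_r) - \sigma_i(0)) + \sigma_i(0)$ and use that $\sigma \in \mathcal{C}^3_{\mathrm{loc}}$ together with an \emph{induction on the step index}: assuming $|y^l_j| \le r$ for $j \le n$, the increment terms at step $n$ are bounded by $C(1 + |y^l_n|)h^{1/p} \le C(1+r)h^{1/p}$, which for $h$ small is a lower-order perturbation that the Gronwall argument swallows, closing the induction with a bound $r$ that can be chosen independent of $h$. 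Getting this bootstrap to close cleanly — in particular choosing $\delta$ so that the ball $\overline{B_r(0)}$ is stable under one step — is the technical heart of the argument; the rest is routine.
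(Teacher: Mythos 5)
Your overall architecture (localize to reduce to bounded coefficients, then combine the local error of Proposition~\ref{prop:1step_estimate} with a stability estimate via a telescoping sum and Gronwall) is the right skeleton, and it is close to what the paper does. But two of your key steps would fail as sketched. First, the step you yourself identify as the ``technical heart'' --- a priori boundedness of the iterates $(y^l_n)$ uniformly in $h$ via the cancellation trick of Proposition~\ref{prop:RN_y} plus an induction on the step index --- does not close. Your bound on the noise contribution is $C(1+r)h^{1/p}$ \emph{per step}; summed over $N_h = T/h$ steps this gives $C(1+r)Th^{1/p-1}$, which diverges as $h\to 0$ for every $p>1$. Unlike the additive case, where the forcing per step is $O(h)$ and the sum stays $O(1)$, here the triangle inequality destroys the cancellations in $\sum_r \sigma_i(y^l_r)x^i_{t_r,t_{r+1}}$ that make the scheme bounded, so no $h$-independent radius $r$ can emerge from this bootstrap. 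Second, your stability step contains the same kind of error: $(1+Ch^{1/p})^{N_h}\le \exp(CTh^{1/p-1})$, and $h^{1/p-1}\to\infty$ for $p>1$ and $h\to 0$, so this is \emph{not} bounded by $\exp(CT)$. A uniform bound on the Lipschitz constant of the flow over $[t_{n},T]$ does hold, but it comes from the rough-path estimates (composing the local bound of Proposition~\ref{prop:lipschitz_flow} along a partition adapted to the control of $\mathbf{x}$, not along the uniform grid); it cannot be obtained by naively multiplying the one-step factors $(1+Ch^{1/p})$.

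The paper's proof avoids the first problem entirely by reversing the order of your argument: it localizes \emph{before} knowing anything about the numerical iterates. One first replaces $b,\sigma$ by $\bar b$ bounded Lipschitz and $\bar\sigma\in\mathcal{C}^3_b$ agreeing with $b,\sigma$ on a ball $B(0,M)$ containing the exact trajectory, proves the global error bound for the \emph{modified} scheme $(\bar y^l_n)$ against the modified flow $\bar\phi$ (using Propositions~\ref{prop:1step_estimate} and~\ref{prop:lipschitz_flow}, which apply since the modified coefficients are globally nice), and only then observes that $\bar\phi(0,\cdot,\xi)=\phi(0,\cdot,\xi)$, so for $h<\delta$ the convergence estimate forces $\bar y^l_n\in B(0,M)$; the uniqueness statement of Theorem~\ref{thm:well-posedness_mult} then identifies $\bar y^l_n$ with $y^l_n$. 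In other words, boundedness of the original iterates is a \emph{consequence} of convergence of the localized scheme plus uniqueness of the implicit step, not a prerequisite to be proved by a discrete energy estimate. I recommend you restructure your argument along these lines and replace the $(1+Ch^{1/p})^{N_h}$ computation by the uniform flow-Lipschitz bound (as in \cite[Section 10.3.5]{FV10}).
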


\begin{proof}
  Since $\phi$ is continuous, we can find a number $M > 0$ such that
\begin{align*}
 \{ \phi(0,t,\xi)\, :\, t \in [0,T]\} \subset B(0,M).
\end{align*}
  Since $b$ is locally Lipschitz continuous, there is a bounded, Lipschitz continuous function $\bar{b} \colon \R^d \to \R^d$ which coincides with $b$ on $B(0,M)$. Moreover, we can find a $\bar{\sigma} \in \mathcal{C}^3_b$ which coincides with $\sigma$ on $B(0,M)$. Let $\bar{\phi}$ be the flow induced by the rough differential equation
  \begin{align*}
  \diff{y} = \bar{b}(y)\diff{t} + \bar{\sigma}(y) \diff \mathbf{x}.
 \end{align*}
 Let $(\bar{y}^l_n)_{n \in  \{0,\ldots,N_h\}}$, $l = 1,2,3$, denote the numerical approximations defined in  \eqref{eqn:implicit_Euler}, \eqref{eqn:implicit_Milstein} resp. \eqref{eqn:implicit_Milstein_order3} where we replace $b$ by $\bar{b}$ and $\sigma$ by $\bar{\sigma}$. Using the local error obtained in Proposition \ref{prop:1step_estimate} and the Lipschitz property of the flow map $\bar{\phi}$ deduced in Proposition \ref{prop:lipschitz_flow} , it is straightforward, cf. e.g.  \cite[Section 10.3.5]{FV10} or \cite[Proposition 4.1]{RR20}, to deduce the global error bounds
  \begin{align*}
    \max_{n \in \{0,\ldots,N_h\}} \big|\bar{\phi}(0,t_n,\xi) - \bar{y}^1_n\big| &\le C h^{\frac{2}{p} - 1}
  \end{align*}
  for $p \in [1,2)$ and
  \begin{align*}
    \max_{n \in \{0,\ldots,N_h\}} \big|\bar{\phi}(0,t_n,\xi) - \bar{y}^2_n\big| &\le C h^{\frac{1}{p} \wedge (\frac{3}{p} - 1)}, \\
    \max_{n \in \{0,\ldots,N_h\}} \big|\bar{\phi}(0,t_n,\xi) - \bar{y}^3_n\big| &\le C h^{\frac{1}{p}}
  \end{align*}
  for $p \in [1,3)$ for sufficiently small $h > 0$. Since $\bar{\phi}(0,t,\xi) = {\phi}(0,t,\xi)$ for all $t \in [0,T]$, we can choose $\delta > 0$ sufficiently small to obtain that $\bar{y}^l_n \in  B(0,M)$ for every $n \in  \{0,\ldots,N_h\}$, $h < \delta$ and $l = 1,2,3$. From the uniqueness statement in Theorem \ref{thm:well-posedness_mult}, it follows that $\bar{y}^l_n =  {y}^l_n$ for every  $n \in  \{0,\ldots,N_h\}$, $h < \delta$ and $l = 1,2,3$ which shows the claim.

\end{proof}

\begin{remark}
 At the current stage, we do not know whether Assumption \ref{as:drift} and \ref{as:drift2} on $b$ alone imply the existence of a semiflow for a generic rough path $\mathbf{x}$, even for $\sigma$ being bounded. In \cite{RS17}, one of us together with M.~Scheutzow formulated a further condition: We assumed that there exists a constant $C > 0$ such that
  \begin{align}\label{eqn:normal_growth}
   \left| b(\xi) - \frac{\langle b(\xi),\xi \rangle \xi}{|\xi|^2} \right| \leq C (1 + |\xi|) \quad \text{for all } \xi \in \R^d \setminus \{0\}.
  \end{align}
 Assuming this assumption in addition to \ref{as:drift} and \ref{as:drift2}, \cite[Theorem 4.3]{RS17} implies the existence of a semiflow provided $\sigma \in \mathcal{C}^4_b$, therefore Theorem \ref{thm:conv_rates} applies in this case. The subtle case of unbounded diffusion vector fields was discussed by Lejay in the two works \cite{Lej09,Lej12}. 
\end{remark}

We want to apply numerical schemes in the stochastic case now, i.e. when the driving rough path is random. In this context, the higher order objects (i.e. the iterated integrals) are usually not explicitly known and hard to simulate. To overcome this issue, Deya-Neuenkirch-Tindel propose in \cite{DNT12} a numerical scheme in which they replace the higher order objects by products of increments of the path. The same idea motivates us to look at the following schemes:
\begin{definition}\label{definition:impl_schemes_simplified}
Let $\mathcal{T}^h$ denote the partition 
\begin{align*}
  \mathcal{T}^h =\{t_0=0<t_1<\dots <t_r <\dots<t_{N_h}=T\}\ \ \mbox{with\
  } t_r = rh
\end{align*}
with step size $h = \frac{T}{N_h}$, $N_h \in \N$. Let $x \colon [0,T] \to \R^m$ be a path. Then we define two numerical schemes $(y^{\mathfrak{s},l}_r)$, $l = 2,3$, as follows:

\begin{align}\label{eqn:implicit_Milstein_simple}
 y^{\mathfrak{s},2}_{r+1} = y^{\mathfrak{s},2}_r + h b(y^{\mathfrak{s},2}_{r+1}) + \sum_{i = 1}^m \sigma_i(y^{\mathfrak{s},2}_{r})x^i_{t_r,t_{r+1}} + \frac{1}{2} \sum_{i,j = 1}^m \sigma_i \sigma_j \operatorname{Id}(y^{\mathfrak{s},2}_{r}) x^i_{t_r,t_{r+1}} x^j_{t_r,t_{r+1}}
\end{align}
resp. 
\begin{align}\label{eqn:implicit_Milstein_order3_simple}
  \begin{split}
 y^{\mathfrak{s},3}_{r+1} &= y^{\mathfrak{s},3}_r + h b(y^{\mathfrak{s},3}_{r+1}) + \sum_{i = 1}^m \sigma_i(y^{\mathfrak{s},3}_{r})x^i_{t_r,t_{r+1}} + \frac{1}{2} \sum_{i,j = 1}^m \sigma_i \sigma_j \operatorname{Id}(y^{\mathfrak{s},3}_{r}) x^i_{t_r,t_{r+1}} x^j_{t_r,t_{r+1}} \\
 &\quad + \frac{1}{6} \sum_{i,j,k = 1}^m \sigma_i \sigma_j \sigma_k \operatorname{Id}(y^{\mathfrak{s},3}_r) x^i_{t_r,t_{r+1}} x^j_{t_r,t_{r+1}} x^k_{t_r,t_{r+1}}
  \end{split}
\end{align}
for $r \in \{0,\ldots,N_h-1\}$ with initial condition $y^{\mathfrak{s},2}_0 = y^{\mathfrak{s},3}_0 = \xi$, provided solutions to these equations exist and are unique.
\end{definition}

 We have already seen that Assumption~\ref{as:drift} for be $b$ implies that these schemes are well-defined  provided $C_b h < 1$.

We will apply the schemes to rough differential equations driven by Gaussian processes in the sense of Friz-Victoir \cite{FV10-2}. Next, we recall a basic existence theorem.

\begin{theorem}\label{thm:existence_Gaussian}
  Let $X = (X^1, \ldots, X^m)$ be a continuous, centered Gaussian process with independent components. Assume that each component has stationary increments and that the function $\varrho^2$ given by
 \begin{align*}
  \varrho^2(|t-s|) = \E( |X_t - X_s|^2)
 \end{align*}
  is concave with $\varrho(\tau) = \mathcal{O}(\tau^{\frac{1}{\rho}})$ for $\tau \to 0$ and some $\rho \in \big[1,\frac{3}{2}\big)$. Then there exists a lift of $X$ to an enhanced Gaussian process $\mathbf{X} = (X,\mathbb{X})$ on a set of full measure, i.e. $\mathbf{X}$ is almost surely a $\frac{1}{p}$-H\"older rough path for any $2 \rho < p < 3$. The second order process $\mathbb{X}$ is given as a limit in probability of usual Riemann sums.
\end{theorem}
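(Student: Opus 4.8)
The statement is a special case of the general construction of geometric Gaussian rough paths, so my plan is to reduce it to that theory (\cite{FV10-2}, see also \cite[Chapter 10]{FH14}) and to isolate the only hypothesis that really needs checking: that the covariance of each component has finite two-dimensional $\rho$-variation. As a preliminary normalisation, observe that stationary increments force $X^i_0$ to be a Gaussian random variable independent of all increments of $X$, so passing from $X$ to $X-X_0$ changes neither the existence of a lift nor the asserted regularity; we may therefore assume $X_0=0$, and then, by polarisation and the independence of the components, $\E\big(X^i_sX^j_t\big)=\delta_{ij}\,\tfrac12\big(\varrho^2(s)+\varrho^2(t)-\varrho^2(|t-s|)\big)$.

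From this identity one obtains, for $s\le t$ and $u\le v$, the rectangular increment of the covariance $R^i$ of $X^i$,
\begin{align*}
 R^i\big([s,t]\times[u,v]\big):=\E\big((X^i_t-X^i_s)(X^i_v-X^i_u)\big)=\tfrac12\Big(\varrho^2(|v-s|)-\varrho^2(|v-t|)-\varrho^2(|u-s|)+\varrho^2(|u-t|)\Big),
\end{align*}
and for disjoint intervals $[s,t]$, $[u,v]$ the right-hand side is a non-positive second difference of the concave function $\varrho^2$, i.e.\ disjoint increments of each $X^i$ are \emph{negatively correlated}. This sign, together with the growth hypothesis $\varrho(\tau)=\mathcal O(\tau^{1/\rho})$, is exactly what makes $\|R^i\|_{\rho\text{-var};[0,T]^2}$ finite: given a partition $0=t_0<\dots<t_N=T$, split $\sum_{k,\ell}|R^i([t_k,t_{k+1}]\times[t_\ell,t_{\ell+1}])|^{\rho}$ into the diagonal part $\sum_k\varrho^2(t_{k+1}-t_k)^{\rho}\lesssim\sum_k(t_{k+1}-t_k)^{2}\lesssim T^2$, using $\varrho^2(\tau)^{\rho}\lesssim\tau^2$, and the off-diagonal part, which is controlled by combining the negative-correlation sign with the telescoping identity $\sum_\ell R^i([t_k,t_{k+1}]\times[t_\ell,t_{\ell+1}])=\E\big((X^i_{t_{k+1}}-X^i_{t_k})X^i_T\big)$, the Cauchy--Schwarz bound $|\E((X^i_{t_{k+1}}-X^i_{t_k})X^i_T)|\le\varrho(t_{k+1}-t_k)\varrho(T)$, and the elementary inequality $\sum_\ell a_\ell^{\rho}\le(\sum_\ell a_\ell)^{\rho}$ for $\rho\ge1$ and non-negative $a_\ell$. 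The resulting bound is uniform in the partition; this is precisely the computation in \cite[Chapter 15]{FV10-2}, which I would cite rather than repeat, with $\rho\ge1$ used only for the last inequality.

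The rest is a black-box application of the Gaussian rough path machinery. The piecewise-linear (equivalently, mollified) approximations $X^{(n)}$ of $X$ have classically well-defined iterated integrals $\mathbb X^{(n)}$, whose increments $\mathbb X^{(n)}_{s,t}=\sum X^{(n)}_{s,u}\otimes\,\mathrm dX^{(n)}_u$ are exactly the Riemann sums in the statement; by the finite $\rho$-variation of the $R^i$ (and independence, which makes the off-diagonal area blocks converge on their own) these converge in $L^2$ to a limit $\mathbb X_{s,t}$ lying in the second Wiener chaos, so equivalence of moments upgrades the $L^2$-bound to $\E|\mathbb X_{s,t}|^{q}\lesssim|t-s|^{q\gamma}$ for every $q<\infty$, with $\gamma$ arbitrarily close to $\tfrac2p$ for any $p>2\rho$ (the first level $X$ being itself $\tfrac1p$-H\"older for $p>\rho$ by the growth bound and Gaussianity). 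Kolmogorov's continuity criterion for rough paths then produces a modification that is almost surely a geometric $\tfrac1p$-H\"older rough path in the sense of Definition \ref{definition:rough_path}, simultaneously for all $p\in(2\rho,3)$ (take $p_n\downarrow2\rho$); the range $(2\rho,3)$ is nonempty precisely because $\rho<\tfrac32$, which is also exactly why the second level of $\mathbf x$ suffices. The assertion that $\mathbb X$ is a limit in probability of Riemann sums is built into this construction.

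The only real work is the $\rho$-variation estimate above, and within it the off-diagonal terms: they cannot be bounded one by one and must be summed through the telescoping/negative-correlation identity, which is where the concavity of $\varrho^2$ is indispensable. Everything else is either a routine covariance computation or a direct appeal to \cite{FV10-2}.
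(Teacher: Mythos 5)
Your proposal is correct and follows essentially the same route as the paper, whose proof is simply a citation of the Friz--Victoir/Friz--Hairer Gaussian rough path theory (\cite{FGGR16}, \cite[Theorem 10.9]{FH14}); you additionally spell out the one hypothesis that must be verified, namely that concavity of $\varrho^2$ together with the growth bound $\varrho(\tau)=\mathcal{O}(\tau^{1/\rho})$ yields finite two-dimensional $\rho$-variation of the covariance via the negative-correlation/telescoping argument. One cosmetic point: stationary increments do not by themselves force $X_0$ to be independent of the increments, but this is immaterial since, as you yourself note, the lift depends only on increments of $X$.
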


\begin{proof}
 Cf. \cite{FGGR16} or \cite[Theorem 10.9]{FH14}.
\end{proof}

\begin{theorem}\label{thm:rates_Gaussian_rough}
 Let $X = (X^1, \ldots, X^m)$ be as in Theorem \ref{thm:existence_Gaussian} with corresponding lift $\mathbf{X}$. Assume that 
  $b$ satisfies Assumption \ref{as:drift} and \ref{as:drift2} and that $\sigma \in \mathcal{C}^{\infty}_{\text{loc}}$. Assume that for every given $\frac{1}{p}$-H\"older rough path $\mathbf{x}$, the rough differential equation \eqref{eqn:RDE} induces a continuous semiflow $\phi^{\mathbf{x}}$ for which
  \begin{align}\label{eqn:unif_bound_flow}
   \sup_{\mathbf{x}\, :\, \|\mathbf{x}\|_{1/p} \leq C_0} \sup_{t \in [0,T]} |\phi^{\mathbf{x}}(0,t,\xi)| < \infty
  \end{align}
  for any given $C_0 > 0$.  Let $Y$ denote the solution to the random rough differential equation \eqref{eqn:RDE} where we replace $\mathbf{x}$ by $\mathbf{X}$ and let $Y^{\mathfrak{s},l}_n$, $l = 2,3$, denote the corresponding numerical approximation defined in \eqref{eqn:implicit_Milstein_simple} and \eqref{eqn:implicit_Milstein_order3_simple}.

  Then for every $2\rho < p < 3$, there are almost surely finite random variables $\delta$, $C_2$ and $C_3$ such that 
    \begin{align*}
    \max_{n \in \{0,\ldots,N_h\}} \big|Y(t_n) - Y^{\mathfrak{s},2}_n\big| &\le C_2 h^{\frac{3}{p} - 1}, \\
    \max_{n \in \{0,\ldots,N_h\}} \big|Y(t_n) - Y^{\mathfrak{s},3}_n\big| &\le C_3 h^{\frac{2}{p} - \frac{1}{2}}
  \end{align*}
  for all step sizes satisfying $h < \delta$.

\end{theorem}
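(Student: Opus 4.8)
The plan is to reduce the implementable schemes \eqref{eqn:implicit_Milstein_simple}--\eqref{eqn:implicit_Milstein_order3_simple} to the situation already treated in Theorem~\ref{thm:conv_rates}, by reinterpreting them as the schemes \eqref{eqn:implicit_Milstein} resp.\ \eqref{eqn:implicit_Milstein_order3} run on a \emph{Wong--Zakai lift}. Let $X^h$ be the piecewise-linear interpolation of $X$ over the grid $\mathcal{T}^h$, and let $\mathbf{X}^h = (X^h, \mathbb{X}^{2,h}, \mathbb{X}^{3,h})$ be the geometric rough path obtained from $X^h$ by iterated integration. Since $X^h$ is a straight segment on each cell $[t_r,t_{r+1}]$, its iterated integrals over that cell are $\mathbb{X}^{2,h}_{t_r,t_{r+1}} = \tfrac12 x_{t_r,t_{r+1}}^{\otimes 2}$ and $\mathbb{X}^{3,h}_{t_r,t_{r+1}} = \tfrac16 x_{t_r,t_{r+1}}^{\otimes 3}$, while $X^h$ and $X$ agree at every grid point. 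Comparing this with Definitions~\ref{definition:impl_schemes} and \ref{definition:impl_schemes_simplified}, and invoking uniqueness of the one-step solves (Theorem~\ref{thm:well-posedness_mult}, applicable since $C_b h < 1$), it follows that $Y^{\mathfrak{s},l}_n$ coincides with the $n$-th iterate of scheme \eqref{eqn:implicit_Milstein} ($l=2$), resp.\ \eqref{eqn:implicit_Milstein_order3} ($l=3$), for the driver $\mathbf{X}^h$. Write $Y^h$ for the solution of \eqref{eqn:RDE} driven by $\mathbf{X}^h$.

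The second ingredient is the known rate of convergence of the piecewise-linear (Wong--Zakai) approximation for Gaussian rough paths satisfying the hypotheses of Theorem~\ref{thm:existence_Gaussian}: for every $2\rho < p < 3$ there is an almost surely finite random variable $C$ with
\begin{align*}
 \varrho_{1/p}(\mathbf{X},\mathbf{X}^h) \le C\, h^{\frac{2}{p}-\frac12}, \qquad h \in (0,T],
\end{align*}
and in particular $C_0 := \sup_{h \in (0,T]} \|\mathbf{X}^h\|_{1/p;[0,T]} < \infty$ almost surely. By the uniform-boundedness hypothesis \eqref{eqn:unif_bound_flow} applied with this $C_0$, all the solutions $Y^h$ as well as $Y$ take values in a fixed ball $B(0,M)$ with $M$ a.s.\ finite. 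Hence, exactly as in the proof of Theorem~\ref{thm:conv_rates}, I would replace $b$ and $\sigma$ by a bounded Lipschitz function $\bar b$ and a $\bar\sigma \in \mathcal{C}^3_b$ that agree with $b,\sigma$ on $B(0,M)$; this alters neither $Y$, $Y^h$, nor --- for $h$ small, by the standard argument that the true solution lies in $B(0,M)$ and the global error tends to $0$ --- the numerical iterates $Y^{\mathfrak{s},l}_n$.

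Now apply Theorem~\ref{thm:conv_rates} to the localised equation with driver $\mathbf{X}^h$ (the induced semiflow exists because the coefficients are bounded, and the threshold $\delta$ may be chosen uniform in $h$ since $\|\mathbf{X}^h\|_{1/p} \le C_0$). Using $p > 2\rho \ge 2$, so that $\tfrac1p \wedge (\tfrac3p - 1) = \tfrac3p - 1$, this yields
\begin{align*}
 \big|Y^h(t_n) - Y^{\mathfrak{s},2}_n\big| \le C h^{\frac3p - 1}, \qquad \big|Y^h(t_n) - Y^{\mathfrak{s},3}_n\big| \le C h^{\frac1p}
\end{align*}
uniformly in $n$ and in $h < \delta$. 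On the other hand, local Lipschitz continuity of the It\^o--Lyons solution map --- for the localised, globally bounded coefficients and for the two drivers $\mathbf{X},\mathbf{X}^h$ of norm at most $C_0$ --- gives $\sup_{t \in [0,T]} |Y(t) - Y^h(t)| \le C\,\varrho_{1/p}(\mathbf{X},\mathbf{X}^h) \le C h^{\frac2p - \frac12}$. Combining the two bounds through
\begin{align*}
 \big|Y(t_n) - Y^{\mathfrak{s},l}_n\big| \le \big|Y(t_n) - Y^h(t_n)\big| + \big|Y^h(t_n) - Y^{\mathfrak{s},l}_n\big|,
\end{align*}
and using once more $p > 2$ (so that $\tfrac2p - \tfrac12 \ge \tfrac3p - 1$ and $\tfrac2p - \tfrac12 \le \tfrac1p$), we obtain $|Y(t_n) - Y^{\mathfrak{s},2}_n| \le C_2 h^{3/p - 1}$ and $|Y(t_n) - Y^{\mathfrak{s},3}_n| \le C_3 h^{2/p - 1/2}$ for all $h < \delta$, with $\delta$, $C_2$, $C_3$ a.s.\ finite since they depend only on $C_0$, $M$, $p$, $T$, $b$ and $\sigma$.

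The one genuinely delicate point is the Wong--Zakai rate $h^{2/p - 1/2}$. On a grid interval the pathwise discrepancy $\mathbb{X}^{2,h} - \mathbb{X}$ equals minus the sum of the cell Lévy areas $\mathbb{X}_{t_r,t_{r+1}} - \tfrac12 x_{t_r,t_{r+1}}^{\otimes 2}$, and estimating this sum term by term only produces the exponent $\tfrac2p - 1$, which is too weak and even fails to be positive as $p \uparrow 3$. The extra $\tfrac12$ reflects the probabilistic cancellation between these centred Lévy-area increments, and exploiting it rigorously requires the variation-regularity estimates for the covariance of $X$, i.e.\ the Gaussian rough path machinery underlying Theorem~\ref{thm:existence_Gaussian}; this is precisely where the work sits (cf.\ the remark following Proposition~\ref{prop:1step_estimate}, which already notes that the rate is dictated by the Wong--Zakai approximation).
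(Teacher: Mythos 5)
Your proposal is correct and follows essentially the same route as the paper: identifying the simplified schemes with the full schemes driven by the canonical lift of the piecewise-linear interpolation $X^h$, splitting the error into the discretization error (controlled by Theorem~\ref{thm:conv_rates}, made uniform in $h$ via \eqref{eqn:unif_bound_flow} and localization to bounded coefficients) and the Wong--Zakai error of rate $h^{2/p-1/2}$ (which the paper likewise delegates to the Gaussian rough path results of Friz--Riedel), then taking the minimum of the exponents using $p>2$. The point you flag as delicate --- that the Wong--Zakai rate $\tfrac{2}{p}-\tfrac12$ genuinely requires the covariance-variation estimates rather than a cellwise bound --- is exactly where the paper also outsources the work.
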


\begin{remark}
 The assumption \eqref{eqn:unif_bound_flow} is very natural in the context of rough differential equations. It is satisfied, for instance, if the vector fields are bounded \cite[Proposition 8.3]{FH14} or if $\sigma \in \mathcal{C}^{\infty}_b$ and $b$ satisfies \eqref{eqn:normal_growth} \cite{RS17}.
\end{remark}

\begin{proof}[Proof of Theorem \ref{thm:rates_Gaussian_rough}]

  The idea of the proof is from \cite{DNT12} and was also used in \cite{FR14}.
 First, it is easily seen that the schemes defined
 in \eqref{eqn:implicit_Milstein_simple} resp.
 \eqref{eqn:implicit_Milstein_order3_simple} coincide with the ones defined in
 \eqref{eqn:implicit_Milstein} resp. \eqref{eqn:implicit_Milstein_order3} when
 $\mathbf{X}$ is replaced by the canonical lift of the process $X^{h}$ which is defined as the piecewise linear approximation of
 $X$ at the points given by $\mathcal{T}^h$. Thus,
 \begin{align}\label{eqn:wong_zakai_decomp}
  \max_{n \in \{0,\ldots,N_h\}} \big|Y(t_n) - Y^{\mathfrak{s},l}_n\big| \leq \sup_{t \in [0,T]} |Y(t) - Y^h(t)| + \max_{n \in \{0,\ldots,N_h\}} |Y^h(t_n) - Y^{h;l}_n|
 \end{align}
 where $Y^h$ is the solution to
 \begin{align*}
  \diff Y^h = b(Y^h) \diff t + \sigma(Y^h) \diff X^h(\omega); \quad Y_0 = \xi
 \end{align*}
 and $(Y^{h;l}_n)$, $l = 1,2$, is defined as in Definition \ref{definition:impl_schemes} when the rough path is the canonical lift of $X^h$. With Theorem \ref{thm:conv_rates}, we can give an estimate for the second term on the right hand side of the inequality \eqref{eqn:wong_zakai_decomp} and obtain the rates $\frac{3}{p} - 1$ resp. $\frac{1}{p}$. These estimates are indeed uniform due to assumption \eqref{eqn:unif_bound_flow}. The first term in \eqref{eqn:wong_zakai_decomp} is the Wong-Zakai error. In the case of bounded vector fields, the solution map of a rough differential equation is locally Lipschitz continuous in the rough path topology \cite[Theorem 8.5]{FH14}. Using a localization argument as in Theorem \ref{thm:conv_rates} together with assumption \eqref{eqn:unif_bound_flow}, we may assume that the map is locally Lipschitz continuous also in our case. We can thus apply the results in \cite{FR14} to obtain a rate of $\frac{2}{p} - \frac{1}{2}$ for the Wong-Zakai approximation. Since $\frac{3}{p} - 1 < \frac{2}{p} - \frac{1}{p}$ and $\frac{2}{p} - \frac{1}{2} < \frac{1}{p}$ for $p > 2$, the claim follows.

\end{proof}

\section{Numerical experiments}
\label{sec:numexp}

In this section we perform several numerical experiments with the numerical
methods discussed in this paper. In our examples we focus on rough differential
equations where the driver is generated by a fractional Brownian motion.

\begin{example}
  In the following we investigate the performance of the implicit Euler method
  \eqref{eqn:RN_y} applied to a scalar rough differential equation driven by an
  additive fractional Brownian motion with different regularities. 
  To be more precise, we consider
  \begin{align}
    \label{eq:RDEexample1}
    \begin{split}
      \begin{cases}
        \diff y(t) = (y(t)-y^3(t)) \diff{t} + \diff{B^H(t)}, \quad t \in (0,1],\\
        y(0) = -3.0,
      \end{cases}
    \end{split}
  \end{align}
  where $B^H$ is a real-valued fractional Brownian motion with Hurst parameter
  $H\in (0,1)$. Note that the function $b \colon \R \to \R$ defined by
  $b(y):=y-y^3$ for $y \in \R$ satisfies a one-sided
  Lipschitz condition with constant $C_b = 1$. In particular,
  Assumptions~\ref{as:drift} and \ref{as:drift2} are satisfied. 
  In the experiment, we choose $H$ to be $0.75$, $0.5$, $0.25$ and $0.10$ respectively.
  For the simulation of the numerical scheme \eqref{eqn:RN_y} 
  we consider the step sizes $h\in \{2^{-7},2^{-8}, 2^{-9},2^{-10},2^{-11},2^{-12}\}$ 
  and compare them to a reference solution obtained via a finer step size of
  $h_{\mathrm{ref}} = 2^{-14}$.   

  The fractional Brownian motion, as a Gaussian noise, is fully characterized
  by its mean and covariance function. For our numerical experiment we first
  simulate a path of the fractional Brownian motion on the time grid
  \begin{align*}
    \mathcal{T}^{h_{\mathrm{ref}}}
    = \{t_0=0<t_1<\ldots<t_j<\ldots <t_{N_{h_{\mathrm{ref}}}}
    =1\}\ \mbox{with\ } t_j=j h_{\mathrm{ref}},
  \end{align*}
  i.e., with the reference step size $h_{\mathrm{ref}}$.
  Since the increments of a fractional Brownian motion are, in general, not
  mutually independent we generate the full path at once. To this end
  we first compute the
  $N_{h_{\mathrm{ref}}}\times N_{h_{\mathrm{ref}}}$-dimensional
  covariance matrix $C^H$ whose $(i,j)$-th entry is defined by
  \begin{align*}
    C^H_{(i,j)}
    =\mathbb{E}[B^H(t_i)B^H(t_j)]=\frac{1}{2}\big(|t_i|^{2H}+ 
    |t_j|^{2H}-|t_i-t_j|^{2H}\big),\ \mbox{for\
    }i,j\in\{1,\ldots,N_{h_{\mathrm{ref}}}\}. 
  \end{align*}
  Note that the covariance matrix $C^H$ is positive definite and symmetric. 
  Thus, by an application of the Cholesky decomposition 
  we obtain a lower-triangular matrix $L \in \R^{N_{h_{\mathrm{ref}}}\times
  N_{h_{\mathrm{ref}}}}$ with $L L^{\top} = C^H$. 
  Then we draw from the distribution of a sample path of the fractional
  Brownian motion by taking note of
  \begin{align*}
    \big(B^H(t_1), \ldots, B^H(t_{N_{h_{\mathrm{ref}}}})\big)^{\top}
    \sim L V,
  \end{align*}
  where $V = (V_1,\ldots,V_{N_{h_{\mathrm{ref}}}})^{\top}$
  is an $N_{h_{\mathrm{ref}}}$-dimensional standard normally distributed
  vector. For the simulation with larger step sizes we simply restrict 
  the generated trajectory of $B^H$ to the coarser time grid.

  Once the trajectory of the fractional Brownian motion is simulated we can
  directly implement the implicit Euler method \eqref{eqn:RN_y} for the
  approximation of the rough differential equation \eqref{eq:RDEexample1}. In
  each step of the method we have to solve a nonlinear equation. In our
  experiment we accomplished this by an application of Newton's method.
  
  In Figure~\ref{fig1} we show the experimental pathwise errors for the
  different values of the Hurst parameter $H$. The plots show the 
  errors against the underlying step size, i.e., the number $n$
  on the $x$-axis indicates the corresponding simulation is based on the step 
  size $h = 2^{-n}$.  

  First, we observe that all four curves become seemingly less smoother when
  the value of the Hurst parameter decreases. This is expected from the
  decreasing smoothness of the driving path $B^H$.
  In order to compare the convergence behaviour of the implicit Euler method
  \eqref{eqn:RN_y} in our experiments with the theoretical
  result in Theorem~\ref{thm:RDE_y} recall that the path of a fractional
  Brownian motion is $\alpha$-H\"older continuous for every $\alpha \in (0,H)$.
  Thus, the theoretical order of convergence obtained in 
  Theorem~\ref{thm:RDE_y} is essentially equal to $H$. 
  The respective theoretical orders of convergence are indicated by
  the order lines in each plot in Figure~\ref{fig1}. Comparing this with the
  actually observed errors in our experiment
  we conclude that the performance of the implicit
  Euler method is apparently better in this example 
  than predicted by Theorem~\ref{thm:RDE_y}. We also mention that, although
  Figure~\ref{fig1} only shows the result for just one particular sample path,
  one essentially obtains the same qualitative behavior of the numerical error
  for other typical paths of the driving fractional Brownian motion.
  
  \begin{figure}[t]
    \centering
    \subfigure[a][$H=0.75$]{\includegraphics[width=0.4\textwidth]{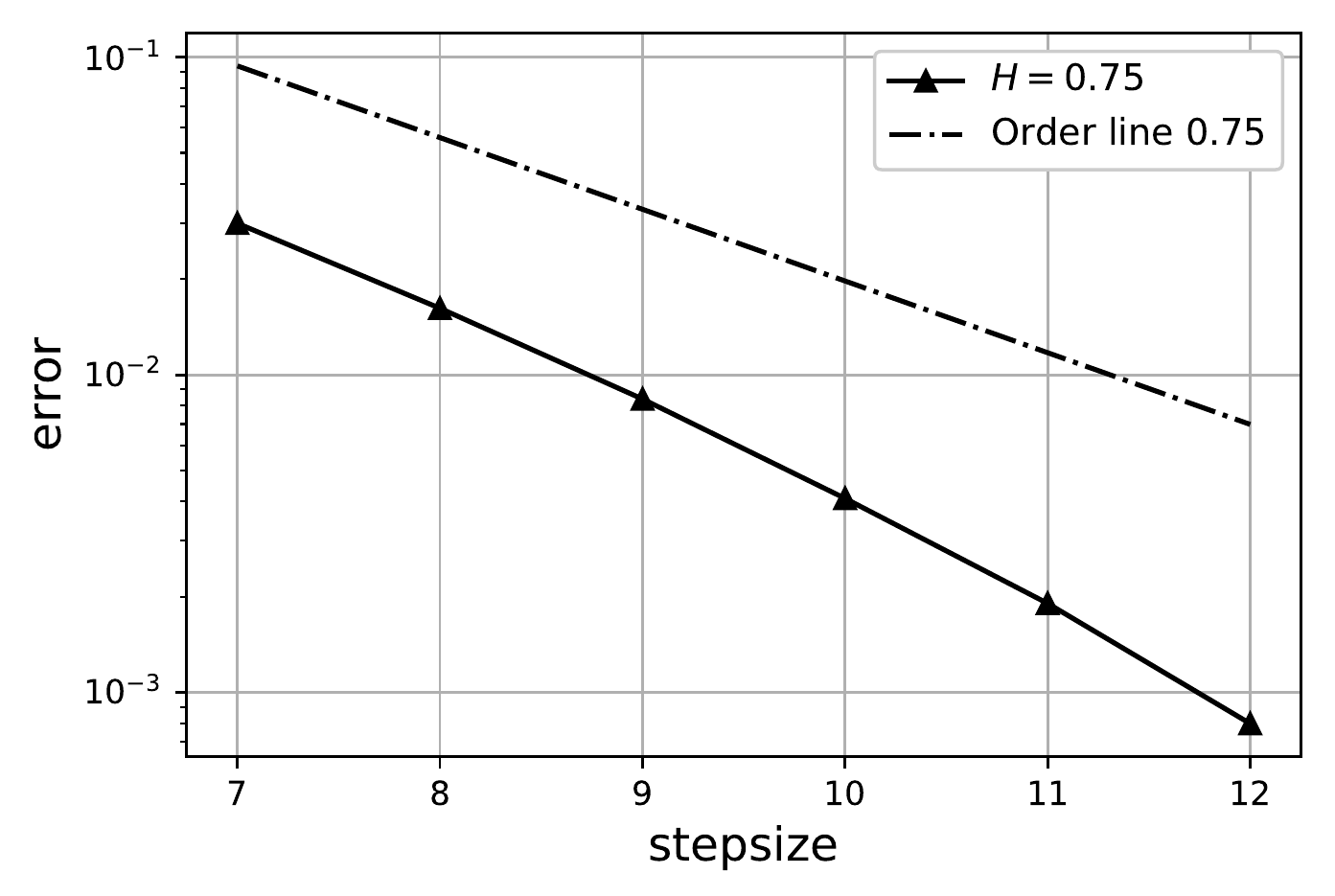}}
    \qquad
    \subfigure[b][$H=0.50$]{\includegraphics[width=0.4\textwidth]{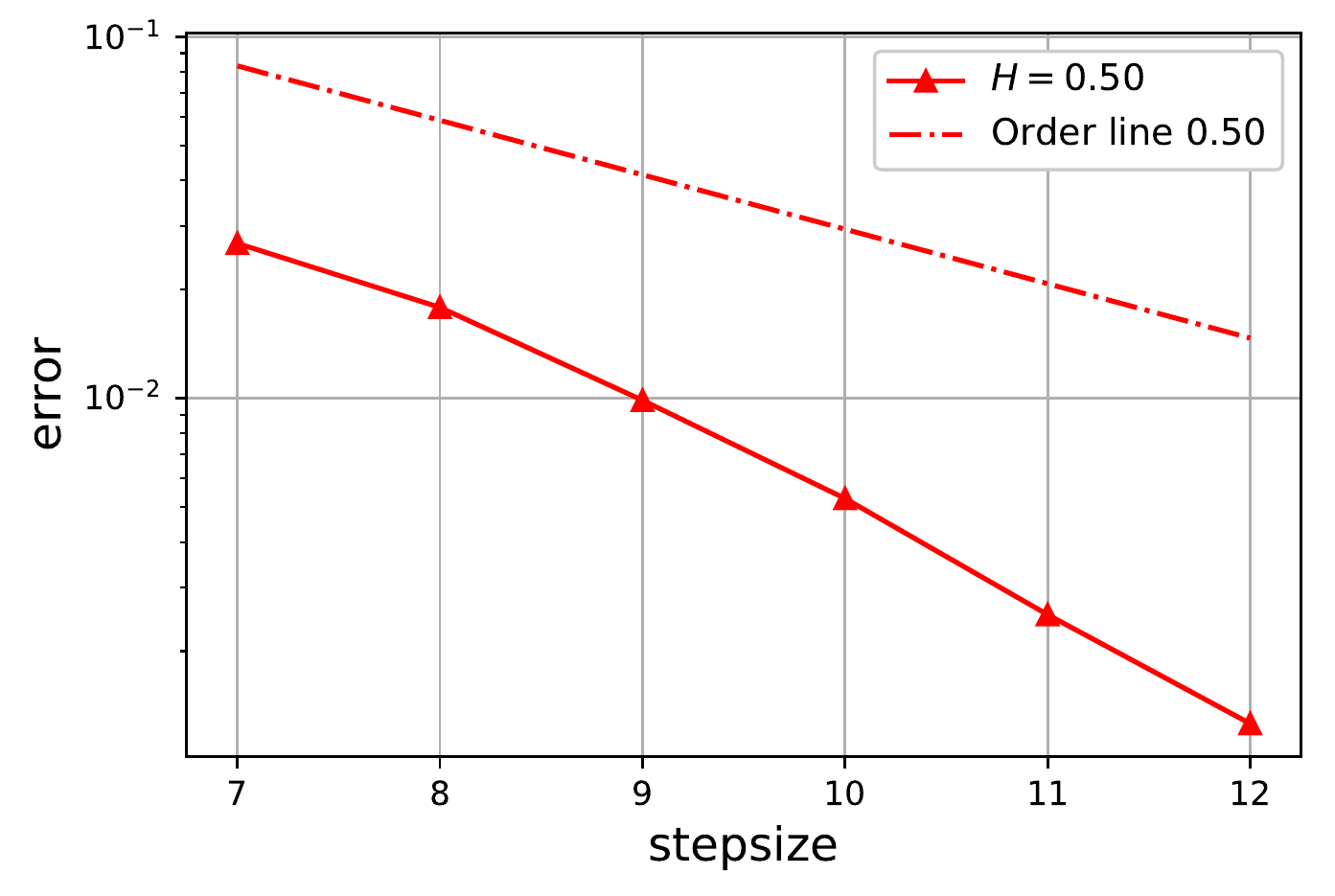}}\\
    \subfigure[c][$H=0.25$]{\includegraphics[width=0.4\textwidth]{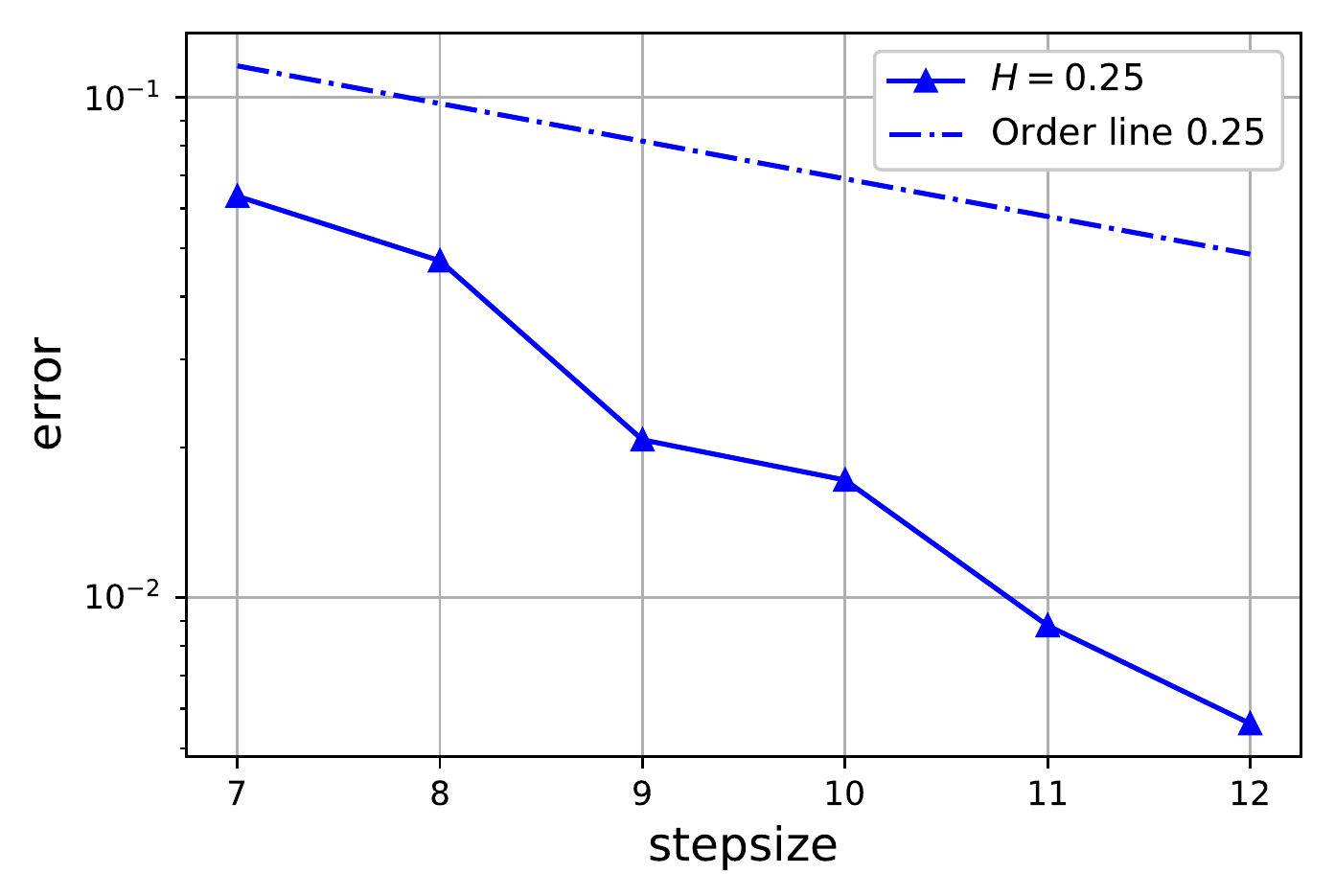}}
    \qquad
    \subfigure[d][$H=0.10$]{\includegraphics[width=0.4\textwidth]{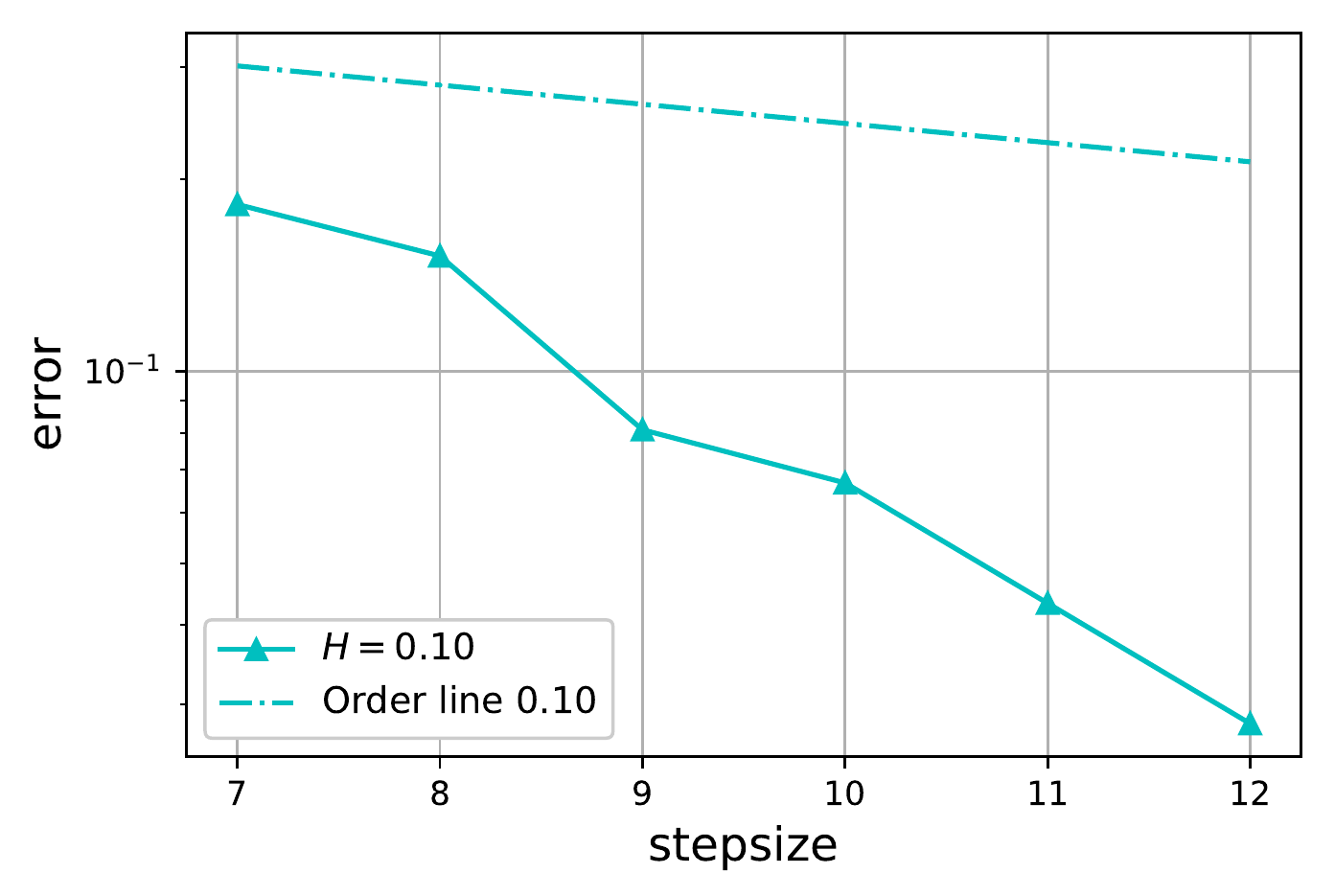}}
    \caption{Numerical experiments for the implicit Euler method for the RDE
    \eqref{eq:RDEexample1} with $H \in \{\frac{1}{10}, \frac{1}{4}, \frac{1}{2},
    \frac{3}{4}\}$: step sizes versus pathwise errors.}  
    \label{fig1}
  \end{figure}

  \begin{table}[h]
    \caption{   \label{tab:randGFEM_L2_err}
    Numerical values of the pathwise errors and experimental order of
    convergence (EOC) for implicit Euler method for the RDE
    \eqref{eq:RDEexample1} simulations.}  
    \begin{tabular}{p{1.2cm}|p{1.5cm}p{1.0cm}|p{1.5cm}p{0.9cm}|p{1.5cm}p{0.9cm}|p{1.5cm}p{0.9cm}}
     & $H=0.75$ &     & $H=0.50$   &  & $H=0.25$ & &  $H=0.10$ &  \\ 
     \noalign{\smallskip}\hline\noalign{\smallskip}
     $h$     & error   & EOC     & error & EOC     & error & EOC& error & EOC \\ 
     \noalign{\smallskip}\hline\noalign{\smallskip}
       0.007813  & 0.029995  & & 0.026809&  & 0.063482 &   & 0.182718&\\ 
       0.003906  & 0.016201 & 0.88 & 0.017836 & 0.60  & 0.047149 & 0.43 & 0.151680&0.27\\ 
      0.001953  & 0.008391  & 0.95 & 0.009873 & 0.85  & 0.020681 & 1.18 & 0.080940&0.90\\ 
      0.000977  & 0.004081   & 1.04 & 0.005284 & 0.90  & 0.017187& 0.27& 0.066802 &0.28\\ 
      0.000488  & 0.001907   & 1.10 & 0.002523 & 1.06  & 0.008794& 0.97&0.043243 &0.63\\ 
      0.000244  & 0.000798   & 1.25 & 0.001261 & 1.00  & 0.005601& 0.65& 0.027993&0.63\\ 
        \noalign{\smallskip}\hline\noalign{\smallskip}
       Average  & & 1.04 &  &0.88  &  &0.70  & &0.54\\
    \end{tabular}
  \end{table}

  Table~\ref{tab:randGFEM_L2_err} contains the numerical values of the computed
  errors displayed in Figure~\ref{fig1}. In addition, we computed the
  corresponding experimental orders of convergence defined by 
  \begin{equation*}
    \mbox{EOC} = \dfrac{ \log(\mbox{error}(2^{-i})) -
    \log(\mbox{error}(2^{-i+1})) }{ \log(2^{-i})-\log(2^{-i+1})}  
  \end{equation*}
  for $i\in \{7,8,9,10,11,12\}$, where the term $\mbox{error}(2^{-i})$ denotes
  the error with step size $2^{-i}$. Although the experimental orders of 
  convergence are better then predicted by Theorem~\ref{thm:RDE_y}
  the absolute values of the errors are visibly influenced by the
  Hurst parameter.

  Finally, let us remark that for $H = 0.5$ (standard Brownian motion)
  the order of convergence observed in our experiment is close to $1$. 
  This is in line with standard results for stochastic differential equations 
  with additive noise since in this case the implicit Euler method coincides
  with a Milstein-type method. 
  Regarding the optimality of the convergence rates we also refer to
  the discussion in Remark \ref{rmk:optimal}.  
\end{example}

\begin{example} 
  In the second example we consider the following scalar rough
  differential equation driven by an additive fractional Brownian motion $B^H$
  with Hurst parameter $H=0.75$,  
  \begin{align}
    \label{eq:RDEexample3}
    \begin{split}
      \begin{cases}
        \diff{y(t)} = -70y(t)\diff{t} + \diff{B^H(t)}, \quad t \in (0,1],\\
        y(0) = 2.7.
      \end{cases}
    \end{split}
  \end{align}
  It can be verified that $b\colon \R \to \R$ defined by $\R \ni z\mapsto -70z
  \in \R$ is a one-sided Lipschitz function with constant $-70$, while it
  is globally Lipschitz continuous with constant $70$. This discrepancy 
  renders the problem \emph{stiff}. This usually has the effect that the
  implicit Euler method has a much less restrictive upper step size bound
  compared to its explicit counterpart. For a more formal introduction of
  \emph{stiffness} for numerical methods we refer to \cite{HW96}.

  We can easily illustrate the difference in the stability behavior between the
  explicit and the implicit Euler method in light of the equation
  \eqref{eq:RDEexample3}. First observe that the upper step size limits for the
  implicit Euler method \eqref{eqn:RN_y} in Theorem~\ref{thm:well-posedness}
  and Theorem~\ref{thm:RDE_y2} are void for every
  non-positive one-sided Lipschitz constants, since $C_b h < 1$ holds then true for
  any step size $h = \frac{T}{N_h}$.  

  Next, let us recall that explicit Euler method is given by
  \begin{align}
    \label{eqn:RN_explicit}
    \begin{split}
      y^e_{j+1}&=y^e_{j}+hb(y^e_{j})+B^H_{j+1}-B^H_{{j}}\mbox{\ \ for\ } j\in
      \{0,\cdots,N_h-1\}, 
    \end{split}
  \end{align}
  with $y^e_0=2.7$. Observe that the drift function $b$ in \eqref{eq:RDEexample3}
  gives a strong push towards the origin. However, this behavior is only
  reproduced by the explicit Euler method if the step size is sufficiently
  small. To be more precise, 
  the one-step map of the explicit Euler method is estimated by
  \begin{align*}
    \begin{split}
      &\big|y_{j+1}^e\big| \le \big|y^e_{j}+h b(y^e_{j}) \big|
      + \big|B^H_{j+1} - B^H_{{j}}\big| 
    \end{split}
  \end{align*}
  Thus, the drift part of the explicit Euler method is a contraction if and
  only if
  \begin{align}
    \label{eqn:1dmap}
    \big| 1 + h b'(y_s) \big| = |1-h 70| \leq 1.
  \end{align}
  Compare further with the linear asymptotic stability of dynamical systems in
  discrete time, for instance, in \cite[Chapter 10]{S18}.
  One easily verifies that \eqref{eqn:1dmap} leads to the step size bound
  $h \le h_o := \frac{1}{35} < 2^{-5}$. If this bound is violated then the drift
  part of the explicit Euler method is too negative and typical trajectories of the
  explicit method start to oscillate. 
  
  \begin{figure}[t]
    \centering
    \subfigure[a][Implicit scheme]{
      \includegraphics[width=0.45\textwidth]{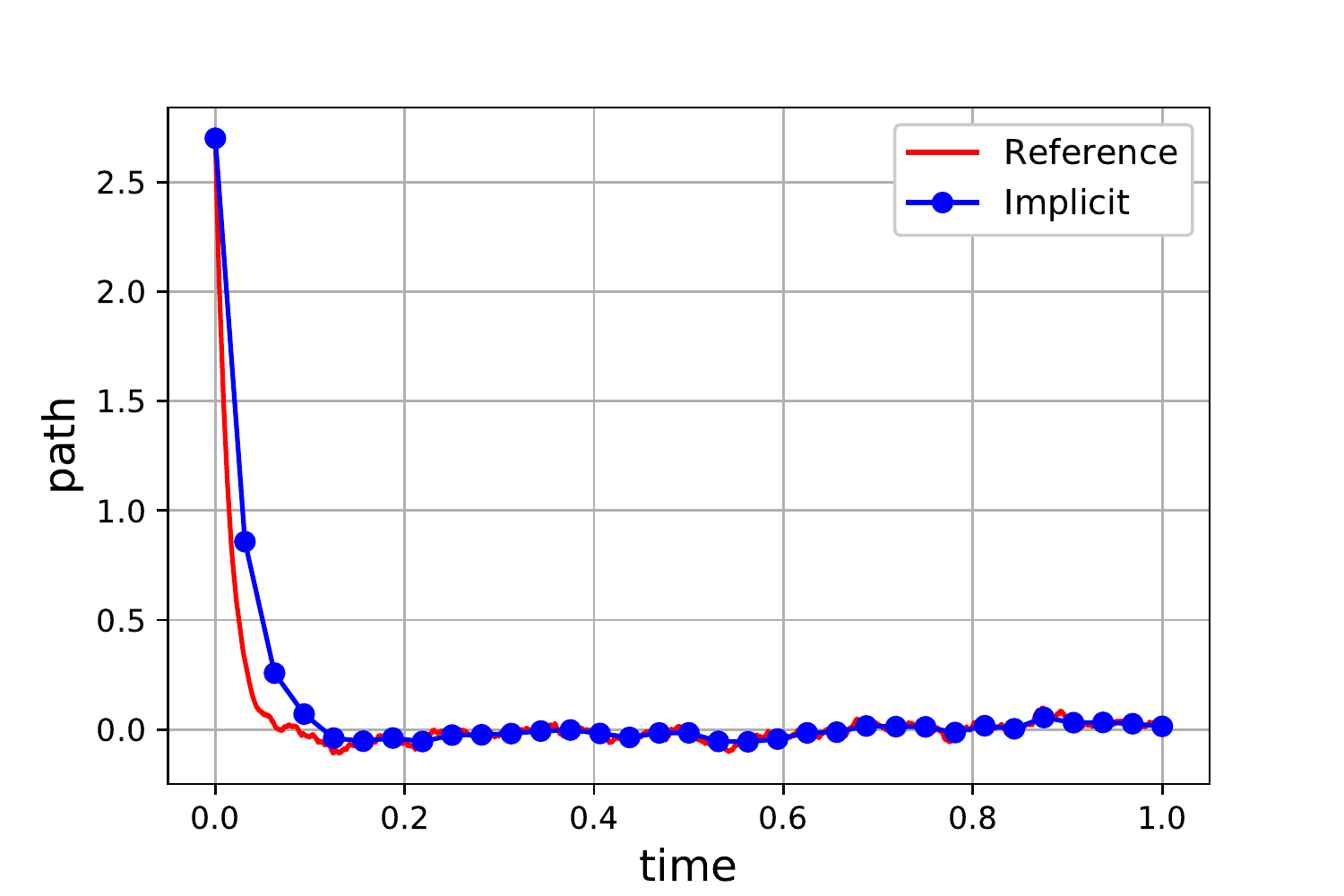}
    }
    \qquad
    \subfigure[b][Explicit scheme]{
      \includegraphics[width=0.45\textwidth]{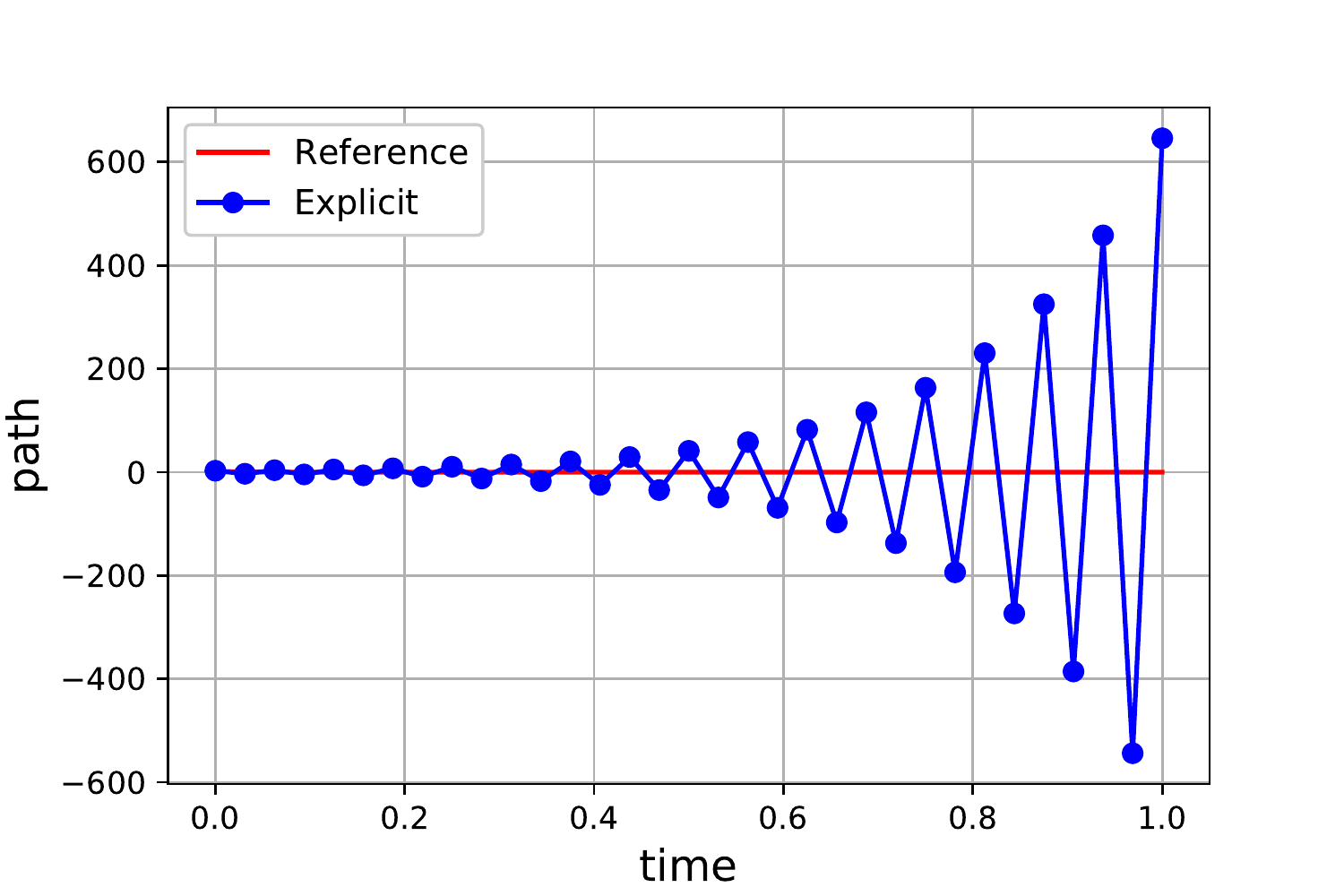}
    }\\
    \caption{Comparison between implicit and explicit schemes of RDE
    \eqref{eq:RDEexample3}.} 
    \label{fig8}
  \end{figure}

  We illustrate this behavior in Figure~\ref{fig8}, where we plot 
  a trajectory with both schemes, the explicit and the implicit Euler methods.
  Both subfigures (a) and (b) show a reference solution of
  \eqref{eq:RDEexample3} with step size $h_{\text{ref}}=2^{-14}$ 
  and a further trajectory for each method with the coarser step size $h =
  2^{-5}$. Observe that this step size violates the condition
  \eqref{eqn:1dmap} for the explicit Euler method.

  In part (a) we see that the implicit Euler method already gives
  a rather good approximation of the reference solution for this step size.
  On the other hand, we observe in part (b) that the trajectory of the explicit
  method exhibits strong oscillations which are purely artificially induced
  and, therefore, undesirable. 

  In particular, this observation is of particular importance
  if the numerical method is embedded, for instance, in a multilevel Monte
  Carlo algorithm. Here the effectiveness of the multilevel algorithm depends
  on the availability of a one-step method which also behaves stable for rather
  coarse step sizes. For an analysis of the multilevel Monte Carlo algorithm in
  the context of rough differential equations we refer to \cite{BFRS16}.

\end{example}

\begin{example}
We consider the following 2-dim RDE driven by 2d fractional Brownian motion
\begin{align}
  \label{eq:RDEexample4}
  \begin{split}
    \begin{cases}
      \diff{y} = (y-|y|^2y)\diff{y} +
      \sigma_1(y)\diff{B^1_{H_1}}+\sigma_2(y)\diff{B^2_{H_2}}, \quad t \in (0,1],\\
      y(0) =[10.0,-10.0]^T,
    \end{cases}
  \end{split}
\end{align}
where $[B^{H_1}, B^{H_2}]^T$ is a 2-dim fractional Brownian motion with Hurst parameter $H_1, H_2\in (0,1)$ in each direction, and $\sigma_1(y)=[\cos(y_2),-0.9-10\cos(y_1)]^T$ and $\sigma_2(y)=[\cos(|y|),0]^T$. From \cite[Theorem 4.3]{RS17}, it follows that the equation defines a semiflow and that the assumptions of Theorem \ref{thm:rates_Gaussian_rough} are satisfied.

We are free to choose the Hurst parameter $H\in (\frac{1}{3},\frac{1}{2}]$. We pick $H_1=H_2=\frac{5}{12}$ in the experiment. We then simulate the solutions via 
the implicit Milstein scheme with stepsizes $h\in \{2^{-7},2^{-8}, 2^{-9},2^{-10},2^{-11},2^{-12}\}$ and compare them to the reference solution obtained via a finer step size of $h_{\mathrm{ref}} = 2^{-14}$.

In Figure~\ref{fig6} (a), we consider the RDE \eqref{eq:RDEexample4} and plot the 
pathwise errors against the underlying step size, i.e., the number $n$
on the $x$-axis indicates the corresponding simulation is based on the step
size $h = 2^{-n}$. The finest step size here is $2^{-12}$. In addition, average EOC obtained is $0.38$, is larger compared to the expected order of convergence $\frac{3}{q}-1<\frac{3\times 5}{12}-1=0.25$ from Theorem \ref{thm:rates_Gaussian_rough}.
In Figure \ref{fig6} (b), two path simulations is also demonstrated for the performance of explicit and implicit Milstein scheme with stepsize $h=2^{-7}$. Clearly numerical solutions from the implicit scheme gives a better approximation. 

In addition, we also test the corresponding forward and backward Euler scheme of RDE \eqref{eq:RDEexample4} with even coarser stepsize $h_o=2^{-6}$. Forward Euler scheme returns an overflow error, which indicates an explosion of the solution of forward Euler scheme. Though eventually both forward and backward Euler schemes may give the same order of convergence, backward Euler outperforms backward one with coarser stepsizes. This observation is particularly crucial for easing computational burden of large-scaled simulations and thus has practical impact on computational cost.

\begin{figure}
\centering
\subfigure[a][Pathwise errors]{\includegraphics[width=0.45\textwidth]{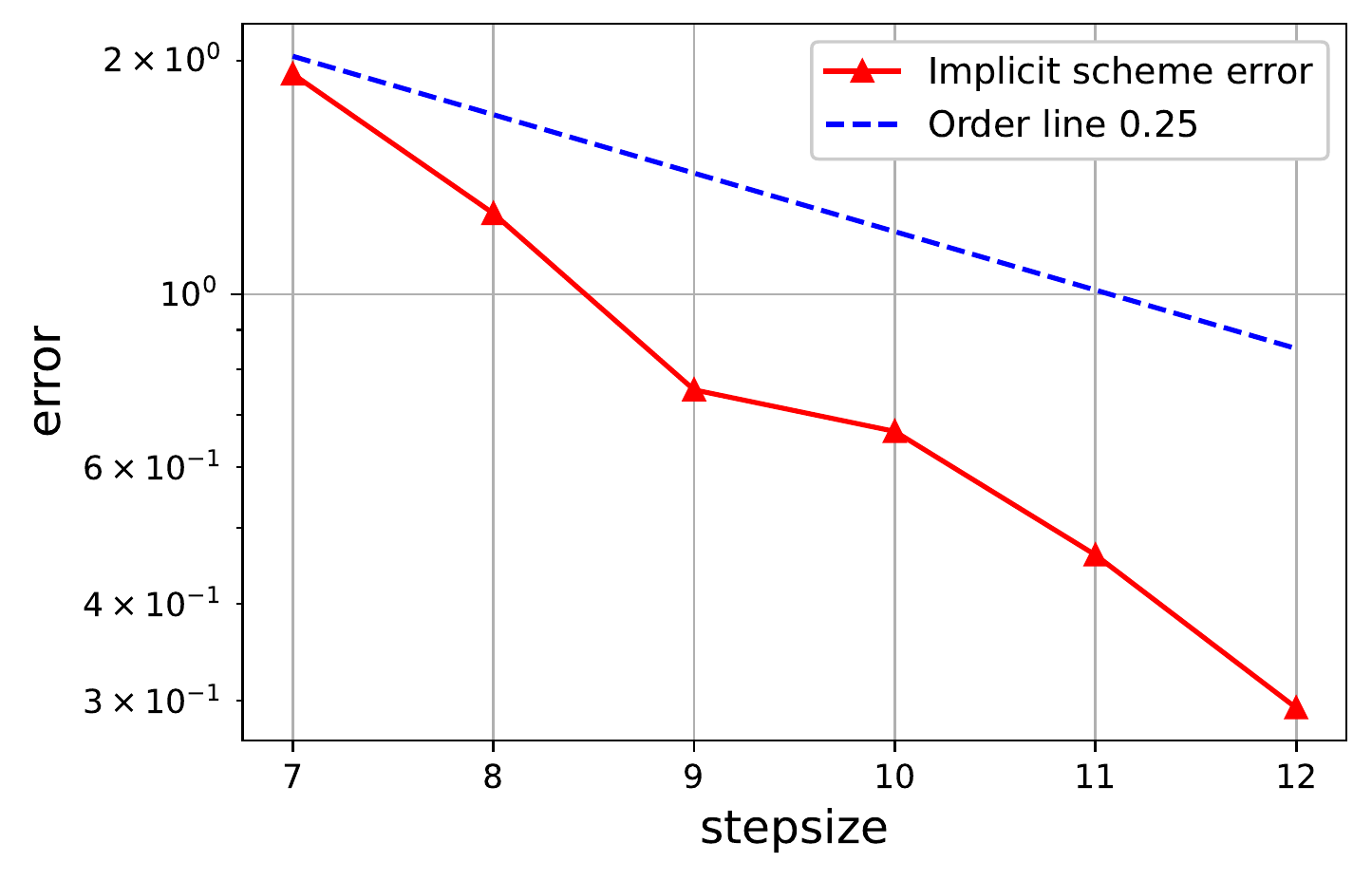}}\qquad
\subfigure[b][Path simulation with $h=2^{-7}$]{\includegraphics[width=0.5\textwidth]{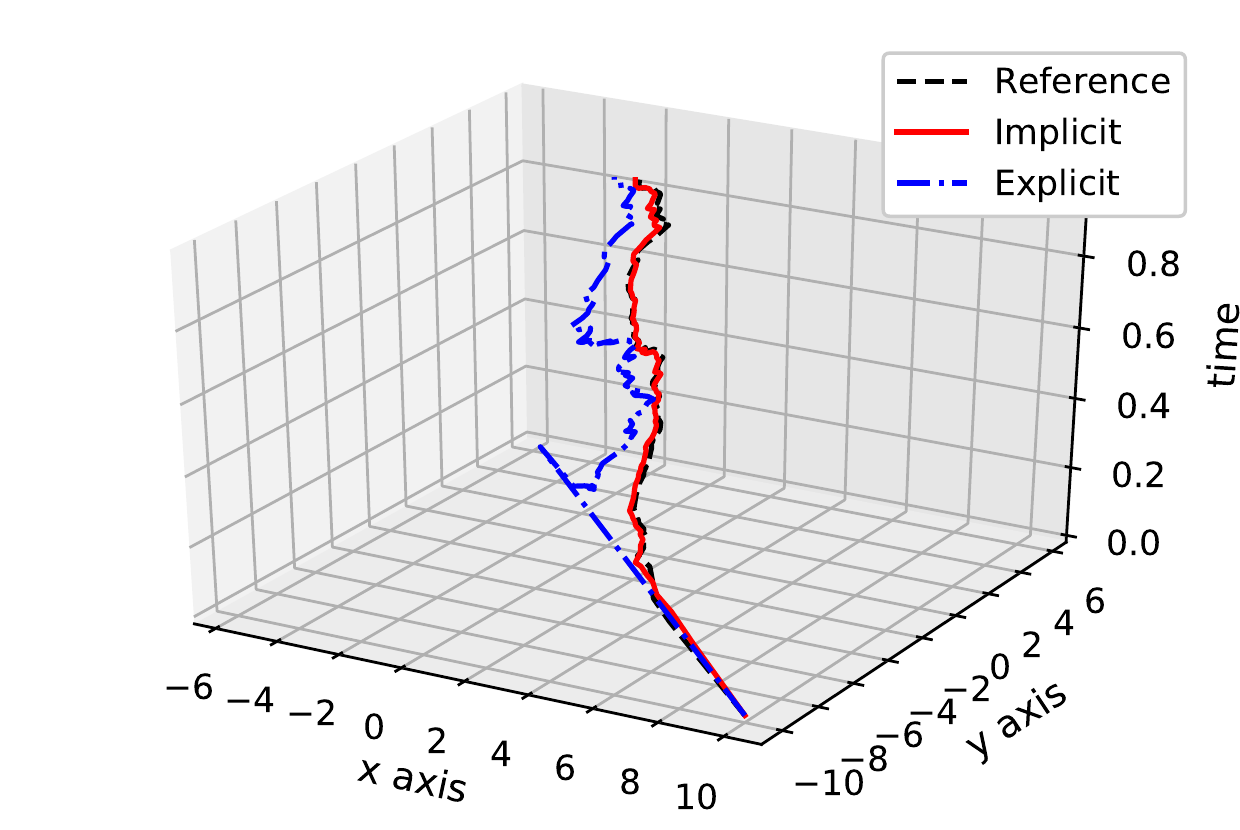}}\\
\caption{Numerical experiment for 2-dim RDE \eqref{eq:RDEexample4}.}
\label{fig6}
\end{figure}
\end{example}

\subsection*{Acknowledgements}
\label{sec:acknowledgements}

SR is supported by the MATH+ project AA4-2 \textit{Optimal control in energy markets using rough analysis and deep networks}. YW would acknowledge Alan Turing Institute for funding this work through EPSRCgrant EP/N510129/1 and EPSRC through the project EP/S2026347/1, titled \textit{Unparameterised multi-modal data, high order signature, and the mathematics of data science}. Work on this paper was started while SR and YW were supported by the DFG via Research Unit FOR 2402.

\bibliographystyle{alpha}
\bibliography{numerics_stiff_RDEs}

\end{document}